\theoremstyle{plain}
\newtheorem{theorem}{Theorem}[section]
\newtheorem{lemma}[theorem]{Lemma}
\newtheorem*{THMMAIN}{Theorem \ref{thm:mainalg}}
\newtheorem*{CORMAIN}{Corollary \ref{cor:matroidpathwidth}}
\newtheorem{corollary}[theorem]{Corollary}
\newtheorem{proposition}[theorem]{Proposition}
\newtheorem{fact}[theorem]{Fact}
\theoremstyle{remark}
\newtheorem*{remark}{Remark}
\theoremstyle{definition}
\newcommand\abs[1]{\lvert #1\rvert}
\newcommand\limb{\mathcal{L}}
\newcommand\limbhat{\mathcal{LG}}
\newcommand\limbtil{\mathcal{LC}}
\newcommand\rank{\operatorname{rank}}
\newcommand\rw{\operatorname{rw}}
\newcommand\lrw{\operatorname{lrw}}
\newcommand\cutrk{\operatorname{cutrk}}
\def\restrict#1#2{\mathchoice
              {\setbox1\hbox{${\displaystyle #1}_{\scriptstyle #2}$}
              \restrictionaux{#1}{#2}}
              {\setbox1\hbox{${\textstyle #1}_{\scriptstyle #2}$}
              \restrictionaux{#1}{#2}}
              {\setbox1\hbox{${\scriptstyle #1}_{\scriptscriptstyle #2}$}
              \restrictionaux{#1}{#2}}
              {\setbox1\hbox{${\scriptscriptstyle #1}_{\scriptscriptstyle #2}$}
              \restrictionaux{#1}{#2}}}
\def\restrictionaux#1#2{{#1\,\smash{\vrule height .8\ht1 depth .85\dp1}}_{\,#2}}
\newcommand\cO{\mathcal{O}}
\newcommand\bN{\mathbb{N}}
\newcommand\cI{\mathcal{I}}
\newcommand{\macro}[3]{\newcommand{#1}[#3]{#2}}
\macro{\lbwd}{\operatorname{pw}(#1)}{1}
\macro{\bwd}{\operatorname{bw}(#1)}{1}
\newcommand\push{\operatorname{push}}
\newcommand\pull{\operatorname{pull}}
\newcommand\maxnum{\eta}
\newcommand{\bag}[2]{\textsf{b}_{#1}(#2)}
\newcommand{\pbag}[2]{\textsf{pb}_{#1}(#2)}
\newcommand{\origin}[1]{\mathcal{G}[#1]}
\def\ie{\emph{i.e.}\xspace}
\def\cmp{\mathcal{T}}
\def\cT{\mathcal{T}}
\begin{document}

\title[LRW of  DH graphs I: A polynomial-time Algorithm]{Linear rank-width of distance-hereditary graphs I. A polynomial-time algorithm}

\author[I. Adler]{Isolde Adler}
\address{Institut f\"{u}r Informatik, Goethe-Universit\"{a}t, Frankfurt, Germany.}
\author[M.M. Kant\'e]{Mamadou Moustapha Kant\'e}
\address{Clermont-Universit\'e, Universit\'e Blaise Pascal, LIMOS, CNRS, France.}
\author[O. Kwon]{O-joung Kwon}
\address{Institute for Computer Science and Control, Hungarian Academy of Sciences, Hungary.}
  
  \email{iadler@cs.uni-frankfurt.de}
  \email{mamadou.kante@isima.fr}
  \email{ojoungkwon@gmail.com}
  
  \thanks{The first author is supported by the German Research Council, Project GalA, AD 411/1-1.}
  \thanks{The third author was partially supported by Basic Science Research
  Program through the National Research Foundation of Korea (NRF)
  funded by  the Ministry of Science, ICT \& Future Planning
  (2011-0011653) while the author was at Korea Advanced Institute of Science and Technology, and was also partially supported by ERC Starting Grant PARAMTIGHT (No. 280152).\\
 A preliminary version appeared in the proceedings of WG'14}

\begin{abstract} Linear rank-width is a linearized variation of rank-width, and it is deeply related to matroid path-width. 
 In this paper, we show that the linear rank-width of every $n$-vertex
  distance-hereditary graph, equivalently a graph of rank-width at most $1$, can be computed in time $\cO(n^2\cdot \log_2 n)$, and a linear layout witnessing the linear rank-width can be computed with the same time complexity.  
    As a corollary, we
  show that the path-width of every $n$-element matroid of branch-width at most $2$ can be computed in time $\cO(n^2\cdot \log_2 n)$,
  provided that the matroid is given by an independent set oracle.
    
    To establish this result, we present a characterization of the linear rank-width of distance-hereditary graphs
 in terms of their canonical split decompositions. 
   This characterization is similar to the known characterization of the path-width of forests
  given by Ellis, Sudborough, and Turner [The vertex separation and search number of a graph. {\em Inf. Comput.}, 113(1):50--79, 1994]. 
  However, different from forests, it is non-trivial to relate substructures of the canonical split decomposition of a graph
   with some substructures of the given graph.
   We introduce a notion of `limbs' of canonical split decompositions, which correspond to certain vertex-minors of the original graph, for the right characterization. 
%

 
\end{abstract}

\maketitle

\section{Introduction} 

{\em Rank-width}~\cite{OumS06} is a graph parameter introduced by Oum and Seymour with the goal of efficient approximation of the \emph{clique-width}~\cite{CourcelleO00} of a graph. {\em Linear
  rank-width} can be seen as the linearized variant of rank-width, and it is similar to path-width, 
  which can be seen as the linearized variant of tree-width.  While path-width is a well-studied notion, much
less is known about linear rank-width.  
Vertex-minor is a graph containment relation where rank-width and linear rank-width do not increase
when taking this operation.

Rank-width is related to matroid branch-width,
which has an important role in structural theory on matroids. 
We refer to the series of papers by Geelen, Gerards, and Whittle on the Matroid Minors Project~\cite{GeelenGW2002,GeelenGW2007} and Rota's Conjecture~\cite{GeelenGW2014} for more information on matroid branch-width.
It is known that the matroid branch-width (matroid path-width) of a binary matroid is equal to the rank-width (linear rank-width) of its fundamental graph plus one~\cite{Oum05}.
This equality can be further generalized to matroids over a fixed finite field with the finite field version of rank-width~\cite{Kante2012, KanteR2013}.
Hence new results on (linear) rank-width will immediately yield new
results on matroid branch-width or on matroid path-width.
In this paper, we will derive a complexity result for computing matroid path-width from linear rank-width.

Kashyap~\cite{Kashyap2008} showed that it is NP-hard to compute matroid path-width on binary matroids.
By reducing from matroid path-width, we can show that computing linear rank-width is NP-hard in general. 
Therefore it is natural to ask which graph classes allow for an efficient computation.  Until now, the
only known non-trivial result is for forests~\cite{AdlerK13}. 
Our main result is that distance-hereditary graphs allow a polynomial-time algorithm to compute linear rank-width.
 A graph $G$ is {\em distance-hereditary}, if for every pair of two vertices $u$ and $v$ of $G$, the distance between $u$ and $v$ in any connected
induced subgraph of $G$ containing both $u$ and $v$, is the same as the distance between $u$ and $v$ in $G$.   
Distance-hereditary graphs are exactly graphs of rank-width at most $1$~\cite{Oum05}, and include all forests and cographs. 
\begin{THMMAIN}
The linear rank-width of every $n$-vertex distance-hereditary graph can be computed in time $\cO(n^2\cdot \log_2 n)$. Moreover, 
a linear layout of the graph witnessing the linear rank-width can be computed with the same time complexity.
\end{THMMAIN}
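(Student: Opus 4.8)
The plan is to reduce computing the linear rank-width of a distance-hereditary graph $G$ to a bottom-up dynamic programming over its \emph{canonical split decomposition} $\mathcal{D}$, in the spirit of the Ellis--Sudborough--Turner recursion for the path-width of forests, with the star and complete-graph bags of $\mathcal{D}$ playing the role that tree nodes play for forests. First I would compute $\mathcal{D}$; for distance-hereditary graphs this can be done in linear time, and the decomposition tree has $\cO(n)$ bags, each of which is a star or a complete graph. Since $G$ has rank-width at most $1$, every edge of the decomposition tree carries cut-rank exactly $1$, so only a single bit of interface information crosses any edge — this is what keeps the state space of the dynamic programming small.

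\textbf{A tree characterization of $\lrw$.} The combinatorial core is a characterization, analogous to the forest case, expressing $\lrw(G)$ as the minimum over all rootings and compatible orderings of the tree of a quantity computed by a local rule at each bag. To make sense of such a rule one must understand, for a subtree $T$ of $\mathcal{D}$ hanging off an edge $e$, which graph $T$ ``represents''. This is where \emph{limbs} are needed: the limb at $e$ is obtained from the part of $G$ on one side of $e$ by a controlled local (vertex-minor) modification, chosen so that (i) the limb is a genuine vertex-minor of $G$, hence its $\lrw$ never exceeds $\lrw(G)$ and behaves monotonically along the tree, and (ii) the $\lrw$ of $G$ is recovered by combining the $\lrw$-values of the limbs meeting at a bag, according to a rule that depends only on whether the bag is a star or a complete graph. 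I would prove the characterization in both directions: \emph{soundness}, that any linear layout of $G$ restricts to layouts of all the limbs realizing the claimed value, and \emph{completeness}, that optimal layouts of the child limbs can always be assembled into a layout of $G$ of the predicted width.

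\textbf{The algorithm.} Using the characterization, I would attach to each bag a bounded-size label recording, for the limb rooted there, the attainable costs of partial layouts compatible with each possible behaviour of the marker edge; since the cut-rank is $1$ there are only constantly many behaviours, and the attainable costs form an interval-type object storable in $\cO(\log n)$ space. Sweeping the tree bottom-up, each bag combines its children's labels — for complete-graph bags this resembles the forest ``path-width of subtrees glued at a vertex'' computation, and for star bags it is a slightly different but equally local rule — and combining $k$ children costs $\cO(k\log n)$ after a sort, giving $\cO(n\log n)$ for the recursion proper. The overall $\cO(n^2\cdot\log_2 n)$ bound then comes from evaluating limbs, which may be graphs on $\cO(n)$ vertices, at each of the $\cO(n)$ bags. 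Finally, storing at every bag a pointer to the child configuration attaining its optimum lets me retrace the recursion and output an explicit witnessing linear layout of $G$ within the same time bound.

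\textbf{Main obstacle.} The principal difficulty, already flagged in the abstract, is that — unlike for forests — a subtree of the canonical split decomposition is not literally an induced subgraph of $G$; defining the limb so that it is simultaneously a true vertex-minor of $G$ (hence $\lrw$-nonincreasing) \emph{and} admits a clean combination rule at stars and at complete graphs is the delicate part, as is showing the local rule is exactly tight (no slack) in both directions. A secondary but nontrivial point is the bookkeeping needed to keep each label to $\cO(\log n)$ bits and to achieve the stated $\cO(n^2\cdot\log_2 n)$ running time rather than a cruder polynomial.
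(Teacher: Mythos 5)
Your overall plan---canonical split decomposition, limbs defined as vertex-minors, an Ellis--Sudborough--Turner-style characterization, and a bottom-up dynamic program with $\cO(\log n)$-size labels per bag---is exactly the route the paper takes. However, the two points you defer to ``the delicate part'' are where the proof actually lives, and your sketch does not contain the ideas needed to carry them out. The characterization is not a generic ``local rule'': the precise statement (Theorem~\ref{thm:main}) is that $\lrw(G)\le k$ if and only if every bag $B$ has at most two components $T$ of $D\setminus V(B)$ whose limb has linear rank-width $k$, with all other components strictly below $k$. Its completeness direction requires exhibiting a path in the decomposition tree off which all limbs have value at most $k-1$ and concatenating layouts along it, and its soundness direction requires a rank computation showing that three components with limb-value $k$ force width $k+1$; neither follows from ``cut-rank $1$ means one bit of interface,'' and the tightness of the rule depends on the case-specific definition of the limb (deletion, local complementation, or pivot according to the bag type and the position of the marked vertex).

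The more serious gap is that your dynamic program silently assumes a bag's label can be computed from its children's labels. When the characterization detects a unique $k$-critical node, the recursion must continue on a limb of a limb, which is \emph{not} a sub-decomposition of $D$; a priori the values stored at descendant nodes say nothing about this new object. The paper needs Propositions~\ref{prop:order}, \ref{prop:order2} and \ref{prop:main} to show that taking canonical limbs commutes up to local equivalence, so that the stored $\beta$-values remain valid for the whole nested sequence of limbs $D^v_\eta, D^v_{\eta-1},\dots$ encountered at a node $v$; without this, each bag would trigger fresh recursive computations and the claimed running time would not follow. Relatedly, your time accounting does not identify where the $\log_2 n$ factor comes from: it is the length of that nested limb sequence, bounded by $\lrw(G)\le\lfloor\log_2 n\rfloor$, a bound on the linear rank-width of distance-hereditary graphs that must itself be proved (Lemma~\ref{lem:lrwtrivialbound}) and that your proposal uses implicitly for the label sizes without flagging it as an obligation.
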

In contrast, computing the path-width of distance-hereditary graphs is known to be NP-hard~\cite{THHD93}.
 Bodlaender and Kloks~\cite{BodlaenderK96} showed that 
 it is possible to compute the path-width of graphs of bounded tree-width in polynomial time.
The corresponding question for rank-width is still open:
Is there a polynomial-time algorithm to compute
the linear rank-width of graphs of rank-width at most $\ell$ (for fixed $\ell$)?
Since distance-hereditary graphs are exactly the graphs of rank-width at most
$1$, Theorem~\ref{thm:mainalg} is a first step towards an answer of this question.

%

 A direct consequence of Theorem \ref{thm:mainalg} is the possibility to compute the path-width of matroids with
branch-width at most $2$ in polynomial time. 

\begin{CORMAIN}
The path-width of every $n$-element matroid of branch-width at most $2$ can be computed in time $\mathcal{O}(n^2 \cdot \log_2 n)$,
provided that the matroid is given by an independent set oracle. Moreover, 
a linear layout of the matroid witnessing the path-width can be computed with the same time complexity.
\end{CORMAIN}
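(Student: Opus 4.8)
The plan is to reduce to Theorem~\ref{thm:mainalg} through the classical dictionary between binary matroids and their fundamental graphs. Let $M$ be an $n$-element matroid with $\bwd{M}\le 2$, accessed through an independent set oracle. First I would note that $M$ is binary: a routine computation gives $\bwd{U_{2,4}}=3$, branch-width does not increase under taking minors, so $M$ has no $U_{2,4}$-minor, and hence $M$ is binary by Tutte's excluded-minor characterization of binary matroids. Consequently $M$ has a fundamental graph with respect to any basis, and this graph is determined purely by the matroid (through fundamental circuits); in particular we do not need to compute an explicit $\mathrm{GF}(2)$-representation of $M$.

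Next I would extract such a fundamental graph from the oracle. Scanning the ground set once and retaining each element whose addition keeps the current set independent produces a basis $B$ of $M$ with $\cO(n)$ oracle calls. Then I would build the graph $G$ on vertex set $E(M)$ whose edges are exactly the pairs $be$ with $b\in B$, $e\in E(M)\setminus B$ such that $(B\setminus\{b\})\cup\{e\}$ is independent (equivalently, $b$ lies on the fundamental circuit of $e$ relative to $B$); this graph is the fundamental graph of $M$ relative to $B$, and producing it takes $\cO(n^2)$ oracle calls together with $\cO(n^2)$ further time. Because $\bwd{M}=\rw(G)+1$ for binary matroids and the graphs of rank-width at most $1$ are precisely the distance-hereditary graphs~\cite{Oum05}, the hypothesis $\bwd{M}\le 2$ forces $G$ to be a distance-hereditary graph on $n$ vertices.

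I would then apply Theorem~\ref{thm:mainalg} to $G$, obtaining $\lrw(G)$ and a linear layout $\sigma$ of $V(G)$ realizing it in time $\cO(n^2\cdot\log_2 n)$, and finally transfer the outcome back to $M$. The point is that for a binary matroid with fundamental graph $G$ on the same ground set, the matroid connectivity of any set $X$ equals $\cutrk_G(X)+1$; therefore every ordering of $E(M)$ has the same width when read in $M$ (for path-width) and in $G$ (for linear rank-width), up to the additive constant $1$. Hence $\pw(M)=\lrw(G)+1$, and $\sigma$, reinterpreted as an ordering of $E(M)$, is an optimal linear layout for $M$; this is exactly the content of the path-width/linear rank-width equality of~\cite{Oum05}. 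Adding the costs, $\cO(n)+\cO(n^2)+\cO(n^2\cdot\log_2 n)=\cO(n^2\cdot\log_2 n)$, as required. No step presents a genuine difficulty once Theorem~\ref{thm:mainalg} is available; if anything deserves to be called the crux, it is the quantitative bookkeeping — checking that the greedy basis and all fundamental circuits really can be recovered with $\cO(n^2)$ oracle calls, that $G$ is genuinely distance-hereditary (which uses the promised bound $\bwd{M}\le 2$), and that the $+1$ shifts between the matroid parameters and the graph parameters are tracked correctly.
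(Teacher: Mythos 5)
Your proposal is correct and follows essentially the same route as the paper: establish binarity via the absence of a $U_{2,4}$-minor and Tutte's theorem, build the fundamental graph with $\cO(n^2)$ oracle calls from a greedily found basis, observe that $\rw(G)\le 1$ makes $G$ distance-hereditary, apply Theorem~\ref{thm:mainalg}, and transfer the answer back through $\cutrk_G(X)=\lambda_M(X)-1$. The only cosmetic difference is that you verify $\bwd{U_{2,4}}=3$ directly and invoke minor-monotonicity of branch-width, whereas the paper cites Dharmatilake's excluded-minor characterization of branch-width at most $2$.
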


The main ingredient of our algorithm is a new characterization of the linear rank-width of distance-hereditary graphs (Theorem~\ref{thm:main}).  Our characterization makes use of the special structure of
canonical split decompositions~\cite{CunninghamE80} of distance-hereditary graphs.  Roughly, a canonical split decomposition decomposes a distance-hereditary graph in a tree-like fashion into complete graphs
and stars, and our characterization is recursive along the sub-decompositions of the split decomposition.

While a similar idea has been exploited in \cite{AdlerK13,EllisST94,MegiddoHGJP88} for other parameters, here we encounter a new problem. When we take a subgraph of a given split decomposition, the obtained split decomposition may have vertices that do not represent
vertices of the original graph. It is not at all obvious how to deal with these vertices in the recursive step.  We handle this by introducing {\em limbs} of canonical split decompositions, that
correspond to certain vertex-minors of the original graphs, and have the desired properties to allow our characterization.  We think that the notion of limbs may be useful in other contexts, too, and
hopefully, it can be extended to other graph classes and allow for further new efficient algorithms.

 The paper is structured as follows.  Section~\ref{sec:preliminaries} introduces the basic notions, in particular linear rank-width, vertex-minors, and split decompositions. In
 Section~\ref{sec:splitdecs}, we define limbs and its canonical decompositions, called canonical limbs, and show some basic properties. We use them in Section~\ref{sec:dh-lrw} for our characterization of the linear rank-width of distance-hereditary graphs.
 In Section~\ref{sec:lemmaonCL}, we establish essential properties of canonical limbs, which will be used to obtain the running time of our algorithm.
Section~\ref{sec:computing-lrw} presents the $\mathcal{O}(n^2\cdot \log_2 n)$-time algorithm for computing the linear rank-width of distance-hereditary graphs, and 
in Section~\ref{sec:matroid}, we obtain an algorithm for computing the path-width of matroids of branch-width at most $2$ as a corollary.
To obtain the running time, we need the fact that every $n$-vertex distance-hereditary graph $G$ has linear rank-width at most $\log_2 n$.
We prove it in Section~\ref{sec:upperbound}.

\section{Preliminaries}\label{sec:preliminaries}


In this paper, graphs are finite, simple and undirected, unless stated otherwise.  Our graph terminology is standard, see for instance \cite{Diestel05}. Let $G$ be a graph. We denote the vertex set of
$G$ by $V(G)$ and the edge set by $E(G)$. An edge between $x$ and $y$ is written $xy$ (equivalently $yx$).  For $X\subseteq V(G)$, we denote by $G[X]$ the subgraph of $G$ induced
by $X$, and let $G\setminus X:=G[V(G)\setminus X]$. 
For shortcut we write $G\setminus x$ for $G\setminus \{x\}$. For a vertex $x$ of $G$, let $N_G(x)$ be the set of \emph{neighbors} of $x$ in
$G$ and we call $|N_G(x)|$ the \emph{degree} of $x$ in $G$.
An edge $e$ of $G$ is called a \emph{cut-edge} if its removal increases the number of connected components of $G$.

A \emph{tree} is a connected acyclic graph. 
A \emph{leaf} of a tree is a vertex of degree one.  A \emph{path} is a tree where every vertex has degree at most two.  The \emph{length} of a path is the number of its edges.  A \emph{star}
is a tree with a distinguished vertex, called its \emph{center}, adjacent to all other vertices.  A \emph{complete graph} is a graph with all possible edges.  A graph $G$ is called
\emph{distance-hereditary} if for every pair of two vertices $x$ and $y$ of $G$ the distance of $x$ and $y$ in $G$ equals the distance of $x$ and $y$ in any connected induced subgraph
containing both $x$ and $y$~\cite{BandeltM86}.

\subsection{Linear rank-width and vertex-minors}\label{subsec:lrw-vm}

For sets $R$ and $C$, an \emph{$(R,C)$-matrix} is a matrix whose rows and columns are indexed by $R$ and $C$, respectively.
  For an $(R,C)$-matrix $M$,  $X\subseteq R$, and $Y\subseteq C$, 
  let $M[X,Y]$ be the submatrix of $M$ whose rows and columns are indexed by $X$ and $Y$, respectively.

\subsubsection*{\bf Linear rank-width} 

Let $G$ be a graph. We denote by $A_G$ the \emph{adjacency matrix} of $G$ over the binary field.  
The \emph{cut-rank function} of $G$ is a function $\cutrk_G:2^{V(G)}\rightarrow \mathbb{Z}$ where for each $X\subseteq V(G)$,  
\[\cutrk_G(X):=\rank(A_G[X,V(G)\setminus X]).\] 
A sequence $(x_1, \ldots, x_n)$ of the vertex set $V(G)$ is called a \emph{linear layout} of $G$. 
If $\abs{V(G)}\ge 2$, then the \emph{width} of a linear layout $(x_1,\ldots, x_n)$ of $G$ is defined as
\[\max_{1\le i\le n-1}\{\cutrk_G(\{x_1,\ldots,x_i\})\}.\]
The \emph{linear rank-width} of $G$, denoted by $\lrw(G)$, is defined as the minimum width over all linear layouts of $G$ if $\abs{V(G)}\ge 2$, and
otherwise, let $\lrw(G):=0$.

Caterpillars and complete graphs have linear rank-width at most $1$. Ganian~\cite{Ganian10} gave a characterization of the graphs of linear rank-width at most $1$, and call them \emph{thread graphs}. Adler and Kant\'{e}~\cite{AdlerK13} showed that linear rank-width and path-width coincide on forests, and therefore, there is a linear time algorithm to compute the linear rank-width of forests.  It is easy to see that the linear rank-width of a graph is the maximum over the linear
rank-widths of its connected components.

To obtain the bound presented in Theorem~\ref{thm:mainalg}, 
we will need the fact that the linear rank-width of an $n$-vertex distance-hereditary graph $G$ is at most $\log_2 n$.
In fact, we generally show that the linear rank-width of a graph with rank-width $k$ is at most $k\lfloor\log_2 n\rfloor$.
The proof scheme is similar to the one for path-width~\cite{BodlaenderGK91}.

A tree is \emph{subcubic} if it has at least two vertices and every inner vertex has degree~$3$. A \emph{rank-decomposition} of a graph $G$ is a pair $(T,L)$, where $T$ is a subcubic tree and $L$ is a bijection from the vertices of $G$ to the leaves of $T$. For an edge $e$ in $T$, $T\setminus e$ induces a partition $(X_{e} ,Y_{e} )$ of the leaves of $T$. The \emph{width} of an edge $e$ is defined as $\cutrk_{G} (L^{-1}(X_{e} ))$. The \emph{width} of a rank-decomposition $(T,L)$ is the maximum width over all edges of $T$. The \emph{rank-width} of $G$, denoted by $\rw(G)$, is the minimum width over all rank-decompositions of $G$. If $\abs{V(G)}\leq 1$, then $G$ admits no rank-decomposition and $\rw(G)=0$.

\begin{theorem}[Oum \cite{Oum05}]\label{thm:rankwidth1}
A graph is distance-hereditary if and only if it has rank-width at most $1$.
\end{theorem}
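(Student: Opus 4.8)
The statement to prove is: a graph $G$ is distance-hereditary if and only if $\rw(G) \le 1$. I will sketch both directions.

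For the *only if* direction, suppose $G$ is distance-hereditary. The plan is to show that every induced subgraph $H$ of $G$ on at least two vertices admits a vertex $v$ whose neighborhood $N_H(v)$ is "nested" with the other closed/open neighborhoods, using the well-known structural fact (Bandelt–Mulder \cite{BandeltM86}) that every distance-hereditary graph on at least two vertices has either a pendant vertex or a pair of twins (two vertices $x,y$ with $N_H(x)\setminus\{y\} = N_H(y)\setminus\{x\}$, either adjacent or not). I would then build a rank-decomposition recursively: remove such a vertex $v$ (pendant or one of a twin pair), obtain by induction a rank-decomposition $(T',L')$ of $H\setminus v$ of width $\le 1$, and re-attach $v$ by subdividing the leaf edge of $L'(x)$ where $x$ is the partner (the unique neighbor if $v$ is pendant, or its twin). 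The key calculation is that every edge of the new tree has cut-rank at most $1$: for the new leaf edges this is clear since a single row has rank $\le 1$, and for every other edge $e$ the partition it induces differs from a partition in $T'$ only in that $v$ joins the side of $x$; since $v$'s adjacency to the opposite side is either empty or identical to that of $x$, the rank of the corresponding submatrix changes by at most going from rank $\le 1$ to rank $\le 1$. Hereditariness of distance-hereditary graphs under induced subgraphs is what makes the induction go through.

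For the *if* direction, suppose $\rw(G)\le 1$ but $G$ is not distance-hereditary. Then $G$ contains one of the known forbidden induced subgraphs for distance-hereditary graphs (by Bandelt–Mulder \cite{BandeltM86}: the house, the hole $C_k$ for $k\ge 5$, the domino, and the gem). I would check directly that each of these has rank-width $\ge 2$, i.e. that no rank-decomposition has all edge-widths $\le 1$; equivalently, for each such graph $H$ there is no way to split $V(H)$ by a subcubic tree with every induced bipartition $(A,B)$ satisfying $\rank(A_H[A,B])\le 1$. Concretely, a bipartition has cut-rank $\le 1$ iff the non-isolated part of $A$ (vertices with a neighbor in $B$) all have the *same* neighborhood in $B$; one verifies by a short case analysis that each forbidden graph admits no subcubic decomposition respecting this for all edges. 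Since rank-width is minor-monotone under taking induced subgraphs (the cut-rank function only drops when restricting), $G$ would then have rank-width $\ge 2$, a contradiction.

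The main obstacle is the *if* direction if one wants to avoid citing the forbidden-subgraph list: one would instead argue directly that $\rw(G)\le 1$ forces, in every induced subgraph, the existence of a vertex $v$ such that the bipartition cutting off one branch at $v$ has cut-rank $\le 1$, and translate this into the existence of a pendant vertex or twins, then invoke the Bandelt–Mulder characterization of distance-hereditary graphs by pendant-vertex/twin decompositions. The delicate point there is turning a low-cut-rank bipartition in an *arbitrary* rank-decomposition into a *local* structural statement about a single vertex; choosing a leaf edge or a "lowest" edge of the decomposition tree and analyzing the rank-$\le 1$ condition on the small side should suffice. I expect the cleanest write-up uses the twin/pendant decomposition for both directions, so that the theorem becomes a translation between "cut-rank $\le 1$ everywhere" and "recursively built from single vertices by adding pendants and twins."
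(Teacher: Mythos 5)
Your proposal is correct, but note that the paper does not prove this statement at all: Theorem~\ref{thm:rankwidth1} is imported verbatim from Oum \cite{Oum05}, so there is no in-paper proof to compare against. Your reconstruction follows the standard route. The forward direction via the Bandelt--Mulder pendant/twin elimination is sound: attaching the new leaf for $v$ next to the leaf of its partner $x$ adds, on every old cut, a row that is either zero (pendant) or a duplicate of $x$'s row (twin), so no cut-rank exceeds $1$; the only new internal edge separates $\{v,x\}$ from the rest and also has rank at most $1$. For the converse, your second variant is the cleaner one and closes the ``delicate point'' you flag yourself: any subcubic tree with at least three leaves has a cherry, i.e.\ two leaves $u,v$ with a common neighbour, and the edge above that cherry gives the split $(\{u,v\},\,V\setminus\{u,v\})$ of cut-rank at most $1$, which forces the rows of $u$ and $v$ over the complement to be parallel --- hence $u$ is isolated, pendant, or a twin of $v$ --- and then induction plus closure of distance-hereditary graphs under re-attaching such vertices finishes the argument (the two-vertex base case is immediate). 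The first variant (checking that the house, holes, domino and gem all have rank-width $2$, plus monotonicity of cut-rank under induced subgraphs) also works and is a finite verification, e.g.\ every subcubic tree on five leaves has an edge inducing a $2$--$3$ split, and no such split of $C_5$ has cut-rank $1$. Either write-up would be an acceptable self-contained substitute for the citation.
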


\begin{lemma}\label{lem:lrwtrivialbound}
Let $k$ be a positive integer and let $G$ be a graph of rank-width $k$. 
Then $\lrw(G)\le k \lfloor\log_{2}\abs{V(G)}\rfloor$.
\end{lemma}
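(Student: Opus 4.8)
The plan is to prove the statement by induction on $\abs{V(G)}$, using a divide-and-conquer strategy analogous to the classical argument bounding path-width by tree-width times $\log n$. First I would recall that a rank-decomposition $(T,L)$ of $G$ of width $k$ is given. The key observation is a balanced-separator property of subcubic trees: for any subcubic tree $T$ with leaf set of size $n$, there is an edge $e$ of $T$ whose removal partitions the leaves into two parts $X_e, Y_e$ with $\lceil n/3\rceil \le \abs{X_e}, \abs{Y_e}$ — more usefully, one can find an edge giving a partition where the larger side has at most $\lceil 2n/3 \rceil$ leaves, or (if we prefer powers of two and a floor) a split into parts of size at most $\lceil n/2 \rceil$ each after a suitable choice. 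I would use the split where neither side exceeds $\lceil n/2 \rceil$; actually for the clean bound $\lfloor \log_2 n\rfloor$ the right move is to pick an edge $e$ so that both $\abs{L^{-1}(X_e)}$ and $\abs{L^{-1}(Y_e)}$ are at most $\lceil n/2\rceil$, equivalently $\le \lfloor n/2 \rfloor$ when $n$ is even and $(n+1)/2$ when $n$ is odd.

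Next, set $A := L^{-1}(X_e)$ and $B := L^{-1}(Y_e)$, so $(A,B)$ is a partition of $V(G)$ with $\cutrk_G(A) \le k$. The induced subgraphs $G[A]$ and $G[B]$ inherit rank-decompositions of width at most $k$ (restrict $T$ to the relevant part and suppress degree-2 vertices; cut-rank is monotone under taking induced subgraphs), so by induction each has a linear layout of width at most $k\lfloor \log_2 \lceil n/2\rceil \rfloor \le k(\lfloor \log_2 n\rfloor - 1)$. I would then concatenate these two layouts: take the layout of $G[A]$ followed by the layout of $G[B]$, obtaining a linear layout $(x_1,\dots,x_n)$ of $G$. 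For a prefix ending inside the $A$-block, say $\{x_1,\dots,x_i\} = S \subseteq A$, submodularity of the cut-rank function gives
\[
\cutrk_G(S) \le \cutrk_{G[A]}(S) + \cutrk_G(A) \le k(\lfloor\log_2 n\rfloor - 1) + k = k\lfloor \log_2 n\rfloor,
\]
and symmetrically for a prefix ending inside the $B$-block, writing the prefix as $A \cup S'$ with $S' \subseteq B$ and using $\cutrk_G(A\cup S') = \cutrk_G(B \setminus S') \le \cutrk_{G[B]}(B\setminus S') + \cutrk_G(B)$ together with $\cutrk_G(B) = \cutrk_G(A) \le k$. This shows the concatenated layout has width at most $k\lfloor \log_2 n \rfloor$.

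The base cases ($\abs{V(G)} \le 2$, or small $n$ where $\lfloor \log_2 n\rfloor$ forces one side to be a single vertex) are handled directly: a graph on at most two vertices has linear rank-width $0 \le k\lfloor\log_2 n\rfloor$ trivially, and the induction is well-founded since $\lceil n/2 \rceil < n$ for $n \ge 2$. The step I expect to require the most care is verifying the balanced-edge choice in the subcubic tree and checking that $\lceil \log_2 \lceil n/2\rceil\rceil$... more precisely $\lfloor \log_2 \lceil n/2 \rceil \rfloor \le \lfloor \log_2 n\rfloor - 1$ holds — this is a small but real arithmetic point (it holds for all $n \ge 2$), and one must also make sure that when one of $A$, $B$ is a singleton the corresponding "layout" contributes width $0$ and the cut across $e$ still gives only the additive $k$. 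The submodularity inequality $\cutrk_G(S) \le \cutrk_{G[A]}(S) + \cutrk_G(A)$ for $S \subseteq A$ should be stated as a small separate observation (it follows from submodularity of $\cutrk_G$ applied to $S$ and $A$, since $\cutrk_G(S \cap A) = \cutrk_G(S)$ and $\cutrk_{G}(S \cup A) = \cutrk_G(A)$, and $\cutrk_G(S)$ restricted to the $G[A]$ side equals $\cutrk_{G[A]}(S)$ because the neighborhood of $S$ outside $A$ is irrelevant to $G[A]$ — actually one uses $\cutrk_{G[A]}(S) \le \cutrk_G(S)$, giving the bound in the needed direction after re-deriving; I would phrase it as $\cutrk_G(S) \le \cutrk_{G[A]}(S) + \cutrk_G(A)$ via submodularity directly).
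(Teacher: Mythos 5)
Your recursion scheme and the subadditivity bound $\cutrk_G(S)\le \cutrk_{G[A]}(S)+\cutrk_G(A)$ for $S\subseteq A$ are both fine (the latter is just rank subadditivity over the column blocks $A\setminus S$ and $B$). The gap is the balanced-edge claim on which the whole count rests: a subcubic tree need \emph{not} have an edge whose removal leaves at most $\lceil n/2\rceil$ leaves on each side. Take a degree-$3$ vertex $c$ with three caterpillar branches carrying $m$ leaves each, so $n=3m$: every edge incident to $c$ splits the leaves $m$ versus $2m$, and every edge inside a branch splits $j$ versus $3m-j$ with $j\le m$, so the best achievable larger side is $2m=2n/3>\lceil n/2\rceil$ for $m\ge 2$. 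The correct guarantee for subcubic trees is only a $1/3$--$2/3$ split, and with that your recursion $f(n)\le k+f(\lceil 2n/3\rceil)$ yields $\lrw(G)\le k\lceil\log_{3/2}n\rceil$, which is weaker than the stated $k\lfloor\log_2 n\rfloor$ (and the $\log_2$ constant is what the paper's running-time bound quotes). So as written the proof establishes a true but strictly weaker inequality.

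The paper avoids this by not recursing on a single balanced cut at all: it roots $T$, orders the leaves so that at every internal node the subtree with more leaves is placed first, and then, for an arbitrary prefix $S_w$, writes $S_w$ as a disjoint union of leaf-sets $Q_1,\ldots,Q_m$ of subtrees hanging off the root-path of $w$. Each $Q_j$ contributes at most $k$ to $\cutrk_G(S_w)$ by subadditivity (same idea as yours), and because each $Q_j$ is the \emph{heavier} side at its branch point it satisfies $\abs{Q_j}\ge 1+\abs{Q_1}+\cdots+\abs{Q_{j-1}}\ge 2^{j-1}$, forcing $n\ge 2^m$ and hence $m\le\lfloor\log_2 n\rfloor$. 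In effect the doubling happens along the root-path of each individual prefix rather than globally, which is why no balanced separator is needed. If you want to salvage your induction, you would have to replace the balanced edge by this ``heavy child first'' ordering (or accept the $\log_{3/2}$ constant).
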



Lemma~\ref{lem:lrwtrivialbound} will be proved in Section~\ref{sec:upperbound}.

\subsubsection*{\bf Vertex-minors}  For a graph $G$ and a vertex $x$ of $G$, the \emph{local complementation at $x$} of $G$ is an operation to replace the subgraph induced by the neighbors of $x$ with its
complement. The resulting graph is denoted by $G*x$.  If $H$ can be obtained from $G$ by applying a sequence of local complementations, then $G$ and $H$ are called \emph{locally equivalent}.  A graph $H$ is called a
\emph{vertex-minor} of a graph $G$ if $H$ can be obtained from $G$ by applying a sequence of local complementations and deletions of vertices. 

\begin{lemma}[Oum \cite{Oum05}] \label{lem:vm-rw} Let $G$ be a graph and let $x$ be a vertex of $G$. Then for every subset $X$ of $V(G)$, we have $\cutrk_G(X)=\cutrk_{G*x}(X)$. Therefore, every vertex-minor 
   $H$ of $G$ satisfies that $\lrw(H) \leq \lrw(G)$. 
\end{lemma}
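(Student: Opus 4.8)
This is a known result of Oum; the approach I would take is the following. The crux is the first assertion — that local complementation preserves the entire cut-rank function — and the conclusion about vertex-minors then follows by combining it with the easy behaviour of $\lrw$ under vertex deletion.

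First I would fix $X\subseteq V(G)$, set $Y:=V(G)\setminus X$, and recall that $\cutrk_G(X)=\rank(A_G[X,Y])=\rank(A_G[Y,X])=\cutrk_G(Y)$, since a matrix and its transpose have equal rank, and likewise in $G*x$. Hence, after possibly swapping $X$ and $Y$, I may assume $x\in X$. Write $N:=N_G(x)$, $N_X:=N\cap X$, and $N_Y:=N\cap Y$. Two observations drive the argument: (i) the row of $A_G[X,Y]$ indexed by $x$ is exactly the characteristic vector of $N_Y$ inside $Y$, and this row is unchanged by $G\mapsto G*x$ because local complementation at $x$ does not alter $N_G(x)$; (ii) for $u\in X\setminus\{x\}$ and $y\in Y$, the entry in position $(u,y)$ of the adjacency matrix is flipped by $G\mapsto G*x$ precisely when $u\in N_X$ and $y\in N_Y$. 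Combining (i) and (ii), $A_{G*x}[X,Y]$ is obtained from $A_G[X,Y]$ by adding, over $\bF_2$, the row indexed by $x$ to each of the rows indexed by the vertices of $N_X$ (note $x\notin N_X$, so no row is added to itself). These are elementary row operations, hence invertible over $\bF_2$, so they preserve the rank; therefore $\cutrk_{G*x}(X)=\cutrk_G(X)$, and since $X$ was arbitrary, $\cutrk_{G*x}=\cutrk_G$.

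For the second assertion, I would note that the width of a linear layout of a graph depends only on its cut-rank function, and that $G$ and $G*x$ have the same vertex set and hence the same linear layouts; so $\lrw(G*x)=\lrw(G)$, and by iteration $\lrw(G')=\lrw(G)$ whenever $G'$ is locally equivalent to $G$. For a single vertex deletion, I would take an optimal linear layout of $G$ and erase the occurrence of $v$: each prefix of the resulting layout of $G\setminus v$ has cut-rank in $G\setminus v$ equal to the rank of a submatrix of some $A_G[\{x_1,\ldots,x_i\},V(G)\setminus\{x_1,\ldots,x_i\}]$ obtained by deleting at most one row and at most one column, so its value does not exceed the corresponding cut-rank in $G$; hence $\lrw(G\setminus v)\le\lrw(G)$. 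Since every vertex-minor $H$ of $G$ arises from a graph $G'$ locally equivalent to $G$ by a sequence of vertex deletions, we get $\lrw(H)\le\lrw(G')=\lrw(G)$, the degenerate case $\abs{V(H)}\le 1$ being trivial as then $\lrw(H)=0\le\lrw(G)$.

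The main obstacle — really the only point beyond bookkeeping — is getting observation (ii) and its consequence exactly right: seeing that the local-complementation flips, which occur only on the block of $A_G[X,Y]$ between $N_X$ and $N_Y$, are realized precisely by adding the $x$-row to the $N_X$-rows of the cut matrix, while the $x$-row itself and every row outside $N_X$ stay untouched, so that the passage from $A_G[X,Y]$ to $A_{G*x}[X,Y]$ is a genuine sequence of elementary row operations.
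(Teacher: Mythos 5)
Your proof is correct. The paper does not prove this lemma at all --- it is quoted from Oum~\cite{Oum05} --- and your argument (realizing the local complementation at $x$ as the addition, over $\bF_2$, of the unchanged $x$-row of the cut matrix to the rows indexed by $N_G(x)\cap X$, then handling vertex deletion by submatrix monotonicity of rank) is exactly the standard proof of that cited result.
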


For an edge $xy$ of $G$, let $W_1:=N_G(x)\cap N_G(y)$, $W_2:=(N_G(x)\setminus N_G(y))\setminus \{y\}$, and $W_3:=(N_G(y)\setminus N_G(x))\setminus \{x\}$.  The \emph{pivoting on $xy$} of $G$,
denoted by $G\wedge xy$, is the operation to complement the adjacencies between distinct sets $W_i$ and $W_j$, and swap the vertices $x$ and $y$.  It is known that $G\wedge
xy=G*x*y*x=G*y*x*y$ \cite{Oum05}.  See Figure~\ref{fig:pivoting} for an example.

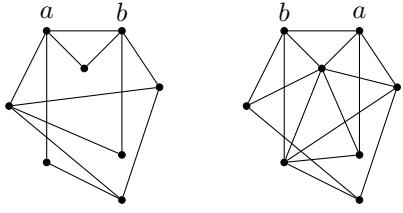
\begin{figure}[t]\centering
\tikzstyle{v}=[circle, draw, solid, fill=black, inner sep=0pt, minimum width=2.5pt]
\tikzset{photon/.style={decorate, decoration={snake}}}
\begin{tikzpicture}[scale=0.05]

\node [v] (a) at (10,50) {};
\node [v] (b) at (30,50) {};
\node [v] (c) at (0,30) {};
\node [v] (d) at (10,15) {};
\node [v] (e) at (30,17) {};
\node [v] (f) at (40,35) {};
\node [v] (g) at (30,5) {};
\node [v] (h) at (20,40) {};

\draw (c) -- (a) -- (b) -- (f);
\draw (d) -- (a);
\draw (e) -- (b);
\draw (f) -- (c);
\draw (c)-- (e);
\draw (g)--(c);
\draw (d) -- (g) -- (f);
\draw (a)--(h)--(b);

\draw (10,55) node{$a$};
\draw (30,55) node{$b$};

\end{tikzpicture}\qquad\quad
\begin{tikzpicture}[scale=0.05]

\node [v] (a) at (10,50) {};
\node [v] (b) at (30,50) {};
\node [v] (c) at (0,30) {};
\node [v] (d) at (10,15) {};
\node [v] (e) at (30,17) {};
\node [v] (f) at (40,35) {};
\node [v] (g) at (30,5) {};
\node [v] (h) at (20,40) {};

\draw (c) -- (a) -- (b) -- (f);
\draw (d) -- (a);
\draw (e) -- (b);
\draw (f) -- (d);
\draw (d)-- (e);

\draw (g)--(c);
\draw (d) -- (g) -- (f);
\draw (a)--(h)--(b);

\draw (c) -- (h);
\draw (d)-- (h);
\draw (e) -- (h);
\draw (f)-- (h);

\draw (10,55) node{$b$};
\draw (30,55) node{$a$};

\end{tikzpicture}
\caption{Pivoting an edge $ab$.}\label{fig:pivoting}
\end{figure}


We introduce some basic lemmas on local complementations, which will be used in several places.

\begin{lemma}\label{lem:changeloc} Let $G$ be a graph and $x, y\in V(G)$ such that $xy\notin E(G)$.  
Then $G*x*y=G*y*x$. \end{lemma}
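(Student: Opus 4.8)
The plan is to verify the claim by comparing the edge sets of $G*x*y$ and $G*y*x$ directly, working locally around the vertices $x$ and $y$. Since local complementation at a vertex only affects edges between neighbors of that vertex, and since $xy \notin E(G)$, the key observation is that $y$ is not a neighbor of $x$ and $x$ is not a neighbor of $y$, so applying local complementation at one of them does not change the neighborhood of the other, nor does it create an edge $xy$. First I would record this: $N_{G*x}(y) = N_G(y)$ and $xy \notin E(G*x)$, and symmetrically $N_{G*y}(x) = N_G(x)$ and $xy \notin E(G*y)$. Hence in the composite $G*x*y$, the operation $*y$ complements adjacencies within $N_G(y)$, and in $G*y*x$ the operation $*x$ complements adjacencies within $N_G(x)$, with the two orders of operations touching the same pairs of vertices.

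Next I would argue pair by pair. Fix two distinct vertices $u, v \in V(G)$, neither operation being able to affect the pair $\{x,y\}$ itself (which stays a non-edge in both graphs). For a general pair $\{u,v\}$, whether $uv$ is an edge of $G*x*y$ depends on: whether $uv \in E(G)$, whether both $u,v \in N_G(x)$ (this toggles the adjacency once for $*x$), and whether both $u,v \in N_{G*x}(y) = N_G(y)$ (this toggles once for $*y$). So the adjacency of $u$ and $v$ in $G*x*y$ is obtained from that in $G$ by toggling exactly $[\,u,v \in N_G(x)\,] + [\,u,v \in N_G(y)\,]$ times (mod $2$). The same count governs $G*y*x$, since the condition "$u,v \in N_{G*y}(x) = N_G(x)$" is unchanged and the condition "$u,v \in N_G(y)$" is unchanged. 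As the total parity of toggles is the same in both cases, $uv \in E(G*x*y)$ if and only if $uv \in E(G*y*x)$.

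The only point requiring a little care — and the step I expect to be the mild obstacle — is the bookkeeping that $N_{G*x}(y) = N_G(y)$, i.e.\ that the first local complementation does not silently alter which vertices are adjacent to $y$. This is exactly where the hypothesis $xy \notin E(G)$ is used: local complementation at $x$ only flips edges inside $N_G(x)$, and $y \notin N_G(x)$, so no edge incident to $y$ is flipped; in particular $y$ gains no neighbor among $N_G(x)$ and keeps all its old ones. Symmetrically $N_{G*y}(x) = N_G(x)$. Once this is in hand, the parity argument above closes the proof, and one concludes $G*x*y = G*y*x$.
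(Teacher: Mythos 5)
Your proof is correct and takes essentially the same route as the paper, which simply observes that since $xy\notin E(G)$ neither local complementation changes the neighbour set of the other vertex; you have just made the resulting per-pair toggle-parity bookkeeping explicit.
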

\begin{proof}
It is straightforward as applying a local complementation at $x$ or $y$ does not change the neighbor sets of $x$ and $y$.
\end{proof}

\begin{lemma}\label{lem:changeloc2} Let $G$ be a graph and $x, y,z\in V(G)$ such that $xy,xz\notin E(G)$ and $yz\in E(G)$.  
Then $G*x\wedge yz=G\wedge yz *x$. \end{lemma}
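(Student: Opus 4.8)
We have $xy, xz \notin E(G)$ and $yz \in E(G)$, and we want $G*x \wedge yz = G \wedge yz * x$. The plan is to recall $H \wedge yz = H*y*z*y$ and then track, for each vertex $w \notin \{x,y,z\}$, how its adjacencies to $x$ and to $\{y,z\}$ evolve under the two sides.

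First I would observe that the edge $yz$ is present both in $G$ and in $G*x$: since $x$ is adjacent to neither $y$ nor $z$, local complementation at $x$ does not touch the pair $\{y,z\}$. So both pivots $\wedge yz$ are legitimate operations on the respective graphs. Next, the key bookkeeping: partition $V(G) \setminus \{x,y,z\}$ according to adjacency to $y$ and $z$ (the sets $W_1 = N(y)\cap N(z)$, $W_2 = N(y)\setminus N(z)$, $W_3 = N(z)\setminus N(y)$, and the rest), and also record for each such vertex whether it is adjacent to $x$. I claim the operation $\wedge yz$ only ever modifies (a) adjacencies between vertices in two different parts among $W_1, W_2, W_3$, and (b) the special role of $y$ and $z$ themselves (they get swapped, and $y,z$ pick up/lose neighbors among $W_1,W_2,W_3$); crucially it never changes whether a vertex $w \notin\{x,y,z\}$ is adjacent to $x$, nor does it change $w$'s adjacency to $x$ based on anything. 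Meanwhile $*x$ only modifies adjacencies within $N(x)$, and since $y,z \notin N(x)$, the set $N(x) \cap (V \setminus\{x,y,z\})$ is the same in $G$ and in $G \wedge yz$ — because $\wedge yz$ fixes all adjacencies to $x$. So the two operations act on ``disjoint coordinates'' of the adjacency structure.

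The cleanest way to package this is to use the identities $\wedge yz = {*y}{*z}{*y}$ and Lemma~\ref{lem:changeloc}: since $xy \notin E(G)$, we have $G*x*y = G*y*x$; since $xz \notin E(G*y)$ — here one must check $xz \notin E(G*y)$, which holds because $*y$ changes adjacencies only within $N(y)$ and $x \notin N(y)$ as $xy \notin E(G)$ — we get $(G*x*y)*z = (G*y*x)*z = (G*y*z)*x$ again by Lemma~\ref{lem:changeloc} applied to the non-edge $xz$ in $G*y$. Then one more application: $xy \notin E(G*y*z)$, since $*z$ changes adjacencies only within $N(z)$ and after $*y$ the vertex $x$ still satisfies $x \notin N(z)$ (because $*y$ doesn't create the edge $xz$, as just argued), so $(G*y*z*x)*y = (G*y*z*y)*x = (G\wedge yz)*x$. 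Chaining these, $G*x \wedge yz = G*x*y*z*y = G*y*x*z*y = G*y*z*x*y = G*y*z*y*x = G \wedge yz * x$, as desired.

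The main obstacle is purely the non-edge preservation lemma applications: at each of the three steps we invoke Lemma~\ref{lem:changeloc}, which requires the relevant pair to be a non-edge in the current graph, and we must verify that $x$ stays non-adjacent to $y$ and to $z$ throughout the partial sequence $*y$, $*z$. This follows from the basic fact that local complementation at a vertex $v$ alters only adjacencies between pairs of neighbors of $v$, hence never creates or destroys an edge incident to a vertex outside $N(v)\cup\{v\}$ — but it should be stated explicitly. Once that is in hand, the proof is a three-line swap.
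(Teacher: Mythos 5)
Your proof is correct and follows essentially the same route as the paper: expand $\wedge yz$ as $*y*z*y$, verify that $xy\notin E(G)$, $xz\notin E(G*y)$, and $xy\notin E(G*y*z)$, and apply Lemma~\ref{lem:changeloc} three times to commute $*x$ past $*y$, $*z$, $*y$. The only difference is that you spell out the non-edge verifications in more detail than the paper does, which is fine.
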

\begin{proof}
By the definition of pivoting, $G*x\wedge yz=G*x*y*z*y$.
Note that $xy\notin E(G)$, $xz\notin E(G*y)$, and $xy\notin E(G*y*z)$.
Therefore, by Lemma~\ref{lem:changeloc}, $G*x*y*z*y=(G*y)*x*z*y=(G*y*z)*x*y=(G*y*z*y)*x=G\wedge yz*x$.
\end{proof}

\begin{lemma}[Oum \cite{Oum05}]\label{lem:equipiv} Let $G$ be a graph and $x,y,z\in V(G)$ such that $xy, yz\in E(G)$.  Then $G\wedge xy\wedge xz=G\wedge yz$. \end{lemma}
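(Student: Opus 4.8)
The plan is to reduce the claim to an equation between sequences of local complementations and then verify that by a direct computation on adjacencies. Since $xy\in E(G)$ we have $G\wedge xy=G*x*y*x$. The swap performed by $\wedge xy$ carries the edge $yz$ (which is untouched by the complementations between the sets $W_i$, since $y$ lies in none of them) to an edge incident to the vertex now labelled $x$, so $xz\in E(G\wedge xy)$ whether or not $xz\in E(G)$; hence the middle pivot is legitimate and
\[ G\wedge xy\wedge xz=(G\wedge xy)*x*z*x=G*x*y*x*x*z*x=G*x*y*z*x, \]
using that local complementation is an involution. Likewise $G\wedge yz=G*y*z*y$. So it suffices to prove the identity $G*x*y*z*x=G*y*z*y$.

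I would then check this identity by comparing, for every pair of vertices of $G$, the adjacency on the two sides. The vertices split into $\{x,y,z\}$ and the rest, and a vertex $w\notin\{x,y,z\}$ is classified by the triple recording whether it is adjacent to $x$, to $y$, to $z$. Since $H*v$ toggles the adjacency of two vertices exactly when both belong to $N_H(v)$, one traces such a triple (and, for an edge between two outside vertices, a pair of triples) through the length-four word $x*y*z*x$ and through the length-three word $y*z*y$, and checks that the two results agree. This turns the whole statement into a finite, mechanical case distinction. As a partial sanity check, tracing the adjacencies among $x,y,z$ themselves already shows that the restrictions of $G*x*y*z*x$ and $G*y*z*y$ to $\{x,y,z\}$ coincide, the common value depending only on whether $xz\in E(G)$; the same bookkeeping then handles the edges $wx,wy,wz$ and the edges $ww'$ between two outside vertices.

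I expect the main obstacle to be exactly this bookkeeping, and in particular the fact that it cannot be carried out uniformly: whether $xz\in E(G)$ (so that $x,y,z$ induce a path) or $xz\notin E(G)$ (so that they induce a triangle) changes, at several intermediate stages, which vertices lie in the neighbourhood being complemented, and hence which adjacencies are toggled — so the two cases must be separated. An alternative route that makes this split explicit is to rewrite the word $x*y*z*x$ towards $y*z*y$ using only the involution $*v*v=\mathrm{id}$ together with the commutation identities of Lemmas~\ref{lem:changeloc} and~\ref{lem:changeloc2}; but the hypotheses of those lemmas (a non-edge, respectively two non-edges) are available only in one of the two cases at a time, so this route also forces one to treat the path case and the triangle case separately. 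Either way the argument is routine once organised, e.g.\ as a small table indexed by the at most eight adjacency patterns, and the content of the lemma is precisely that all the entries of that table match.
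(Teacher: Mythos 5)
The paper does not actually prove this lemma --- it is quoted from Oum~\cite{Oum05} --- so there is no in-paper argument to compare against; I can only assess your proposal on its own terms, and it is sound. The two key points you need are both handled correctly: first, that $xz\in E(G\wedge xy)$ regardless of whether $xz\in E(G)$ (since the complementations between the sets $W_i$ never touch edges incident with $y$, and the swap then carries $yz$ to $xz$), so the middle pivot is defined and the identity $H\wedge xz=H*x*z*x$ applies; and second, the reduction via $*x*x=\mathrm{id}$ to the word identity $G*x*y*z*x=G*y*z*y$. I spot-checked this last identity on several configurations (the path and triangle on $\{x,y,z\}$, and with an extra vertex in various adjacency patterns) and it holds, so the finite table you describe does close; you are also right that the computation bifurcates according to whether $xz\in E(G)$ and cannot be made uniform. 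The one caveat is that you have described the verification rather than carried it out --- for a complete proof the (at most eight) adjacency classes of an outside vertex, and the pairs of classes for an outside edge, must actually be traced through both words. A slightly slicker alternative that avoids most of this bookkeeping is to work directly with the $W_1,W_2,W_3$ description of pivoting: one checks that the tripartition of $N(x)\cup N(y)\cup N(z)\setminus\{x,y,z\}$ relevant to $\wedge yz$ is complemented in exactly the same pairs by the composite $\wedge xy\wedge xz$, and that both sides induce the same relabelling of $\{x,y,z\}$; but your route through local complementations is equally valid and uses only machinery the paper already sets up in Lemmas~\ref{lem:changeloc} and~\ref{lem:changeloc2}.
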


\subsection{Split decompositions and local complementations}\label{subsec:splitdecs} 

We will follow the definition of split decompositions in \cite{Bouchet88}.  
We notice that split decompositions are usually defined on connected graphs. For computing the linear rank-width of a distance-hereditary graph, it is enough to compute the linear rank-width of its connected components and take the maximum over all those values. Thus we will mostly assume that the given graph is connected in this paper, and use split decompositions in usual sense.

Let $G$ be a connected graph. A \emph{split} in $G$ is a vertex partition $(X,Y)$ of $G$ such that $|X|,|Y|\geq 2$ and
$\rank (A_G[X,Y]) = 1$. In other words, $(X,Y)$ is a split in $G$ if $|X|,|Y| \geq 2$ and there exist non-empty sets $X'\subseteq X$ and $Y'\subseteq Y$ such that $\{xy\in E(G) \mid x\in X, y\in Y\} =
\{xy \mid x\in X', y\in Y'\}$.  Notice that not all connected graphs have a split, and those that do not have a split are called \emph{prime} graphs.

A \emph{marked graph} $D$ is a connected graph $D$ with a set of edges $M(D)$, called \emph{marked edges}, that form a matching such that every edge in $M(D)$ is a cut-edge. The ends of the marked edges are called \emph{marked vertices}, and the components of $(V(D), E(D)\setminus M(D))$ are called \emph{bags} of $D$. The edges in
$E(D)\setminus M(D)$ are called \emph{unmarked edges}, and the vertices that are not marked vertices are called \emph{unmarked vertices}. 
If $(X,Y)$ is a split in $G$, then we construct a marked graph $D$ that consists of the vertex set $V(G) \cup \{x',y'\}$ for two distinct new vertices $x',y'\notin V(G)$ and the edge set $E(G[X]) \cup E(G[Y]) \cup \{x'y'\} \cup E'$ where we define $x'y'$
as marked and
\begin{align*}
E' &:= \{x'x\mid x\in X\ \textrm{and there exists $y\in Y$ such that $xy\in E(G)$}\} \cup\\ & \qquad \{y'y \mid y\in Y\ \textrm{and there exists $x\in X$ such that $xy\in E(G)$}\}.
\end{align*}
The marked graph $D$ is called a \emph{simple decomposition of} $G$. 

A \emph{split decomposition} of a connected graph $G$ is a marked graph $D$ defined inductively to be either $G$ or a marked graph
defined from a split decomposition $D'$ of $G$ by replacing a component $H$ of $(V(D'),E(D')\setminus M(D'))$ with a simple decomposition of $H$.  For a marked edge $xy$ in a split decomposition $D$, the \emph{recomposition
  of $D$ along $xy$} is the split decomposition $D':=(D\wedge xy) \setminus \{x,y\}$.  For a split decomposition $D$, let $\origin{D}$ denote the graph obtained from $D$ by recomposing all marked
edges. Note that if $D$ is a split decomposition of $G$, then $\origin{D}=G$.  Since each marked edge of a split decomposition $D$ is a cut-edge and all marked edges form a matching, if we contract all unmarked edges in $D$, then we
obtain a tree. We call it the \emph{decomposition tree of $G$ associated with $D$} and denote it by $T_D$.  
To distinguish the vertices of $T_D$ from the vertices of $G$ or $D$, 
the vertices of $T_D$ will be called \emph{nodes}. 
Obviously, the nodes of $T_D$ are in bijection with the bags of $D$.
Two bags of $D$ are
called \emph{neighbor bags} if their corresponding nodes in $T_D$ are adjacent.

A split decomposition $D$ of $G$ is called a \emph{canonical split decomposition} (or \emph{canonical decomposition} for short) if each bag of $D$ is either a prime, a star, or a complete graph, and $D$ is
not the refinement of a decomposition with the same property.  The following is due to Cunningham and Edmonds \cite{CunninghamE80}, and Dahlhaus \cite{Dahlhaus00}.

\begin{theorem}[Cunningham and Edmonds \cite{CunninghamE80}; Dahlhaus \cite{Dahlhaus00}] \label{thm:CED} Every connected graph $G$ has a unique canonical decomposition, up to isomorphism, and it can be computed in time $\cO(|V(G)|
  +|E(G)|)$.
\end{theorem}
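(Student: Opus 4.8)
This statement has a structural half --- a canonical decomposition exists and is unique up to isomorphism --- and an algorithmic half --- it is computable in linear time --- and I would treat them separately, basing the structural part on Cunningham's theory of splits. The plan for the structural part is to study the family of all splits of $G$ under the \emph{crossing} relation: two splits $(X_1,Y_1)$ and $(X_2,Y_2)$ \emph{cross} when each of $X_1\cap X_2$, $X_1\cap Y_2$, $Y_1\cap X_2$, $Y_1\cap Y_2$ is nonempty. The crucial lemma to establish is that crossing is confined to degenerate pieces: if two splits of a connected graph cross, then, roughly, in the region they jointly determine the graph looks like a star or a complete graph. Granting this, call a split \emph{strong} if no split crosses it; strong splits are pairwise non-crossing, so decomposing $G$ along all of them (through successive simple decompositions) produces a decomposition in which every bag is either prime (it admits no split) or, having only pairwise-crossing internal splits, is a star or a complete graph.

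For existence of a \emph{canonical} decomposition one starts from any decomposition into prime/star/complete bags --- such a decomposition exists, e.g.\ by decomposing greedily along arbitrary splits until no bag admits one --- and then repeatedly \emph{recomposes} along a marked edge $xy$ whenever $(D\wedge xy)\setminus\{x,y\}$ is again prime, a star, or a complete graph; each such step decreases the number of bags, so the process terminates at a decomposition with no recomposable marked edge, which is canonical in the sense of the definition. For uniqueness the plan is to show that a decomposition with no recomposable marked edge is pinned down by $G$ alone: its marked-edge cuts are exactly the strong splits of $G$ (possibly after normalizing certain configurations of adjacent degenerate bags), and the strong splits of $G$ together with their nesting form an invariant of $G$; hence any two canonical decompositions have isomorphic decomposition trees and pairwise isomorphic corresponding bags. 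For the algorithmic half, the Cunningham--Edmonds argument already gives a polynomial-time algorithm, essentially computing the strong splits by repeated cut computations and assembling the decomposition tree; to reach $\cO(|V(G)|+|E(G)|)$ I would follow Dahlhaus's incremental scheme: fix an ordering of $V(G)$, maintain the canonical decomposition of each growing prefix, and on inserting a new vertex locate the bounded portion of the decomposition tree it affects, update it, and re-reduce only there, all supported by union--find and balanced-search-tree data structures.

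The main obstacle is the structural lemma on crossing splits together with the classification of which neighbor bags are recomposable: one must work through the relative positions of two crossing splits and argue that a genuine crossing forces a clique or a star, and the bookkeeping for stars --- whether a marked end is the center or a leaf, and how leaf/leaf, leaf/center, and center/center adjacencies behave under recomposition --- is fiddly. Dahlhaus's linear-time update procedure is likewise intricate, which is why we invoke this classical result rather than reproving it.
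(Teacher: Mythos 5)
The paper does not prove this statement; it is imported verbatim from Cunningham--Edmonds and Dahlhaus, so there is no in-paper proof to compare against. Your outline faithfully reproduces the classical argument of those sources: non-crossing (strong) splits yielding a prime/star/complete decomposition, recomposition of degenerate marked edges (which in this paper's terminology are exactly the $KK$ and $S_pS_c$ edges of Theorem~\ref{thm:can-forbid}) to reach an irreducible decomposition, uniqueness via the invariance of strong splits, and Dahlhaus's incremental scheme for linear time. Be aware that what you have written is a correct roadmap rather than a proof --- the crossing-splits lemma, the identification of marked-edge cuts with strong splits, and the linear-time update procedure are precisely where all the work lies, and you explicitly defer them --- but for a cited classical theorem that is an acceptable level of detail, and nothing in your sketch is wrong.
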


From Theorem~\ref{thm:CED}, we can talk about only one canonical decomposition of a connected graph $G$ because all canonical decompositions of $G$ are isomorphic.

Let $D$ be a split decomposition of a connected graph $G$ with bags that are either primes, complete graphs or stars (it is not necessarily a canonical decomposition).  The \emph{type of a bag} of $D$ is either $P$, $K$, or $S$ depending on
whether it is a prime, a complete graph, or a star. The \emph{type of a marked edge} $uv$  is $AB$ where $A$ and $B$ are the types of the bags containing $u$ and $v$ respectively. If $A=S$ or $B=S$, then we can replace
$S$ by $S_p$ or $S_c$ depending on whether the end of the marked edge is a leaf or the center of the star. 

\begin{theorem}[Bouchet \cite{Bouchet88}]\label{thm:can-forbid} Let $D$ be a split decomposition of a connected graph with bags that are either primes, complete graphs, or stars. Then $D$ is a canonical decomposition if and only if it has no marked edge of type $KK$ or   $S_pS_c$.
\end{theorem}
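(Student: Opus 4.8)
The plan is to prove both implications by analyzing the single \emph{coarsening} step that inverts one simple decomposition — recomposition along a marked edge — and by tracking precisely which bag appears when two adjacent bags are merged.

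For the forward direction I would argue by contraposition. Assume $D$ has a marked edge $uv$ of type $KK$ or $S_pS_c$, with $u$ in bag $B_u$ and $v$ in bag $B_v$, and set $D':=(D\wedge uv)\setminus\{u,v\}$, which is again a split decomposition of $G$. The key is to compute the bag $B^*$ of $D'$ that replaces $B_u$ and $B_v$. Since $uv$ is a cut-edge we have $N_D(u)\cap N_D(v)=\emptyset$, so pivoting on $uv$ merely makes $N_D(u)\setminus\{v\}$ completely adjacent to $N_D(v)\setminus\{u\}$; deleting $u$ and $v$ then fuses $B_u\setminus\{u\}$ and $B_v\setminus\{v\}$ into $B^*$. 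If the type is $KK$, both sets are complete and every pair between them is now an edge, so $B^*$ is complete; if the type is $S_pS_c$, then $N_D(u)\setminus\{v\}$ is just the center $c_u$ of $B_u$ while $N_D(v)\setminus\{u\}$ is the set of leaves of $B_v$, so $B^*$ is a star with center $c_u$. In either case all bags of $D'$ are prime, complete, or stars, and $D$ is recovered from $D'$ by replacing $B^*$ with the simple decomposition along the split $(B_u\setminus\{u\},\,B_v\setminus\{v\})$; since every bag of a split decomposition has at least three vertices, both sides of this split have size at least $2$, so it is a genuine split and $D$ is a \emph{proper} refinement of $D'$. Hence $D$ is not canonical.

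For the backward direction, suppose $D$ is not canonical; I want to exhibit a marked edge of type $KK$ or $S_pS_c$. By definition $D$ is a proper refinement of some split decomposition $D'$ all of whose bags are prime, complete, or stars. The first simple-decomposition step on the way from $D'$ to $D$ refines some bag $B'$ of $D'$; as a prime graph admits no split, $B'$ is complete or a star, and the bags of $D$ obtained by refining $B'$ form a \emph{nontrivial} split decomposition $D_{B'}$ of $B'$. I would then prove, by an easy induction following the definition of split decomposition, the two structural facts: every split decomposition of a complete graph has all bags complete (a simple decomposition of a complete graph yields two complete bags, and refining a complete bag keeps bags complete); and every split decomposition of a star has all bags stars with every marked edge of type $S_pS_c$ (a simple decomposition of a star yields two stars, with the new marked vertex a leaf on the side containing the old center and the center on the other side, and this shape is preserved by further refinement). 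Consequently $D_{B'}$, being nontrivial, contains a marked edge of type $KK$ if $B'$ is complete and of type $S_pS_c$ if $B'$ is a star; refinements of the other bags of $D'$ leave this marked edge untouched, so it is a marked edge of $D$.

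The main obstacle is the structural claim in the backward direction: that nontrivial split decompositions of complete graphs, resp. of stars, are forced to carry a $KK$-, resp. $S_pS_c$-, marked edge. This hinges on the observation that the bag types $K$ and $S$ are stable under further refinement together with a precise description of a simple decomposition of a complete graph or of a star — both of which unwind directly from the definition of a split and of a simple decomposition. The remaining points — that recomposition yields a split decomposition of the same graph, that the split used to recover $D$ from $D'$ is legitimate, and that refining other bags does not disturb a fixed marked edge — are routine bookkeeping.
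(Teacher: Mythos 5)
The paper does not prove this statement; it is imported verbatim from Bouchet's 1988 paper (Theorem~\ref{thm:can-forbid} is stated with a citation only), so there is no in-paper proof to compare against. Judged on its own, your argument is correct and complete in outline. The forward direction is right: for a marked edge $uv$ of type $KK$ or $S_pS_c$, the absence of common neighbours (because $uv$ is a cut-edge) means the pivot simply joins $N_D(u)\setminus\{v\}$ completely to $N_D(v)\setminus\{u\}$, and your case analysis of the merged bag $B^*$ (complete, resp.\ a star centred at the old centre of the $S_p$-side) is accurate; the check that the recovered split is genuine is also sound, since every bag containing a marked vertex arises as $G[X]\cup\{x'\}$ with $|X|\ge 2$ and hence has at least three vertices. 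The backward direction rests on the two structural facts you isolate, and both hold; the one point that genuinely needs the extra care you allude to is that the $S_pS_c$ (resp.\ $KK$) marked edge produced by the \emph{first} refinement step of the bag $B'$ keeps its type after all subsequent refinement steps. For stars this follows because in any simple decomposition of a star the old centre has degree at least two in its new bag and so remains a centre, while old leaves remain leaves; for complete graphs all bags stay complete. Granting the standard bookkeeping facts you flag (recomposition yields a split decomposition of the same graph, and the refinement of $D'$ into $D$ restricts to an independent split decomposition of each bag of $D'$), the proof is valid.
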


We will use the following characterization of distance-hereditary graphs.

\begin{theorem}[Bocuhet \cite{Bouchet88}]\label{thm:Bouchet88} A connected graph is distance-hereditary if and only if each bag of its canonical decomposition is of type K or S. 
\end{theorem}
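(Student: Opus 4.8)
The plan is to pass through rank-width. By Theorem~\ref{thm:rankwidth1}, ``distance-hereditary'' means ``rank-width at most $1$'', so I would establish the following local-to-global statement, from which the theorem is immediate: for every split decomposition $D$ of a connected graph $G$ one has $\rw(G)=\max\{\rw(B)\mid B \text{ a bag of }D\}$. Granting this, if every bag of the canonical decomposition of $G$ has type $K$ or $S$, then each bag has rank-width at most $1$ (a direct inspection shows that every cut of a complete graph, resp.\ of a star, has cut-rank at most $1$), so the statement gives $\rw(G)\le 1$ and $G$ is distance-hereditary. Conversely, if $G$ is distance-hereditary then $\rw(G)\le 1$, so every bag $B$ of the canonical decomposition satisfies $\rw(B)\le 1$; by Theorem~\ref{thm:can-forbid} the possible types are $P$, $K$, $S$, and it remains only to exclude type $P$.

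To exclude type $P$ I would use the side lemma: a connected graph $H$ with $\rw(H)\le 1$ and $\abs{V(H)}\ge 4$ has a split. Given an optimal rank-decomposition $(T,L)$ of $H$, the subcubic tree $T$ has at least $4$ leaves, hence at least two internal vertices; since the internal vertices induce a connected subgraph, $T$ has an edge $e$ with both endpoints internal. Then $e$ induces a bipartition $(X_e,Y_e)$ of the leaves with $\abs{X_e},\abs{Y_e}\ge 2$, so $(L^{-1}(X_e),L^{-1}(Y_e))$ is a vertex bipartition of $H$ into parts of size at least $2$ with cut-rank at most $1$, and this cut-rank is nonzero since $H$ is connected, hence equals $1$; so it is a split. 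Since every connected graph on at most $3$ vertices is a complete graph or a star, a type-$P$ bag is a prime graph on at least $4$ vertices, and the side lemma contradicts primeness.

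For the local-to-global statement, induction on the number of bags reduces it to one split step: if $(X,Y)$ is a split of $G$ with simple-decomposition halves $G_1$ on $X\cup\{x'\}$ and $G_2$ on $Y\cup\{y'\}$, I must show $\rw(G)=\max(\rw(G_1),\rw(G_2))$. For ``$\ge$'', picking any vertex $y^*$ of $Y$ with a neighbour in $X$, the split condition forces $N_G(y^*)\cap X=X'$, so $G_1$ is isomorphic to the induced subgraph $G[X\cup\{y^*\}]$ and hence is a vertex-minor of $G$ (symmetrically for $G_2$); since vertex-minors do not increase rank-width (Lemma~\ref{lem:vm-rw} together with restriction of rank-decompositions for deletions), $\rw(G_i)\le\rw(G)$. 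For ``$\le$'', take optimal rank-decompositions of $G_1$ and $G_2$, delete the marker leaves $x'$ and $y'$, and join their former neighbours by a new edge; an edge inherited from the $G_1$-side separates some $A\subseteq X$ from the rest of $G$ and has the same width as in $G_1$, because the columns indexed by $Y$ contribute exactly the one column that the marker $x'$ contributed, and the single new edge realizes the split $(X,Y)$, hence has width $\cutrk_G(X)=1\le\max(\rw(G_1),\rw(G_2))$.

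I expect the ``$\le$'' half of the local-to-global statement to be the real work: the gluing is one line, but checking that no cut of the glued rank-decomposition exceeds $\max(\rw(G_1),\rw(G_2))$ forces one to match cuts of $G$ with cuts of $G_1$ or $G_2$ across the split, which is exactly where the bookkeeping of marker vertices must be done carefully; one should also verify the harmless point that the two sides of a marked edge of a split decomposition are recovered as simple-decomposition halves. Everything else — the $K$/$S$ cut-rank inspection, the side lemma, and the appeals to Theorems~\ref{thm:rankwidth1} and~\ref{thm:can-forbid} — is routine.
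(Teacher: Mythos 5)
The paper offers no proof of this statement at all: it is imported verbatim from Bouchet's paper, so your proposal is not a variant of the paper's argument but a genuinely independent derivation. Your route --- reduce ``distance-hereditary'' to ``$\rw\le 1$'' via Theorem~\ref{thm:rankwidth1}, prove that rank-width is invariant under split decomposition ($\rw(G)=\max_B\rw(B)$ over the bags), observe that complete graphs and stars have rank-width $1$, and exclude type-$P$ bags by extracting a split from an internal edge of a width-$1$ rank-decomposition --- is correct, and all the key computations check out: the identification of a simple-decomposition half $G_1$ with the induced subgraph $G[X\cup\{y^*\}]$, the column-space argument showing that the $Y$-columns contribute exactly the marker column in the glued decomposition, and the counting argument giving two adjacent internal vertices in a subcubic tree with at least four leaves. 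Bouchet's own proof is purely combinatorial (split decompositions plus the Bandelt--Mulder pendant/twin structure of distance-hereditary graphs, with no rank function), so what your approach buys is a short uniform argument at the cost of importing Theorem~\ref{thm:rankwidth1} and the split-invariance of rank-width, both nontrivial results of Oum; you should add one sentence confirming that the proof of Theorem~\ref{thm:rankwidth1} does not itself pass through Bouchet's theorem (Oum's argument goes via forbidden induced subgraphs and twins, so there is no circularity). Three small repairs: Lemma~\ref{lem:vm-rw} is stated for cut-rank and \emph{linear} rank-width, so you need to invoke (or prove) the standard rank-width analogue for induced subgraphs; you should state explicitly that every bag of a split decomposition of a connected graph is connected, since this is what licenses both your side lemma and the inequality $\rw(G_i)\ge 1$ that absorbs the width-$1$ gluing edge; and the appeal to Theorem~\ref{thm:can-forbid} is unnecessary --- that each bag of a canonical decomposition is prime, complete, or a star is part of its definition.
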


We now relate the split decompositions of a graph and the ones of its locally equivalent graphs. Let $D$ be a split decomposition of a connected graph.  A vertex $v$ of $D$ \emph{represents} an unmarked vertex $x$ (or is a
\emph{representative} of $x$) if either $v=x$ or there is a path of even length from $v$ to $x$ in $D$ starting with a marked edge such that marked edges and unmarked edges appear alternately in the
path.  Two unmarked vertices $x$ and $y$ are \emph{linked} in $D$ if there is a path from $x$ to $y$ in $D$ such that unmarked edges and marked edges appear alternately in the path.
    
\begin{lemma}\label{lem:represent} Let $D$ be a split decomposition of a connected graph.  Let $v'$ and $w'$ be two vertices in a same bag of $D$, and let $v$ and $w$ be two unmarked vertices of $D$ represented by
  $v'$ and $w'$, respectively.  The following are equivalent.
 \begin{enumerate}
 \item $v$ and $w$ are linked in $D$.
 \item $vw\in E(\origin{D})$.
 \item $v'w' \in E(D)$.
 \end{enumerate}
\end{lemma}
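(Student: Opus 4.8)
The strategy is to prove the cycle of implications $(1)\Rightarrow(2)\Rightarrow(3)\Rightarrow(1)$, working by induction on the number of marked edges of $D$ (equivalently, on the number of bags), using the recomposition operation to reduce to a split decomposition with one fewer marked edge. The base case is $D=G$ with no marked edges, where $v'=v$, $w'=w$, all three statements say $vw\in E(G)$, and there is nothing to prove. For the inductive step, the idea is to pick a marked edge $e=ab$ of $D$ that lies ``far'' from the bag $B$ containing $v'$ and $w'$ — concretely, a marked edge incident to a leaf bag of $T_D$ other than $B$ (if $B$ is itself a leaf bag, pick any marked edge not incident to $B$, which exists unless $D$ has at most two bags, a case handled directly) — and form $D' := (D\wedge ab)\setminus\{a,b\}$, which is again a split decomposition of $G$ with one fewer marked edge, and in which $v'$, $w'$ still lie in a common bag (the bag $B$ is untouched by a recomposition along a far edge) and still represent $v$, $w$ respectively.

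The heart of the argument is the following bookkeeping fact about recomposition: for the chosen far marked edge $e=ab$, the representation relation and the alternating-path (``linked'') relation in $D$ correspond exactly to those in $D'$. Pivoting along $ab$ swaps $a$ and $b$ and complements adjacencies among the three parts $W_1,W_2,W_3$ of the bipartition around the cut-edge; after deleting $a$ and $b$, what remains is that an alternating path in $D$ passing through the edge $ab$ becomes a shorter alternating path in $D'$ through the (new) unmarked edges created by the pivot, and conversely. Since $v'$ and $w'$ are in a bag $B$ disjoint from the manipulated region, the equivalence of $(3)$ for $D$ and $(3)$ for $D'$ is immediate ($E(D)$ and $E(D')$ agree on $B$), while the equivalences of $(1)$ and $(2)$ reduce to this correspondence of alternating paths and the fact that $\origin{D}=\origin{D'}=G$. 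Then the inductive hypothesis applied to $D'$ closes the loop.

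For the direct implications within a single step, the cleanest route is: $(3)\Rightarrow(1)$ is trivial since an unmarked edge $v'w'$ together with the marked edges on the (even-length, alternating, marked-edge-first) representation paths from $v'$ to $v$ and from $w'$ to $w$ concatenates into an alternating $v$–$w$ path — one only needs to check that these three pieces meet correctly at $v'$ and $w'$ (the representation path ends at $v'$ with an unmarked edge, so gluing the unmarked edge $v'w'$ in the middle keeps the alternation), which is a routine parity check. For $(1)\Rightarrow(2)$ and $(2)\Rightarrow(3)$, rather than arguing directly I would lean on recomposition: recomposing all marked edges turns $D$ into $G$, turns the pair $(v',w')$ in a common bag into the pair $(v,w)$, and an alternating $v$–$w$ path collapses to the single edge $vw$; formalizing ``collapses'' is exactly the inductive correspondence above.

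**Main obstacle.** The technical crux — and the step I expect to be most delicate — is verifying that recomposition along the far marked edge $ab$ really does preserve both the ``represents'' relation and the ``linked'' relation for the specific vertices $v,w,v',w'$ in play, i.e.\ that no alternating path is created or destroyed except by the expected local surgery near $ab$. This requires a careful case analysis of how an alternating path interacts with the cut-edge $ab$: it either avoids the region entirely, or enters and leaves through the new unmarked edges produced by $G\wedge ab$. One must also confirm that ``far'' marked edges exist in every non-degenerate case and handle the two-bag and three-bag base cases by hand (where a direct check against Theorem~\ref{thm:can-forbid}-style bag structure, or just the definitions of star/complete/prime bags, suffices). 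Everything else is parity arithmetic on path lengths.
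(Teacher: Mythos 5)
Your overall strategy is the same one the paper uses (the paper's proof is only a two-line sketch): tie statement (3) to statement (1) via the definition of representativity, and tie (1) to (2) by the fact that recomposing a marked edge preserves both the ``linked'' relation and the graph $\origin{D}$. Your $(3)\Rightarrow(1)$ concatenation argument is correct. The problem is in the inductive step for $(1)\Rightarrow(2)$ and especially $(2)\Rightarrow(3)$, where you need to recompose a marked edge while keeping $B$, $v'$, $w'$ intact: the ``far'' marked edge you rely on need not exist. A marked edge incident to a leaf bag other than $B$ can perfectly well have its other endpoint inside $B$ (the leaf bag may be adjacent to $B$), and your fallback claim that a marked edge not incident to $B$ ``exists unless $D$ has at most two bags'' is false --- if $T_D$ is a star of bags centered at $B$, every marked edge of $D$ is incident to $B$, no matter how many bags there are. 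In such configurations your induction cannot be applied as stated, since the recomposition merges $B$ with a neighbour and, if the recomposed edge is $v'b$ or $w'b$, even deletes the vertex $v'$ or $w'$ that statement (3) refers to.

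The gap is repairable in two ways, and it is worth seeing why. First, what actually needs protecting is not the bag $B$ but only the two vertices $v'$ and $w'$: if $ab$ is a marked edge with $a\in V(B)\setminus\{v',w'\}$, the pivot only complements adjacencies between $N_D(a)\setminus\{b\}$ and $N_D(b)\setminus\{a\}$, so the edge $v'w'$ and the representation paths from $v'$ to $v$ and from $w'$ to $w$ (which leave $B$ through marked edges at $v'$, $w'$, hence through bags other than the one containing $b$) survive unchanged; since marked edges form a matching, a marked edge avoiding $\{v',w'\}$ exists whenever $D$ has at least three marked edges, leaving only a handful of small configurations to check by hand with the pivot formula (tracking the replacement representative when $v'$ itself is deleted). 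Second, and cleaner, you can decouple (3) from the induction entirely: prove $(1)\Leftrightarrow(3)$ directly from the definition of representativity (your concatenation argument gives $(3)\Rightarrow(1)$; the converse is the observation that an alternating $v$--$w$ walk must cross $B$ through an unmarked edge joining a representative of $v$ to a representative of $w$), and then the recomposition step only has to preserve ``linked'' and $\origin{D}$, which it does for \emph{any} marked edge with no case analysis on its position relative to $B$. That second route is exactly the paper's.
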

\begin{proof}
It is not hard to show that $v'$ and $w'$ are adjacent in $D$ 
if and only if there is an alternating path from $v$ to $w$ in $D$ from the definition of representativity.
Note that recomposing a marked edge in a split decomposition does not change the property that two unmarked vertices are linked, and the adjacency of two vertices in $\origin{D}$. It implies that $v$ and $w$ are linked in $D$ if and only if $vw\in E(\origin{D})$.
\end{proof}

A \emph{local complementation} at an unmarked vertex $x$ in a split decomposition $D$, denoted by $D*x$, is the operation to replace each bag $B$ containing a representative $w$ of $x$
with $B*w$. Observe that $D*x$ is a split decomposition of $\origin{D}*x$, and $M(D) = M(D*x)$.
Two split decompositions $D$ and $D'$ are \emph{locally equivalent}
if $D$ can be obtained from $D'$ by applying a sequence of local complementations at unmarked vertices.
	
\begin{lemma}[Bouchet \cite{Bouchet88}]\label{lem:localdecom} Let $D$ be the canonical decomposition of a connected graph. If $x$ is an unmarked vertex of $D$, then $D*x$ is the canonical decomposition of $\origin{D}*x$.
\end{lemma}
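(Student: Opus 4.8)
The plan is to show that $D*x$ is \emph{a} canonical decomposition of $\origin{D}*x$ and then conclude with the uniqueness statement of Theorem~\ref{thm:CED}. Since we already know that $D*x$ is a split decomposition of $\origin{D}*x$ with $M(D*x)=M(D)$, Theorem~\ref{thm:can-forbid} reduces the task to verifying that (i) every bag of $D*x$ is a prime, a complete graph, or a star, and (ii) $D*x$ has no marked edge of type $KK$ or $S_pS_c$.

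Part (i) I would dispose of quickly: an affected bag $B$ (one containing a representative $w$ of $x$) is replaced by $B*w$, and a complete graph becomes a star with center $w$, a star becomes a complete graph when $w$ is its center and is unchanged when $w$ is a leaf, while a prime bag becomes a graph with the same cut-rank function by Lemma~\ref{lem:vm-rw}, hence with the same splits, hence again prime; since the number of vertices of each bag is unchanged there is no degeneracy.

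For part (ii) I would fix a marked edge $uv$ of $D*x$ (equivalently of $D$), say with $u$ in bag $B_1$ and $v$ in bag $B_2$, and let $AB$ be its type in $D$, so that $AB\notin\{KK,S_pS_c\}$ since $D$ is canonical. The crucial book-keeping observation is that along any marked edge of $D$ at most one endpoint is a representative of $x$, and, writing $B_x$ for the bag containing $x$: the bag among $B_1,B_2$ farther from $B_x$ in $T_D$ is affected exactly when its endpoint of $uv$ is itself a representative of $x$, whereas the bag closer to $B_x$ is affected exactly when its endpoint of $uv$ is joined by an unmarked edge to a representative of $x$ inside it. I would combine this with the effect of a local complementation $B*w$ on the role of a vertex $z$ of $B$: if $z=w$ the role changes by $K\mapsto S_c$, $S_c\mapsto K$, $S_p\mapsto S_p$, $P\mapsto P$; if $z\neq w$ and $zw\in E(B)$ it changes by $K\mapsto S_p$, $S_c\mapsto S_c$, $S_p\mapsto K$, $P\mapsto P$; and if $z\neq w$ and $zw\notin E(B)$ it is unchanged (and then $B$ is not complete). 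This yields three cases: neither $B_1$ nor $B_2$ is affected, so $AB$ is unchanged; exactly one of $B_1,B_2$ is affected, where its representative of $x$ is distinct from and non-adjacent to the relevant endpoint, so $AB$ is again unchanged; and both are affected, where one endpoint equals the representative of its bag and the other is adjacent to the representative of its bag, so $A$ and $B$ each transform by one of the two rules above. A short finite check over all pairs $AB\notin\{KK,S_pS_c\}$ then shows that the transformed type is again not $KK$ or $S_pS_c$, precisely because the original type was not.

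Once (i) and (ii) are in place, Theorem~\ref{thm:can-forbid} says $D*x$ is a canonical decomposition of $\origin{D}*x$, and the uniqueness in Theorem~\ref{thm:CED} upgrades this to \emph{the} canonical decomposition. I expect the main obstacle to be the book-keeping in (ii): one must argue carefully, from the position of $x$ in $T_D$, which vertex of each affected bag is the representative of $x$ relevant to $uv$, so that the correct transition rule applies; granting that, the remaining verification is a mechanical case analysis showing that a non-forbidden edge type can only become a non-forbidden edge type.
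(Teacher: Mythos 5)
The paper offers no proof of this lemma at all --- it is imported from Bouchet \cite{Bouchet88} --- so there is no internal argument to compare yours against; what matters is whether your self-contained verification stands, and it essentially does. Reducing canonicity to Theorem~\ref{thm:can-forbid} together with the uniqueness in Theorem~\ref{thm:CED} is the right frame; your bag-by-bag check in (i) is fine (primality is preserved because local complementation preserves the cut-rank function and connectivity, and the bag sizes do not change); and your bookkeeping in (ii) rests on two facts that are both true but that you should state with a one-line proof: each bag contains at most one representative of $x$ (an alternating path from a representative must leave its bag through the marked edge pointing toward the bag containing $x$ and, the bags forming a tree, cannot return), and consequently at most one end of a marked edge represents $x$, namely the end lying in the farther bag. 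Your transition tables for the role of a vertex under $B\mapsto B*w$ are correct, and the concluding finite check does close: the permutation $K\mapsto S_p$, $S_p\mapsto K$, $S_c\mapsto S_c$ applied at the near end together with $K\mapsto S_c$, $S_c\mapsto K$, $S_p\mapsto S_p$ at the far end maps the forbidden set $\{KK,\,S_pS_c\}$ bijectively onto itself, so a legal edge type stays legal.

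One sentence is misstated, although your case analysis quietly corrects it: ``the bag closer to $B_x$ is affected exactly when its endpoint of $uv$ is joined by an unmarked edge to a representative of $x$ inside it'' is in fact the criterion for the \emph{farther} bag to be affected; the nearer bag is affected exactly when it contains a representative of $x$ at all, adjacent to $u$ or not. Read literally, your sentence would force ``both bags or neither'' and make your second case vacuous, yet that case genuinely occurs (only the nearer bag affected, its representative non-adjacent to the relevant endpoint, edge type unchanged) and you describe it correctly when you reach it. This is a wording slip rather than a gap; with that sentence repaired and the uniqueness of the representative made explicit, the proof is complete.
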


\begin{remark}\label{rem:localdecom} If $D$ is a canonical decomposition and $D'=D*x$ for some unmarked vertex $v$ of $D$, then $T_{D'}$ and $T_D$ are isomorphic because $M(D)=M(D')$. 
Thus, for every node $v$ of $T_D$ associated with a bag $B$ of $D$, 
its corresponding node $v'$ in $T_{D'}$ is associated in $D'$ with either 
\begin{enumerate}
\item $B$ if $x$ has no representative in $B$, or 
\item $B*w$ if $B$ has a representative $w$ of $v$. 
\end{enumerate}
  For easier arguments in several places, if $T_D$ is given for $D$, then we assume that $T_{D'}=T_D$ for every
  split decomposition $D'$ locally equivalent to $D$. 
  For a canonical decomposition $D$ and a node $v$ of its decomposition tree, we write $\bag{D}{v}$ to
  denote the bag of $D$ with which it is in correspondence.
\end{remark}

Let $x$ and $y$ be linked unmarked vertices in a split decomposition $D$, and let $P$ be the alternating path in $D$ linking $x$ and $y$.  Observe that each bag contains at most one unmarked edge in $P$.  Notice also
that if $B$ is a bag of type $S$ containing an unmarked edge of $P$, then the center of $B$ is a representative of either $x$ or $y$.  The \emph{pivoting on $xy$ of $D$}, denoted by $D\wedge xy$, is
the split decomposition obtained as follows: for each bag $B$ containing an unmarked edge of $P$, if $v, w\in V(B)$ represent respectively $x$ and $y$ in $D$, then we replace $B$ with $B\wedge vw$. (It is
worth noticing that by Lemma~\ref{lem:represent}, we have $vw\in E(B)$, hence $B\wedge vw$ is well-defined.)

\begin{lemma}\label{lem:pivotdecom} 
	Let $D$ be a split decomposition of a connected graph. 
	If $xy\in E(\origin{D})$, then $D\wedge xy=D*x*y*x$.  
\end{lemma}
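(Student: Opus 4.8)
The plan is to prove that the two marked graphs $D\wedge xy$ and $D*x*y*x$ are literally equal, by comparing them bag by bag. First I would note that both operations fix the matching of marked edges (pivoting a bag along an alternating path only alters edges inside that bag, and $M(D)=M(D*x)$), so $D\wedge xy$ and $D*x*y*x$ both have $T_D$ as decomposition tree, and for every node $v$ the bag sitting at $v$ in either of them has the same vertex set as $\bag{D}{v}$ (by Remark~\ref{rem:localdecom} for the second, by construction for the first). Hence it suffices to check, for each $v$, that the two bags at $v$ coincide. Write the alternating path linking $x$ and $y$ (which are linked since $xy\in E(\origin{D})$) as $x=p_0,p_1,\dots,p_{2m+1}=y$, where the unmarked ``$P$-edges'' $p_{2i}p_{2i+1}$ each lie in one bag $B_i$ (with $p_{2i}p_{2i+1}\in E(B_i)$ by Lemma~\ref{lem:represent}) and $p_{2i-1}p_{2i}$ are marked; so $B_0,\dots,B_m$ are exactly the nodes of $T_D$ on $P$. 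I would then treat the nodes on $P$ and those off $P$ separately.

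A key preliminary remark is that none of the three local complementations in $D*x*y*x$ ever flips a $P$-edge: from the definition of representatives, and using that $T_D$ is a tree (an alternating path leaving a bag cannot return), each step locally complements every path bag $B_j$ --- at $u_j:=p_{2j}$ for the two $*x$ steps and at $w_j:=p_{2j+1}$ for the $*y$ step --- and neither touches the edge $u_jw_j$, which is incident to both, while off-path bags are never $B_j$. So every $P$-edge stays present, and with this one obtains a clean ``chain characterization'' of representatives: in each of $D$, $D*x$, $D*x*y$, a bag contains a representative of an unmarked vertex $z$ iff a certain conjunction of within-bag adjacencies holds along the tree-path towards the bag of $z$, in which case the representative is the marked vertex on the first edge of that path. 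In particular every path bag $B_i$ always has representatives $u_i$ (of $x$) and $w_i$ (of $y$), so the three steps restrict on $B_i$ to $B_i\mapsto B_i*u_i\mapsto B_i*u_i*w_i\mapsto B_i*u_i*w_i*u_i=B_i\wedge u_iw_i$, which is precisely the bag placed at $B_i$ by $D\wedge xy$.

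For a node $v\notin P$ with bag $C=\bag{D}{v}$, I want to show $D*x*y*x$ leaves $C$ unchanged (as $D\wedge xy$ does by definition). Since $v\notin P$, the tree-paths from $v$ to $B_0$ and from $v$ to $B_m$ begin with the same edge, so the only candidate representative of $x$ in $C$ and the only candidate representative of $y$ in $C$ are the same marked vertex $q$; hence every time $C$ is modified it is modified by $*q$, and it is enough that $C$ be modified an even number of times. By the chain characterization, the conjunction defining ``$C$ has a representative of $x$'' (resp.\ of $y$) is invariant under the three steps except for one bit: the adjacency, inside the junction bag $B_\ell$ where $v$'s branch meets $P$, between the marked vertex of $B_\ell$ on the path to $v$ and the one on the path to $B_0$, namely $p_{2\ell}$ (resp.\ to $B_m$, namely $p_{2\ell+1}$). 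Tracking how these two bits evolve under the complementations $*p_{2\ell},*p_{2\ell+1},*p_{2\ell}$ performed inside $B_\ell$ --- here the hypothesis $xy\in E(\origin{D})$, i.e.\ $p_{2\ell}p_{2\ell+1}\in E(B_\ell)$, fixes the flip pattern --- I would conclude that the three indicators ``$C$ has a representative of $x$ in $D$'', ``$C$ has a representative of $y$ in $D*x$'' and ``$C$ has a representative of $x$ in $D*x*y$'' always sum to $0$ or $2$. Hence $C$ is unchanged by $D*x*y*x$, finishing the comparison.

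The hard part is this last parity bookkeeping for off-path bags: it requires being scrupulous about which bags get locally complemented at each step and about how the two relevant adjacencies in the junction bag flip. An alternative that avoids it would be a rigidity statement --- a split decomposition of a connected graph is determined by its decomposition tree (with bags labelled by their vertex sets) together with its recomposition $\origin{\cdot}$, because the marked edges correspond to a laminar family of splits recoverable from the tree and refinement along a split is canonical --- after which one would only need that $D\wedge xy$ and $D*x*y*x$ have the same tree, the same bag vertex sets, and the same recomposition (using $\origin{D}\wedge xy=\origin{D}*x*y*x$); but proving that rigidity, together with $\origin{D\wedge xy}=\origin{D}\wedge xy$, takes a comparable amount of work.
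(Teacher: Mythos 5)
Your proposal is correct and follows the same route as the paper, which simply asserts that under $D*x*y*x$ ``only the bags in the path from $x$ to $y$ are modified, and they are modified according to the definition of $D\wedge xy$'' and points to Figure~\ref{fig:splitdecompositionpivoting}. Your write-up supplies the details the paper omits --- in particular the parity argument showing that an off-path bag, whose unique candidate representative of $x$ and of $y$ is the same marked vertex $q$, is complemented at $q$ an even number of times (the check that $\alpha+(\alpha\oplus\beta)+\beta\equiv 0 \pmod 2$ for the two junction-bag adjacencies goes through) --- and is sound.
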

\begin{proof} 
Since $xy\in E(\origin{D})$, by Lemma~\ref{lem:represent}, $x$ and $y$ are linked in $D$. It is easy to see that by the operation $D*x*y*x$, only
  the bags in the path from $x$ to $y$ are modified, and they are modified according to the definition of $D\wedge xy$. See Figure~\ref{fig:splitdecompositionpivoting} for an example of this procedure.
\end{proof}

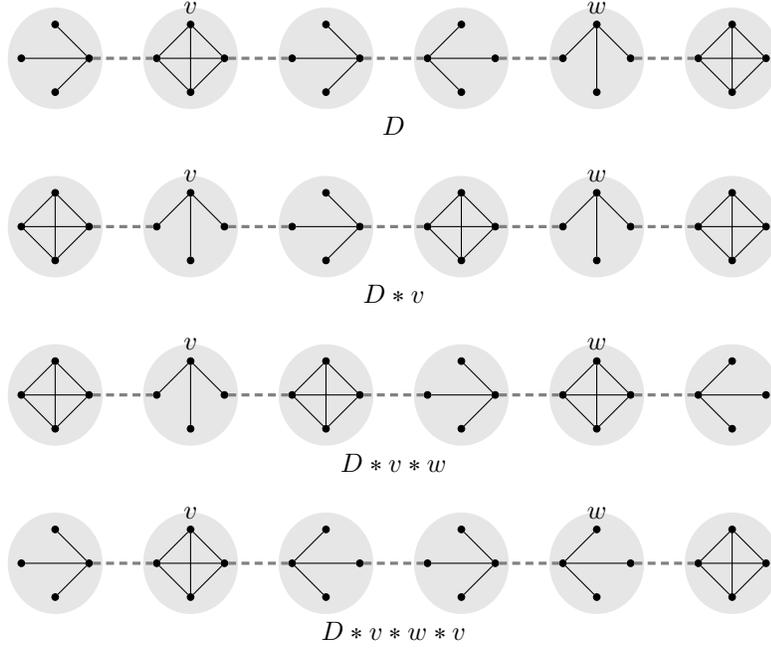
\begin{figure}[t]\centering
\tikzstyle{v}=[circle, draw, solid, fill=black, inner sep=0pt, minimum width=2.5pt]
 \tikzset{
    photon/.style={decorate,  draw=gray, very thick, , densely dashed},
}

\begin{tikzpicture}[scale=0.045]
\draw (30+60,20) node{$D$};

\path [fill=gray!20] (30-40,40) ellipse (14 and 15);
\node(a1) [v] at (30-40,50) {};
\node(a2) [v] at (30-40,30) {};
\node(a3) [v] at (20-40,40) {};
\node(a4) [v] at (40-40,40) {};
\draw(a4)--(a1);
\draw(a4)--(a2);
\draw(a4)--(a3);
\draw(a4)--(a4);
\path [fill=gray!20] (30,40) ellipse (14 and 15);

\node(b1) [v] at (30,50) {};
\node(b2) [v] at (30,30) {};
\node(b3) [v] at (20,40) {};
\node(b4) [v] at (40,40) {};
\draw(b1)--(b2);
\draw(b1)--(b3);
\draw(b1)--(b4);
\draw(b2)--(b3);
\draw(b2)--(b4);
\draw(b3)--(b4);
\draw (30 ,55) node{$v$};

\path [fill=gray!20] (30+40,40) ellipse (14 and 15);

\node(c1) [v] at (30+40,50) {};
\node(c2) [v] at (30+40,30) {};
\node(c3) [v] at (20+40,40) {};
\node(c4) [v] at (40+40,40) {};
\draw(c4)--(c1);
\draw(c4)--(c2);
\draw(c4)--(c3);
\draw(c4)--(c4);
\path [fill=gray!20] (30+80,40) ellipse (14 and 15);

\node(d1) [v] at (30+80,50) {};
\node(d2) [v] at (30+80,30) {};
\node(d3) [v] at (20+80,40) {};
\node(d4) [v] at (40+80,40) {};
\draw(d3)--(d1);
\draw(d3)--(d2);
\draw(d3)--(d3);
\draw(d3)--(d4);
\path [fill=gray!20] (30+120,40) ellipse (14 and 15);

\node(e1) [v] at (30+120,50) {};
\node(e2) [v] at (30+120,30) {};
\node(e3) [v] at (20+120,40) {};
\node(e4) [v] at (40+120,40) {};
\draw(e1)--(e1);
\draw(e1)--(e2);
\draw(e1)--(e3);
\draw(e1)--(e4);
\draw (30+120 ,55) node{$w$};

\path [fill=gray!20] (30+160,40) ellipse (14 and 15);

\node(f1) [v] at (30+160,50) {};
\node(f2) [v] at (30+160,30) {};
\node(f3) [v] at (20+160,40) {};
\node(f4) [v] at (40+160,40) {};
\draw(f1)--(f2);
\draw(f1)--(f3);
\draw(f1)--(f4);
\draw(f2)--(f3);
\draw(f2)--(f4);
\draw(f3)--(f4);


\draw[photon] (a4)--(b3);
\draw[photon] (b4)--(c3);
\draw[photon] (c4)--(d3);
\draw[photon] (d4)--(e3);
\draw[photon] (e4)--(f3);

\end{tikzpicture}
\vskip 0.2cm
\begin{tikzpicture}[scale=0.045]
\draw (30+60,20) node{$D*v$};

\path [fill=gray!20] (30-40,40) ellipse (14 and 15);
\node(a1) [v] at (30-40,50) {};
\node(a2) [v] at (30-40,30) {};
\node(a3) [v] at (20-40,40) {};
\node(a4) [v] at (40-40,40) {};
\draw(a1)--(a2);
\draw(a1)--(a3);
\draw(a1)--(a4);
\draw(a2)--(a3);
\draw(a2)--(a4);
\draw(a3)--(a4);

\path [fill=gray!20] (30,40) ellipse (14 and 15);

\node(b1) [v] at (30,50) {};
\node(b2) [v] at (30,30) {};
\node(b3) [v] at (20,40) {};
\node(b4) [v] at (40,40) {};
\draw(b1)--(b1);
\draw(b1)--(b2);
\draw(b1)--(b3);
\draw(b1)--(b4);

\draw (30 ,55) node{$v$};

\path [fill=gray!20] (30+40,40) ellipse (14 and 15);

\node(c1) [v] at (30+40,50) {};
\node(c2) [v] at (30+40,30) {};
\node(c3) [v] at (20+40,40) {};
\node(c4) [v] at (40+40,40) {};
\draw(c4)--(c1);
\draw(c4)--(c2);
\draw(c4)--(c3);
\draw(c4)--(c4);
\path [fill=gray!20] (30+80,40) ellipse (14 and 15);

\node(d1) [v] at (30+80,50) {};
\node(d2) [v] at (30+80,30) {};
\node(d3) [v] at (20+80,40) {};
\node(d4) [v] at (40+80,40) {};
\draw(d1)--(d2);
\draw(d1)--(d3);
\draw(d1)--(d4);
\draw(d2)--(d3);
\draw(d2)--(d4);
\draw(d3)--(d4);

\path [fill=gray!20] (30+120,40) ellipse (14 and 15);

\node(e1) [v] at (30+120,50) {};
\node(e2) [v] at (30+120,30) {};
\node(e3) [v] at (20+120,40) {};
\node(e4) [v] at (40+120,40) {};
\draw(e1)--(e1);
\draw(e1)--(e2);
\draw(e1)--(e3);
\draw(e1)--(e4);
\draw (30+120 ,55) node{$w$};

\path [fill=gray!20] (30+160,40) ellipse (14 and 15);

\node(f1) [v] at (30+160,50) {};
\node(f2) [v] at (30+160,30) {};
\node(f3) [v] at (20+160,40) {};
\node(f4) [v] at (40+160,40) {};
\draw(f1)--(f2);
\draw(f1)--(f3);
\draw(f1)--(f4);
\draw(f2)--(f3);
\draw(f2)--(f4);
\draw(f3)--(f4);


\draw[photon] (a4)--(b3);
\draw[photon] (b4)--(c3);
\draw[photon] (c4)--(d3);
\draw[photon] (d4)--(e3);
\draw[photon] (e4)--(f3);

\end{tikzpicture}
\vskip 0.2cm
\begin{tikzpicture}[scale=0.045]
\draw (30+60,20) node{$D*v*w$};

\path [fill=gray!20] (30-40,40) ellipse (14 and 15);
\node(a1) [v] at (30-40,50) {};
\node(a2) [v] at (30-40,30) {};
\node(a3) [v] at (20-40,40) {};
\node(a4) [v] at (40-40,40) {};
\draw(a1)--(a2);
\draw(a1)--(a3);
\draw(a1)--(a4);
\draw(a2)--(a3);
\draw(a2)--(a4);
\draw(a3)--(a4);
\path [fill=gray!20] (30,40) ellipse (14 and 15);

\node(b1) [v] at (30,50) {};
\node(b2) [v] at (30,30) {};
\node(b3) [v] at (20,40) {};
\node(b4) [v] at (40,40) {};
\draw(b1)--(b1);
\draw(b1)--(b2);
\draw(b1)--(b3);
\draw(b1)--(b4);
\draw (30 ,55) node{$v$};

\path [fill=gray!20] (30+40,40) ellipse (14 and 15);

\node(c1) [v] at (30+40,50) {};
\node(c2) [v] at (30+40,30) {};
\node(c3) [v] at (20+40,40) {};
\node(c4) [v] at (40+40,40) {};
\draw(c1)--(c2);
\draw(c1)--(c3);
\draw(c1)--(c4);
\draw(c2)--(c3);
\draw(c2)--(c4);
\draw(c3)--(c4);
\path [fill=gray!20] (30+80,40) ellipse (14 and 15);

\node(d1) [v] at (30+80,50) {};
\node(d2) [v] at (30+80,30) {};
\node(d3) [v] at (20+80,40) {};
\node(d4) [v] at (40+80,40) {};
\draw(d4)--(d1);
\draw(d4)--(d2);
\draw(d4)--(d3);
\draw(d4)--(d4);
\path [fill=gray!20] (30+120,40) ellipse (14 and 15);

\node(e1) [v] at (30+120,50) {};
\node(e2) [v] at (30+120,30) {};
\node(e3) [v] at (20+120,40) {};
\node(e4) [v] at (40+120,40) {};
\draw(e1)--(e2);
\draw(e1)--(e3);
\draw(e1)--(e4);
\draw(e2)--(e3);
\draw(e2)--(e4);
\draw(e3)--(e4);
\draw (30+120 ,55) node{$w$};

\path [fill=gray!20] (30+160,40) ellipse (14 and 15);

\node(f1) [v] at (30+160,50) {};
\node(f2) [v] at (30+160,30) {};
\node(f3) [v] at (20+160,40) {};
\node(f4) [v] at (40+160,40) {};
\draw(f3)--(f1);
\draw(f3)--(f2);
\draw(f3)--(f3);
\draw(f3)--(f4);


\draw[photon] (a4)--(b3);
\draw[photon] (b4)--(c3);
\draw[photon] (c4)--(d3);
\draw[photon] (d4)--(e3);
\draw[photon] (e4)--(f3);

\end{tikzpicture}
\vskip 0.2cm
\begin{tikzpicture}[scale=0.045]
\draw (30+60,20) node{$D*v*w*v$};

\path [fill=gray!20] (30-40,40) ellipse (14 and 15);
\node(a1) [v] at (30-40,50) {};
\node(a2) [v] at (30-40,30) {};
\node(a3) [v] at (20-40,40) {};
\node(a4) [v] at (40-40,40) {};
\draw(a4)--(a1);
\draw(a4)--(a2);
\draw(a4)--(a3);
\draw(a4)--(a4);
\path [fill=gray!20] (30,40) ellipse (14 and 15);

\node(b1) [v] at (30,50) {};
\node(b2) [v] at (30,30) {};
\node(b3) [v] at (20,40) {};
\node(b4) [v] at (40,40) {};
\draw(b1)--(b2);
\draw(b1)--(b3);
\draw(b1)--(b4);
\draw(b2)--(b3);
\draw(b2)--(b4);
\draw(b3)--(b4);
\draw (30 ,55) node{$v$};

\path [fill=gray!20] (30+40,40) ellipse (14 and 15);

\node(c1) [v] at (30+40,50) {};
\node(c2) [v] at (30+40,30) {};
\node(c3) [v] at (20+40,40) {};
\node(c4) [v] at (40+40,40) {};
\draw(c3)--(c1);
\draw(c3)--(c2);
\draw(c3)--(c3);
\draw(c3)--(c4);
\path [fill=gray!20] (30+80,40) ellipse (14 and 15);

\node(d1) [v] at (30+80,50) {};
\node(d2) [v] at (30+80,30) {};
\node(d3) [v] at (20+80,40) {};
\node(d4) [v] at (40+80,40) {};
\draw(d4)--(d1);
\draw(d4)--(d2);
\draw(d4)--(d3);
\draw(d4)--(d4);
\path [fill=gray!20] (30+120,40) ellipse (14 and 15);

\node(e1) [v] at (30+120,50) {};
\node(e2) [v] at (30+120,30) {};
\node(e3) [v] at (20+120,40) {};
\node(e4) [v] at (40+120,40) {};
\draw(e3)--(e1);
\draw(e3)--(e2);
\draw(e3)--(e3);
\draw(e3)--(e4);
\draw (30+120 ,55) node{$w$};

\path [fill=gray!20] (30+160,40) ellipse (14 and 15);

\node(f1) [v] at (30+160,50) {};
\node(f2) [v] at (30+160,30) {};
\node(f3) [v] at (20+160,40) {};
\node(f4) [v] at (40+160,40) {};
\draw(f1)--(f2);
\draw(f1)--(f3);
\draw(f1)--(f4);
\draw(f2)--(f3);
\draw(f2)--(f4);
\draw(f3)--(f4);


\draw[photon] (a4)--(b3);
\draw[photon] (b4)--(c3);
\draw[photon] (c4)--(d3);
\draw[photon] (d4)--(e3);
\draw[photon] (e4)--(f3);

\end{tikzpicture}

\caption{The split decomposition $D*v*w*v$, which is the same as $D\wedge vw$.}\label{fig:splitdecompositionpivoting}
\end{figure} 


 As a corollary of Lemmas \ref{lem:localdecom} and \ref{lem:pivotdecom}, we get the following.
	
\begin{corollary}\label{cor:pivotdecom} 
	Let $D$ be the canonical decomposition of a connected graph. If $xy\in E(\origin{D})$, then $D\wedge xy$ is the canonical decomposition of $\origin{D}\wedge xy$.  
\end{corollary}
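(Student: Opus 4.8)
The plan is to reduce pivoting to three successive local complementations and then invoke Lemma~\ref{lem:localdecom} repeatedly. First I would record that the unmarked vertices of $D$ are exactly the vertices of $\origin{D}$, so $x$ and $y$ are unmarked vertices of $D$; moreover, since a local complementation at an unmarked vertex leaves the set of marked edges unchanged ($M(D)=M(D*z)$), it also leaves the set of unmarked vertices unchanged. Hence $x$ is an unmarked vertex of $D$, $y$ is an unmarked vertex of $D*x$, and $x$ is an unmarked vertex of $D*x*y$.

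Next I would apply Lemma~\ref{lem:localdecom} three times along this chain. Since $D$ is the canonical decomposition of $\origin{D}$, the lemma gives that $D*x$ is the canonical decomposition of $\origin{D}*x$. Applying it again, using that $D*x$ is a canonical decomposition, that $y$ is one of its unmarked vertices, and that $\origin{D*x}=\origin{D}*x$, we get that $D*x*y$ is the canonical decomposition of $\origin{D}*x*y$. A third application yields that $D*x*y*x$ is the canonical decomposition of $\origin{D}*x*y*x$.

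Finally, I would combine this with the two identities at hand. On the level of graphs, $\origin{D}*x*y*x=\origin{D}\wedge xy$ by the standard fact $G\wedge xy=G*x*y*x$ recalled in Section~\ref{subsec:lrw-vm}, which applies because $xy\in E(\origin{D})$. On the level of split decompositions, Lemma~\ref{lem:pivotdecom} gives $D\wedge xy=D*x*y*x$, again since $xy\in E(\origin{D})$. Putting the two together, $D\wedge xy=D*x*y*x$ is the canonical decomposition of $\origin{D}*x*y*x=\origin{D}\wedge xy$, as claimed.

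The only point that requires a little care, and which I would single out as the main obstacle, is the bookkeeping that each intermediate decomposition $D*x$ and $D*x*y$ is again a canonical decomposition of the correspondingly modified graph and still contains the next vertex in the chain as an unmarked vertex; both facts are immediate from Lemma~\ref{lem:localdecom} and from the invariance $M(D)=M(D*z)$, so no genuine difficulty arises. One also uses implicitly that $\origin{D}\wedge xy$ is connected and hence admits a canonical decomposition, which holds because local complementation preserves connectivity.
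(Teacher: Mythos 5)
Your proof is correct and follows exactly the route the paper intends: the paper derives Corollary~\ref{cor:pivotdecom} directly from Lemma~\ref{lem:localdecom} (applied three times) together with Lemma~\ref{lem:pivotdecom} and the identity $G\wedge xy=G*x*y*x$. Your careful bookkeeping of unmarked vertices along the chain $D$, $D*x$, $D*x*y$ is the only detail the paper leaves implicit, and you handle it correctly.
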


The following are split decomposition versions of Lemma~\ref{lem:changeloc}, \ref{lem:changeloc2}, \ref{lem:equipiv}, and they can be easily verified in a same way.

\begin{lemma}\label{lem:decompverlemma}
Let $D$ be the canonical decomposition of a connected graph. The following are satisfied.
\begin{enumerate}
\item If $x, y$ are unmarked vertices of $D$ that are not linked, then
$D*x*y=D*y*x$.
\item If $x, y,z$ are unmarked vertices of $D$ such that $x$ is linked to neither $y$ nor $z$, and $y$ and $z$ are linked,  
then $D*x\wedge yz=D\wedge yz *x$.
\item If $x,y,z$ are unmarked vertices of $D$ such that $y$ is linked to both $x$ and $z$, then $D\wedge xy\wedge xz=D\wedge yz$.
\end{enumerate}
\end{lemma}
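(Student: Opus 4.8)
The plan is to reduce each of the three identities to its graph-level counterpart and then appeal to the uniqueness of the canonical decomposition (Theorem~\ref{thm:CED}). The bridge between the two levels is Lemma~\ref{lem:represent}: applied with an unmarked vertex serving as its own representative, exactly as in the proof of Lemma~\ref{lem:pivotdecom}, it says that two unmarked vertices $u,v$ of $D$ are linked in $D$ if and only if $uv\in E(\origin{D})$. Under this dictionary, the hypotheses of (1), (2), (3) become, respectively: $xy\notin E(\origin{D})$; $xy\notin E(\origin{D})$, $xz\notin E(\origin{D})$ and $yz\in E(\origin{D})$; and $xy\in E(\origin{D})$, $yz\in E(\origin{D})$. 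I also record that local complementation and pivoting keep the origin graph connected: by Lemma~\ref{lem:vm-rw} the cut-rank function is unchanged, and a graph is connected exactly when every nonempty proper vertex subset has positive cut-rank; pivoting is a composition of local complementations, so it is covered as well.

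For (1), I would apply Lemma~\ref{lem:localdecom} twice to get that $D*x*y$ is the canonical decomposition of $\origin{D}*x*y$ and $D*y*x$ is the canonical decomposition of $\origin{D}*y*x$, all the intermediate origin graphs being connected. Since $xy\notin E(\origin{D})$, Lemma~\ref{lem:changeloc} gives $\origin{D}*x*y=\origin{D}*y*x$, so the two decompositions are canonical decompositions of one and the same connected graph, hence equal; this is the claim. For (2) the template is identical, using Corollary~\ref{cor:pivotdecom} for the pivoting steps and Lemma~\ref{lem:changeloc2} in place of Lemma~\ref{lem:changeloc}: $(D*x)\wedge yz$ is the canonical decomposition of $\origin{D}*x\wedge yz$, $(D\wedge yz)*x$ is the canonical decomposition of $\origin{D}\wedge yz*x$, and these two graphs coincide by Lemma~\ref{lem:changeloc2}. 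For (3) one uses Corollary~\ref{cor:pivotdecom} twice together with Lemma~\ref{lem:equipiv}: $(D\wedge xy)\wedge xz$ is the canonical decomposition of $\origin{D}\wedge xy\wedge xz$, $D\wedge yz$ is the canonical decomposition of $\origin{D}\wedge yz$, and $\origin{D}\wedge xy\wedge xz=\origin{D}\wedge yz$ by Lemma~\ref{lem:equipiv}.

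The main obstacle is purely bookkeeping: before each local complementation or pivoting in these chains one must check that the operation is defined in the relevant split decomposition. For a local complementation at $x$ this means checking that $x$ is still an unmarked vertex; in (2) this holds because pivoting on $yz$ preserves $M(D)$ (as $D\wedge yz=D*y*z*y$, a composition of local complementations, each preserving the marked edges) and the vertex set, hence the set of unmarked vertices. For a pivoting on a pair $uv$ one must check that $u$ and $v$ are still linked, that is, still adjacent in the current origin graph. In (2) this is the observation that $yz$ is still an edge of $\origin{D}*x$, which holds since $x$ is adjacent to neither $y$ nor $z$ in $\origin{D}$, so local complementation at $x$ does not touch the edge $yz$. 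In (3) the only nontrivial instance is $xz\in E(\origin{D}\wedge xy)$: pivoting on $xy$ swaps the labels $x$ and $y$, and the vertex that becomes the new $x$ (the old $y$) keeps all of its neighbours outside $\{x,y\}$, in particular $z\in N_{\origin{D}}(y)$. Once these finitely many checks are in place, the three identities follow from Theorem~\ref{thm:CED}; alternatively, as the paper notes, each can be verified directly at the level of bags --- the bags modified by the two sides are modified by commuting local operations --- mimicking the proofs of Lemmas~\ref{lem:changeloc}--\ref{lem:equipiv}.
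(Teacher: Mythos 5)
Your argument is correct in substance, but it takes a different route from the paper: the paper gives no written proof at all, merely asserting that the three identities ``can be easily verified in the same way'' as Lemmas~\ref{lem:changeloc}, \ref{lem:changeloc2} and \ref{lem:equipiv}, i.e.\ by a direct bag-by-bag check of which bags each side modifies (as in the proof of Lemma~\ref{lem:pivotdecom} and Figure~\ref{fig:splitdecompositionpivoting}). You instead push everything down to the origin graphs via Lemma~\ref{lem:localdecom} and Corollary~\ref{cor:pivotdecom} and then invoke uniqueness of the canonical decomposition; your well-definedness checks (that $yz$ survives in $\origin{D}*x$, that $xz\in E(\origin{D}\wedge xy)$ because the pivot swaps the labels $x$ and $y$) are exactly right and are the genuinely nontrivial bookkeeping. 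The one place where your primary route is weaker than the statement is the final step: Theorem~\ref{thm:CED} gives uniqueness only \emph{up to isomorphism}, whereas the lemma asserts literal equality of marked graphs on the vertex set $V(D)$. Knowing that $D*x*y$ and $D*y*x$ are both canonical decompositions of the same graph, with the same vertex set and the same marked edges, does not by itself force the bag edge sets to coincide --- one still has to argue that the identifying isomorphism fixes the marked vertices, e.g.\ by using Lemma~\ref{lem:represent}(3) to recover each bag's edges from adjacencies in the origin graph, or by doing the direct local verification you mention in your last sentence. Since you explicitly offer that bag-level verification as the fallback, the proof closes; but if you keep the uniqueness argument as the main line, you should add the sentence that pins down equality rather than mere isomorphism. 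What your route buys is uniformity (all three items become one template plus three citations); what the paper's intended route buys is that it proves the stated equality directly, with no detour through uniqueness.
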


For a bag $B$ of $D$ and a component $T$ of $D\setminus V(B)$, let us denote by $\zeta_b(D,B,T)$ and $\zeta_t(D,B,T)$ the adjacent marked vertices of $D$ that are in $V(B)$ and in $V(T)$ respectively.
Observe that $\zeta_t(D,B,T)$ is not incident with any marked edge in $T$. So, when we take a sub-decomposition $T$ from $D$, we regard $\zeta_t(D,B,T)$ as an unmarked vertex of $T$.  


\section{Limbs in canonical decompositions}\label{sec:splitdecs}

We define the notion of \emph{limb} that is the key ingredient in our characterization.  Intuitively, a limb of a canonical decomposition 
is a modification of its sub-decomposition satisfying the property that
if two canonical decompositions $D$ and $D'$ are locally equivalent, then 
the limbs obtained from $D$ and $D'$ on the same vertex set are again locally equivalent. This property allows us to characterize linear rank-width in a right way. To avoid overloading the statements
in this section let us fix $D$ the canonical decomposition of a connected distance-hereditary graph $G$. 
We recall from Theorems \ref{thm:can-forbid} and \ref{thm:Bouchet88} that each bag of $D$ is of type K or S, and marked edges of types KK or
$S_pS_c$ do not occur.

For an unmarked vertex $y$ in $D$ and a bag $B$ of $D$ containing a marked vertex that represents $y$,
let $T$ be the component of $D\setminus V(B)$ containing $y$, and 
let $v$ and $w$ be adjacent marked vertices of $D$ where $v\in V(T)$ and $w\in V(B)$.
We define the \emph{limb } $\limb:=\limb_D[B,y]$ with respect to $B$ and $y$ as follows:
\begin{enumerate}
\item if $B$ is of type $K$, then $\limb:=T*v\setminus v$,       
\item if $B$ is of type $S$ and $w$ is a leaf, then $\limb:=T\setminus v$,
\item if $B$ is of type $S$ and $w$ is the center, then $\limb:=T\wedge vy \setminus v$.
\end{enumerate}
Since $v$ becomes an unmarked vertex in $T$, the limb is well-defined and it is a split decomposition.  While $T$ is a canonical decomposition, $\limb$ may not be a canonical decomposition at all, because deleting $v$ may create a
bag of size $2$.  We analyze the cases when such a bag appears, and describe how to transform it into a canonical decomposition.

Suppose that a bag $B'$ of size $2$ appears in $\limb$ by deleting $v$.
If $B'$ has no adjacent bags in $\limb$, then $B'$ itself is a canonical decomposition.
Otherwise we have two cases.
\begin{enumerate}
\item ($B'$ has one neighbor bag $B_1$.) \\
If $v_1\in V(B_1)$ is the the marked vertex adjacent to a vertex of $B'$
and $r$ is the unmarked vertex of $B'$ in $\mathcal{L}$,
then we can transform the limb into a canonical decomposition by removing the bag $B'$ and replacing $v_1$ with $r$.

\item ($B'$ has two neighbor bags $B_1$ and $B_2$.)\\
  If $v_1\in V(B_1)$ and $v_2\in V(B_2)$ are the two marked vertices that are adjacent to the two marked vertices of $B'$, then we can first transform the limb into another decomposition by removing $B'$ and adding a
  marked edge $v_1v_2$. If the new marked edge $v_1v_2$ is of type KK or $S_pS_c$, then by recomposing along $v_1v_2$, we finally transform the limb into a canonical decomposition.
\end{enumerate}

  Let $\limbtil_D[B,y]$ be the canonical decomposition obtained from $\limb_D[B,y]$ and we call it the \emph{canonical limb}. 
  Let
$\limbhat_D[B,y]$ be the graph obtained from $\limb_D[B,y]$ by recomposing all marked edges.
See Figure~\ref{fig2} for an example of canonical limbs.

\begin{figure}[t]
  \tikzstyle{v}=[circle, draw, solid, fill=black, inner sep=0pt, minimum width=3pt]
  \tikzset{
    photon/.style={decorate,  draw=gray, very thick, , densely dashed},
}
\quad
 \begin{tikzpicture}[scale=0.32]
\draw (5.2,4.7) circle (1.2);
    
    \node [v] (a) at (3.5,10.7) {};
    \node [v] (b) at (5.5,10.7) {};
    \node [v] (a2) at (7.5,10.7) {};
    \node [v] (b2) at (9.5,10.7) {};
    \node [v] (c2) at (8.5,11.2) {};
    \node [v] (c) at (5+3-.7,8) {};
    \node [v] (h) at (8.5,3) {};
    \node [v] (i) at (8.8,5) {};
	
    \node [v] (d) at (0.5,1.5) {};
    \node [v] (e) at (2.5,1.5) {};
    \node [v] (f) at (1.5,0) {};
    \node [v] (g) at (2,6) {};
    \node [v] (g1) at (0.8,5) {};

	\node [v] (y2) at (1.5,2.7) {};
	\node [v] (y1) at (2,4) {};
	\draw[photon] (y1) -- (y2);

	\foreach \x in {d,e,f}
	\draw (y2) -- (\x);
	
    \node [v] (x) at (3,4.5) {};
    \node [v] (y) at (4.5,4.5) {};
	\draw (y1)--(x);
	\node [v] (x1) at (6, 4.5) {};
	\node [v] (x2) at (7.5, 4.5) {};
		
	\foreach \x in {y}
	\draw (x1) -- (\x);
	\foreach \x in {i}
	\draw (x2) -- (\x);
	\draw[photon] (x1) --(x2);
	\draw(h)--(i);
    
    \node [v] (x3) at (5-.8, 9.7) {};
    \node [v] (x32) at (8.2, 9.7) {};
    \draw[photon] (c)--(x32);
    \foreach \x in {a2,c2,x32}
    \draw (b2) -- (\x);

    \node [v] (x4) at (6-.8, 8) {};
    \foreach \x in {a,b}
    \draw (x3) -- (\x);
	\draw[photon] (x3) -- (x4);
	\node [v] (x5) at (6, 7) {};
    \node [v] (x6) at (5.5, 5.5) {};
    \foreach \x in {x4,c}
    \draw (x5) -- (\x);
    \foreach \x in {x1,x1}
    \draw (x6) -- (\x);
	\draw[photon] (x5)--(x6);
	\draw (y)--(x6);
							
	\foreach \x in {g, g1}
	\draw (x)--(\x);
	\draw[photon] (x)--(y);
                        
    \foreach \x in {b}
    \draw (a)--(\x);
    \foreach \x in {f,e}
    \draw (d)--(\x);
    \foreach \x in {f}
    \draw (e)--(\x);

 \node [label=$B$] at (5.3,1.8) {};
\node [label=$(a)$] at (5.3,-3.5) {};

\draw(11, -2)-- (11, 10);
   \end{tikzpicture}
   \quad
 \begin{tikzpicture}[scale=0.32]
    
    \node [v] (a) at (3.5,10.7) {};
    \node [v] (b) at (5.5,10.7) {};
    \node [v] (a2) at (7.5,10.7) {};
    \node [v] (b2) at (9.5,10.7) {};
    \node [v] (c2) at (8.5,11.2) {};
    \node [v] (c) at (5+3-.7,8) {};
    \node [v] (h) at (8.5,3) {};
    \node [v] (i) at (8.8,5) {};
	
    \node [v] (d) at (0.5,1.5) {};
    \node [v] (e) at (2.5,1.5) {};
    \node [v] (f) at (1.5,0) {};
    \node [v] (g) at (2,6) {};
   \node [v] (g1) at (0.8,5) {};

	\node [v] (y2) at (1.5,2.7) {};
	\node [v] (y1) at (2,4) {};
	\draw[photon] (y1) -- (y2);
	\foreach \x in {d,e,f}
	\draw (y2) -- (\x);

    \node [v] (x3) at (5-.8, 9.7) {};
    \node [v] (x32) at (8.2, 9.7) {};
    \draw[photon] (c)--(x32);
    \foreach \x in {a2,c2,x32}
    \draw (b2) -- (\x);

          \node [v] (x4) at (6-.8, 8) {};
    \foreach \x in {a,b}
    \draw (x3) -- (\x);
	\draw[photon] (x3) -- (x4);
\draw (y1)--(g1)--(g);
\draw (y1)--(g); \draw (h)--(i); \draw (x4)--(c);
\node [label=$(b)$] at (5.3,-3.5) {};

\draw(11, -2)-- (11, 10);

  \end{tikzpicture}
  \quad
    \begin{tikzpicture}[scale=0.32]
    
    \node [v] (a) at (3.5,10.7) {};
    \node [v] (b2) at (4.5,11.2) {};
    \node [v] (b3) at (5.5,11.2) {};
    \node [v] (b) at (6.5,10.7) {};
    \node [v] (h) at (8.5,3) {};
    \node [v] (i) at (8.8,5) {};
	
    \node [v] (d) at (0.5,1.5) {};
    \node [v] (e) at (2.5,1.5) {};
    \node [v] (f) at (1.5,0) {};
    \node [v] (g) at (2,6) {};
 \node [v] (g1) at (0.8,5) {};

	\node [v] (y2) at (1.5,2.7) {};
	\node [v] (y1) at (2,4) {};
	\draw[photon] (y1) -- (y2);
	\foreach \x in {d,e,f}
	\draw (y2) -- (\x);

    \node [v] (x3) at (5, 9.7) {};
    \foreach \x in {a,b,b2,b3}
    \draw (x3) -- (\x);
\draw (y1)--(g1)--(g);
\draw (y1)--(g); \draw (h)--(i);
\node [label=$(c)$] at (5.3,-3.5) {};

  \end{tikzpicture}
  
  \caption{In $(a)$, we have a canonical decomposition $D$ of a distance-hereditary graph with a bag $B$. The dashed edges are marked edges of $D$. In $(b)$, we have limbs associated with the components of $D\setminus V(B)$. The canonical limbs associated with limbs are shown in $(c)$. }
  \label{fig2}

\end{figure}
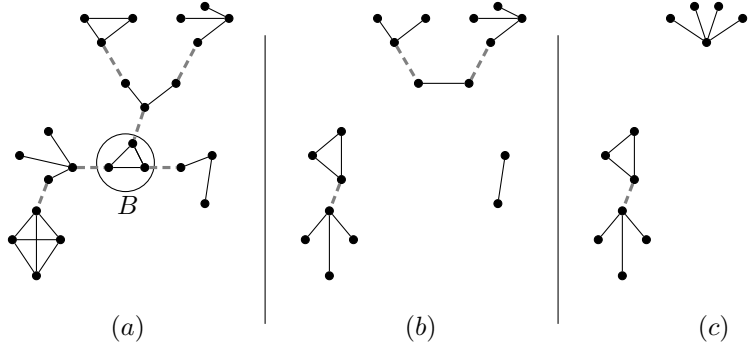

\begin{lemma}\label{lem:connected} Let $B$ be a bag of $D$. If an unmarked vertex $y$ of $D$ is represented by a marked vertex of $B$, then $\limb_{D}[B,y]$ is connected.
\end{lemma}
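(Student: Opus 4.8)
The plan is to trace through the three cases in the definition of $\limb_D[B,y]$ and verify that in each case connectivity is preserved, using the fact that $T$ itself is connected (being a component of $D\setminus V(B)$, and split decompositions are connected by definition) and that local complementation and pivoting never disconnect a split decomposition.

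First I would set up notation exactly as in the definition: let $T$ be the component of $D\setminus V(B)$ containing $y$, let $v\in V(T)$ and $w\in V(B)$ be the adjacent marked vertices joining $T$ to $B$, so that $vw\in M(D)$. Since $T$ is connected and $v$ is a marked vertex of $D$ lying in $T$, the vertex $v$ has exactly one neighbor inside $T$, namely its partner across an \emph{unmarked} edge — here I would invoke that each marked vertex is incident to exactly one marked edge and that the marked edges form a matching, so inside $T$ the vertex $v$ is incident only to unmarked edges; in fact $v$ is a leaf-type vertex of the bag of $T$ containing it unless that bag collapses. The key point is: since $y$ is represented by a marked vertex of $B$, the alternating path witnessing representativity enters $T$ through $v$, so $v$ is a representative of $y$ in $T$ (or at least $y$ lies in $T$ and is reachable from $v$), and in particular $v$ is \emph{not} an isolated vertex of $T$ whose deletion would be harmless; I need that deleting $v$ leaves the rest connected.

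Then I would handle the three cases. In case (1), $\limb = T*v\setminus v$: by Lemma~\ref{lem:vm-rw}-style reasoning (more precisely by the definition of local complementation on split decompositions), $T*v$ is a split decomposition of $\origin{T}*v$ and hence connected, with the same underlying tree $T_{T}$; deleting the marked vertex $v$ is the crucial step. Here I would argue that $v$ is a leaf of the bag $\bag{T}{\cdot}$ containing it after the local complementation turns that bag appropriately, or more robustly: $v$ has a unique neighbor $v^*$ in $T$ via an unmarked edge, and $T*v$ complements among neighbors of $v$, so $v$ still has $v^*$ as a neighbor; deleting $v$ from $T*v$ disconnects $T*v$ only if $v$ were a cut-vertex, but the only way $v$ separates $T$ is across marked edges, and $v$'s sole incident marked edge $vw$ has already been removed when we passed to $T=D\setminus V(B)$. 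So $v$ is not a cut-vertex of $T$ (nor of $T*v$, which has the same cut-structure on the tree level), and $T*v\setminus v$ is connected; if a bag of size $2$ appears, the described surgery (removing $B'$ and either relabeling $v_1$ as $r$ or adding the marked edge $v_1v_2$ and possibly recomposing) manifestly preserves connectivity since it only contracts or reroutes a length-two piece of the alternating structure. Case (2) is the simplest: $\limb = T\setminus v$, and the same cut-vertex argument applies verbatim. Case (3) is $\limb = T\wedge vy\setminus v$: by Lemma~\ref{lem:pivotdecom}, $T\wedge vy = T*v*y*v$ is a composition of local complementations, each preserving connectedness and the tree structure, so $T\wedge vy$ is connected; then deleting $v$ is again safe by the cut-vertex argument, and the size-$2$-bag surgery preserves connectivity as before.

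The main obstacle I anticipate is the careful bookkeeping that $v$ is not a cut-vertex of $T$ (equivalently, that after removing the marked edge $vw$, the vertex $v$ sits at a ``leaf'' of the decomposition tree $T_T$, so its bag in $T$ is not an articulation point of the tree). One has to be slightly careful in case (3) about whether $y=v^*$ (the unmarked partner of $v$ inside the same bag), since then the pivot $T\wedge vy$ acts only within a single bag and swaps $v$ and $y$; after deleting $v$ one should check the resulting object is still a legitimate connected split decomposition — but swapping two vertices in one bag and then deleting the one that no longer carries the (already-removed) marked edge is harmless. I would also double-check the edge case where $T$ itself is a single bag, so $v$ is the only marked vertex of $T$; then $T\setminus v$ (or $T*v\setminus v$, or $T\wedge vy\setminus v$) is just a single bag on $V(T)\setminus v$, trivially connected, provided $\abs{V(T)}\ge 2$, which holds because $y\in V(T)$ and $v\in V(T)$ with $y\ne v$.
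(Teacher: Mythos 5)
Your proposal rests on the claim that $v$ is not a cut-vertex of $T$ because ``the only way $v$ separates $T$ is across marked edges,'' and the related assertion that $v$ has exactly one neighbour inside $T$ and sits as a ``leaf-type vertex'' of its bag. Both claims are false in general. The vertex $v=\zeta_t(D,B,T)$ is an ordinary vertex of some bag $B'$ of $T$; it may be a vertex of a complete bag (hence adjacent to all of $V(B')\setminus\{v\}$) or the \emph{center} of a star bag $B'$. In the latter situation $v$ certainly is a cut-vertex of $T$ whenever at least two leaves of $B'$ carry marked edges to further subtrees: deleting $v$ then separates those subtrees from one another. Likewise, the assertion that $T*v$ ``has the same cut-structure'' as $T$ is exactly backwards: local complementation at $v$ can turn a star bag centered at $v$ into a complete bag, which is the whole point of performing it before deleting $v$.

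What your argument misses is that the case distinction in the definition of the limb, together with the canonicity of $D$ (no marked edges of type $KK$ or $S_pS_c$, Theorem~\ref{thm:can-forbid}), is precisely the mechanism that neutralizes the cut-vertex danger. For instance, when $B$ is complete, $B'$ must be a star, and $T*v$ makes $V(B')\setminus\{v\}$ induce either a complete graph (if $v$ was the center) or a star minus a leaf; when $w$ is a leaf of a star $B$, canonicity forbids $v$ from being the center of $B'$, so $T\setminus v$ is safe with no preliminary operation; when $w$ is the center of $B$, canonicity forbids $v$ from being a leaf of a star $B'$, and the pivot $T\wedge vy$ swaps $v$ with a leaf representative of $y$ so that $v$ is no longer the center before it is deleted. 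The paper's proof is exactly this: since local complementations preserve connectedness, it suffices to check in each of the three cases that $V(B')\setminus\{v\}$ induces a connected subgraph of $\limb_D[B,y]$ (the rest of the decomposition hangs off $B'$ through connected subtrees). Your proof needs to be reorganized around this case analysis, using the forbidden marked-edge types, rather than around a blanket ``$v$ is not a cut-vertex'' claim.
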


\begin{proof} 
Let $T$ be the component of $D\setminus V(B)$ containing $y$, and
$v:=\zeta_t(D,B,T)$, and
$B'$ be the bag of $D$ containing $v$.  Since local complementations maintain
  connectedness, it suffices to verify that $V(B')\setminus \{v\}$ induces a connected subgraph in $\limb_{D}[B,y]$.  This is not hard to see for each of the three cases.
\end{proof}

\begin{lemma}\label{lem:freechoice} 
	Let $B$ be a bag of $D$. If two unmarked vertices $x$ and $y$ are represented by a marked vertex $w$ in $B$, 
	then $\limb_D[B,x]$ is
  locally equivalent to $\limb_D[B,y]$.  
 \end{lemma}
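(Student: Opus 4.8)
The plan is to reduce the statement to a statement about a single pivot in $D$. Since $x$ and $y$ are both represented by the same marked vertex $w$ in $B$, they lie in the same component $T$ of $D\setminus V(B)$, and in fact they are both represented in $T$ by the vertex $v:=\zeta_t(D,B,T)$ (which becomes unmarked in $T$, and hence also in the limb). The key observation I would establish first is that $x$ and $y$ are linked in $T$ (equivalently $xy\in E(\origin{T})$ by Lemma~\ref{lem:represent}): since both are represented by $v$ in $T$, an alternating path from $x$ to $v$ followed by one from $v$ to $y$, suitably shortened, links them. Thus $T\wedge xy$ makes sense, and by Corollary~\ref{cor:pivotdecom} it is again a canonical decomposition; moreover pivoting on $xy$ does not disturb the bag $B'$ containing $v$ outside of the part of $T$ between $x$ and $y$, and in particular $v$ is unaffected by this pivot because the alternating path linking $x$ and $y$ stays inside $T$ and avoids $v$ (this needs a small check: $v$ is not on the $x$--$y$ path since $v$ is a representative of both and lies "above" them toward $B$).

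The main step is then a case analysis by the type of $B$, mirroring the three cases in the definition of $\limb_D[B,y]$. In the leaf-of-$S$ case, $\limb_D[B,x]=T\setminus v$ and $\limb_D[B,y]=T\setminus v$ are literally equal, so there is nothing to prove. In the $K$ case, $\limb_D[B,x]=T*v\setminus v$ and $\limb_D[B,y]=T*v\setminus v$ are also literally equal. The only genuine case is when $B$ is of type $S$ and $w$ is the center: then $\limb_D[B,x]=T\wedge vx\setminus v$ and $\limb_D[B,y]=T\wedge vy\setminus v$. Here I would use Lemma~\ref{lem:equipiv} (in its split-decomposition form, Lemma~\ref{lem:decompverlemma}(3)): since $v$ is linked to both $x$ and $y$ in $T$ — because $v$ represents both — we get $T\wedge vx\wedge xy=T\wedge vy$, hence $T\wedge vy\setminus v=(T\wedge vx\wedge xy)\setminus v$. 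Since the pivot $\wedge xy$ acts only on bags strictly between $x$ and $y$ and does not touch $v$, it commutes with deleting $v$, giving $(T\wedge vx\wedge xy)\setminus v=((T\wedge vx)\setminus v)\wedge xy=\limb_D[B,x]\wedge xy$. Finally, $xy\in E(\limbhat_D[B,x])$ (pivoting/deleting $v$ preserves the linkedness of $x$ and $y$, as $v$ was irrelevant to the $x$--$y$ path), so $\limb_D[B,x]\wedge xy=\limb_D[B,x]*x*y*x$ by Lemma~\ref{lem:pivotdecom}; thus $\limb_D[B,y]$ is obtained from $\limb_D[B,x]$ by a sequence of local complementations, i.e.\ they are locally equivalent.

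The last thing to verify is that passing from the limb $\limb_D[B,x]$ to the canonical limb $\limbtil_D[B,x]$ does not interfere: local equivalence of the limbs yields local equivalence of their canonicalizations, since canonicalization only recomposes/expands size-$2$ bags and commutes with local complementation at unmarked vertices (by Lemma~\ref{lem:localdecom} and Remark~\ref{rem:localdecom}); but in fact the lemma as stated is about $\limb$, not $\limbtil$, so this remark is only needed if one wants the conclusion for canonical limbs.

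The step I expect to be the main obstacle is the bookkeeping that the pivot $\wedge xy$ (and the local complementations realizing it) genuinely leaves the vertex $v$ and its incident marked edge untouched, so that it commutes with $\setminus v$; this requires carefully locating $v$ relative to the alternating $x$--$y$ path in $T$ and checking that $v$ is not one of the bags modified by the pivot. Everything else is either a literal identity of split decompositions or a direct appeal to Lemma~\ref{lem:decompverlemma} and Lemma~\ref{lem:pivotdecom}.
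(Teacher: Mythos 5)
Your overall route is the same as the paper's: the $K$ and $S$-leaf cases are literal equalities, and the $S$-center case is handled by the identity $T\wedge vx\wedge xy=T\wedge vy$ from Lemma~\ref{lem:decompverlemma}(3) together with commuting $\wedge xy$ past $\setminus v$; the paper's proof is exactly the chain $(T\wedge vx\setminus v)\wedge xy=T\wedge vx\wedge xy\setminus v=T\wedge vy\setminus v$. However, your ``key observation'' that $x$ and $y$ are linked in $T$ is false in general, and the argument you sketch for it (concatenating the two alternating paths at $v$ and ``suitably shortening'') cannot work, since the concatenation has two consecutive unmarked edges at $v$. Concretely, let $G=C_4$ with bipartition $(\{a,b\},\{c,d\})$; its canonical decomposition consists of two star bags with centers $x'$ and $y'$ joined by a marked edge of type $S_cS_c$. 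Taking $B$ to be the bag on $\{a,b,x'\}$ and $w=x'$, both $c$ and $d$ are represented by $w$, yet they are two leaves of the star $T$ on $\{c,d,y'\}$, hence not linked in $T$ (indeed $cd\notin E(G)$). The claim fails precisely when the bag of $T$ containing $v=\zeta_t(D,B,T)$ is a star centered at $v$.

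Fortunately your proof does not actually need this claim: $T\wedge xy$ is never used, only $(T\wedge vx)\wedge xy$, and $x$ \emph{is} linked to $y$ in $T\wedge vx$ because $v$ is linked to both $x$ and $y$ in $T$ and the pivot on $vx$ swaps $v$ and $x$ while leaving edges incident to the pivot pair uncomplemented. The same correction repairs your justification of $xy\in E(\limbhat_D[B,x])$ at the end, which you again derive from the false claim: in $\origin{T}\wedge vx$ the vertex $x$ occupies the former position of $v$, which was adjacent to $y$, and deleting $v$ does not remove this edge. With that substitution your argument coincides with the paper's, and your extra care about why $\wedge xy$ commutes with $\setminus v$ (the interior vertices of an alternating path are marked, so the unmarked vertex $v$ is never interior to the $x$--$y$ path) is a detail the paper leaves implicit.
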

\begin{proof}

Since $x$ and $y$ are represented by the same vertex $w$ of $B$, they are contained in the same component of $D\setminus V(B)$, say $T$.  
Let $v:=\zeta_t(D,B,T)$. 

If $B$ is a complete bag or a star bag having $w$ as a leaf,
then by the definition of limbs, 
  $\limb_D[B,x]=\limb_D[B,y]$.
  So, we may assume that $B$ is a star bag and $w$ is its center.
Since $v$ is linked to both $x$ and $y$ in $T$, 
by Lemma~\ref{lem:decompverlemma}, 
$T\wedge vx\wedge xy=T\wedge vy$.
So, we obtain that
$(T\wedge vx\setminus v) \wedge xy=T\wedge vx\wedge xy\setminus v=T\wedge vy\setminus v.$ Therefore
  $\limb_D[B,x]$ is locally equivalent to $\limb_D[B,y]$.
\end{proof}


 For a bag $B$ of $D$ and a component $T$ of $D\setminus V(B)$, we define $f_D(B,T)$ as the
 linear rank-width of $\limbhat_D[B,y]$ for some unmarked vertex $y\in V(T)$.  In fact, by Lemma~\ref{lem:freechoice}, $f_D(B,T)$ does not depend on the choice of $y$. 
 Furthermore, by the following proposition, it does not change when we replace $D$ with some decomposition locally equivalent to $D$.

 \begin{proposition}\label{prop:preservelrw} 
Let $B$ be a bag of $D$ and let $y$ be an unmarked vertex of $D$ represented a vertex $w$ in $B$.
 Let $x\in V(\origin{D})$.  
 If an unmarked vertex $y'$ is represented by $w$ in $D*x$, then $\limbhat_D[B,y]$ is locally equivalent to $\limbhat_{D*x}[(D*x)[V(B)],y']$. Therefore,
   $f_D(B,T)=f_{D*x}((D*x)[V(B)],T_x)$ where $T$ and $T_x$ are the components of $D\setminus V(B)$ and $(D*x)\setminus V(B)$ containing $y$, respectively.
   Moreover,  $\limbtil_D[B,y]$ and $\limbtil_{D*x}[(D*x)[V(B)],y']$ are locally equivalent as canonical decompositions. 
 \end{proposition}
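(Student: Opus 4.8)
The plan is to reduce the second and third assertions to the first, and then to establish the first assertion---that $\limbhat_D[B,y]$ and $\limbhat_{D*x}[(D*x)[V(B)],y']$ are locally equivalent graphs---by a case analysis of how a single local complementation at $x$ affects the data that define the limb.

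For the reductions: since local complementations preserve the cut-rank function (Lemma~\ref{lem:vm-rw}), locally equivalent graphs have the same linear rank-width; and since $y$ is an unmarked vertex of $T$ represented by $w$ in $B$ while $y'$ is an unmarked vertex of $T_x$ represented by $w$ in $(D*x)[V(B)]$, we have $f_D(B,T)=\lrw(\limbhat_D[B,y])$ and $f_{D*x}((D*x)[V(B)],T_x)=\lrw(\limbhat_{D*x}[(D*x)[V(B)],y'])$ by the definition of $f$ and Lemma~\ref{lem:freechoice}, so the first assertion yields the equality of the $f$-values. For the third assertion, $\limbhat_D[B,y]$ is connected (Lemma~\ref{lem:connected}) and $\limbtil_D[B,y]$ is its canonical decomposition (Theorems~\ref{thm:CED} and~\ref{thm:can-forbid}), and likewise on the $D*x$ side; writing the local equivalence from the first assertion as $\limbhat_{D*x}[(D*x)[V(B)],y']=\limbhat_D[B,y]*z_1*\cdots*z_k$ and applying Lemma~\ref{lem:localdecom} $k$ times, the canonical decomposition of the left-hand side equals $\limbtil_D[B,y]*z_1*\cdots*z_k$, which is locally equivalent to $\limbtil_D[B,y]$.

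For the first assertion it suffices to treat one local complementation $D*x$, and by Lemma~\ref{lem:freechoice} (applied in $D$ and again in $D*x$) I may replace $y$ and $y'$ by convenient representatives of $w$. Let $T$ be the component of $D\setminus V(B)$ containing $y$, let $v:=\zeta_t(D,B,T)$ and $w:=\zeta_b(D,B,T)$, and let $B'$ be the bag of $D$ containing $v$; by Theorems~\ref{thm:can-forbid} and~\ref{thm:Bouchet88} the marked edge $vw$ has type neither $KK$ nor $S_pS_c$, which pins down the type of $B'$ and whether $v$ is a leaf or the center of it---this is what keeps the deletion of $v$ in the definition of the limb tractable. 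The proof then rests on understanding two things. First, how $(D*x)[V(B)]$ differs from $B$: it equals $B$ unless $x$ has a representative $w_0$ in $B$, in which case it is $B*w_0$, and the possible effects on the pair ``(type of that bag, status of $w$ in it)'' form a short explicit list (no change; a complete bag becomes a star with $w$ a leaf, or with $w$ the center; a star becomes a complete bag). Second, how the component $T$ is transformed: it is replaced either by itself or by an internal local complementation $T*x_0$, with $x_0=x$ if $x\in V(T)$ and $x_0=v$ otherwise, and the latter occurs exactly when the chain of representatives of $x$ crosses $B$. One then checks, for each compatible combination of these two effects, that the limb of $D*x$ (obtained by applying the appropriate recipe among~(1)--(3) to $(D*x)[V(B)]$ and the transformed $T$) recomposes to a graph locally equivalent to $\limbhat_D[B,y]$; the key point is that a change from a complete bag to a star at $w$ is always accompanied by an extra internal $*v$ on $T$, which cancels against the $*v$ in the complete-bag recipe, while the remaining discrepancies are single local complementations at unmarked vertices, handled via Lemma~\ref{lem:decompverlemma} and the correspondence between local complementation and pivoting in a split decomposition and in its recomposition (Lemma~\ref{lem:pivotdecom}, Corollary~\ref{cor:pivotdecom}).

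The main obstacle will be the bookkeeping in the cases where $(D*x)[V(B)]$ is a star with $w$ as its center, since then the limb is produced by the case~(3) recipe $T\wedge vy\setminus v$, and one must commute the pivot on $vy$ with the internal local complementation on $T$ while tracking which unmarked vertices remain linked---precisely the situation Lemma~\ref{lem:decompverlemma}(2)--(3) is designed for---together with verifying the few identities of the form $(H*v)\setminus v\sim H\setminus v$ that arise, using the star structure of the bag $B'$ containing $v$. One also has to dispose of the degenerate possibility that deleting $v$ creates a bag of size two, but this only triggers the canonical-form clean-up described just before the statement and does not alter the recomposed graph $\limbhat$.
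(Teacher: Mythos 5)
Your reductions of the second and third assertions to the first, and your case division (whether the representative chain of $x$ enters $T$ through $v:=\zeta_t(D,B,T)$, crossed with the type of $B$ and the position of $w$ in it), coincide with the paper's proof, and your description of how the bag and the component transform under $D*x$ is accurate. But the core of the argument is missing. In the majority of the subcases the limb recipe changes type, and you must then compare two graphs built from $\origin{T}$ by \emph{different} recipes: for instance, when $B$ is a star with center $w$ and $x\in V(T)$ is linked to $v$, the comparison is between $\origin{T}\wedge vy\setminus v$ and $\origin{T}*x*v\setminus v$; when $B$ is complete and $x$ is linked to $v$, it is between $\origin{T}*v\setminus v$ and $\origin{T}*x\wedge vx\setminus v$. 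These are not discrepancies by ``a single local complementation at an unmarked vertex,'' and the commutation rules of Lemma~\ref{lem:decompverlemma} and Lemma~\ref{lem:pivotdecom} do not resolve them.

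The paper closes exactly these cases with Geelen and Oum's lemma (Lemma~\ref{lem:keylem}), which says that the three deletion recipes $G\setminus v$, $G*v\setminus v$ and $G\wedge vz\setminus v$ are permuted, up to local equivalence, by a local complementation at another vertex: all three are fixed if that vertex is a non-neighbour of $v$, and the last two are swapped if it is a neighbour. Your proposal never invokes this fact, and your fallback of ``verifying the few identities of the form $(H*v)\setminus v\sim H\setminus v$'' is a red flag: that identity is false in general --- the three classes $H\setminus v$, $H*v\setminus v$, $H\wedge vz\setminus v$ are typically pairwise non-equivalent, which is precisely why the limb definition distinguishes the three bag types in the first place. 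Without Lemma~\ref{lem:keylem} (or an independent proof of its content) the case analysis cannot be completed, so as written the proposal has a genuine gap at its central step.
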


Before proving it, let us recall the following by Geelen and Oum. 

\begin{lemma}[{Geelen and Oum \cite[Lemma 3.1]{GeelenO09}}]\label{lem:keylem}   Let $G$ be a graph and $x,y$ be two distinct vertices in $G$. Let $xw\in E(G*y)$ and $xz\in E(G)$.
  \begin{enumerate}
  \item If $xy\notin E(G)$, then $(G*y)\setminus x$, $(G*y*x)\setminus x$, and $(G*y)\wedge xw\setminus x$ are locally equivalent to $G\setminus x$, $G*x\setminus x$, and $G\wedge xz\setminus x$, respectively.
  \item If $xy\in E(G)$, then $(G*y)\setminus x$, $(G*y*x)\setminus x$, and $(G*y)\wedge xw\setminus x$ are locally equivalent to $G\setminus x$, $G\wedge xz\setminus x$, and $(G*x)\setminus x$, respectively.
  \end{enumerate}
\end{lemma}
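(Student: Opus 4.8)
The plan is to reduce all six assertions, via the identity $G\wedge xy=G*x*y*x=G*y*x*y$ and the commutation lemmas already available, to two elementary ``commute past a deletion'' facts together with a single pivot-independence statement. Throughout write $H\equiv H'$ to mean that $H$ and $H'$ are locally equivalent. First I would record two deletion-commutation identities that hold with no hypothesis on adjacency: for distinct $x,y$,
\[(G*y)\setminus x=(G\setminus x)*y,\]
and, for an edge $ab\in E(G)$ with $x\notin\{a,b\}$,
\[(G\wedge ab)\setminus x=(G\setminus x)\wedge ab.\]
Both follow directly from the definitions, since a local complementation at $y$ (respectively a pivot on $ab$) only toggles edges among $N_G(y)$ (respectively among the three parts determined by $a,b$), and for the surviving vertices this toggling does not see the presence of $x$. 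In particular $(G*y)\setminus x=(G\setminus x)*y\equiv G\setminus x$, which settles the first assertion of \emph{both} parts at once.

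Next I would isolate the key claim that the ``pivot state'' is independent of the chosen neighbour: for any two neighbours $a,b$ of $x$ we have $(G\wedge xa)\setminus x\equiv (G\wedge xb)\setminus x$. Call this Claim~$(\star)$. To prove it, apply Lemma~\ref{lem:equipiv} to the triple $(a,x,b)$: since $xa,xb\in E(G)$ it gives $G\wedge xa\wedge ab=G\wedge xb$, and one checks that $ab\in E(G\wedge xa)$ so the second pivot is legitimate (here $ab$ need not be an edge of $G$). The second commutation identity above, applied with base graph $G\wedge xa$, edge $ab$, and deleted vertex $x\notin\{a,b\}$, then yields $(G\wedge xb)\setminus x=\bigl((G\wedge xa)\setminus x\bigr)\wedge ab\equiv(G\wedge xa)\setminus x$, because a pivot is a composition of local complementations.

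With these tools the ``diagonal'' non-adjacent item and both adjacent-case items follow by pure rewriting. For assertion (1.ii) ($xy\notin E(G)$), Lemma~\ref{lem:changeloc} gives $G*y*x=G*x*y$, so $(G*y*x)\setminus x=(G*x*y)\setminus x=\bigl((G*x)\setminus x\bigr)*y\equiv(G*x)\setminus x$. For assertion (2.ii) ($xy\in E(G)$), the identity $G\wedge xy=G*y*x*y$ gives $G*y*x=(G\wedge xy)*y$, hence $(G*y*x)\setminus x\equiv(G\wedge xy)\setminus x$, and Claim~$(\star)$ applied to the neighbours $y$ and $z$ of $x$ identifies this with $(G\wedge xz)\setminus x$. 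For assertion (2.iii), note $xy\in E(G*y)$, so Claim~$(\star)$ in $G*y$ replaces the given neighbour $w$ by $y$; then $(G*y)\wedge xy=(G*y)*y*x*y=G*x*y$, whence $\bigl((G*y)\wedge xy\bigr)\setminus x\equiv(G*x)\setminus x$.

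The remaining item, assertion (1.iii), is where the real care is needed, and I expect it to be the main obstacle. Using $N_{G*y}(x)=N_G(x)$ (valid because $x\notin N_G(y)$, so $*y$ toggles no edge at $x$) together with Claim~$(\star)$ in both $G$ and $G*y$, I would reduce to comparing $(G*y)\wedge xz\setminus x$ with $(G\wedge xz)\setminus x$ for a single conveniently chosen neighbour $z$ of $x$. If $y$ has a non-neighbour $z$ in $N_G(x)$, then $y$ is non-adjacent to both endpoints of $xz$ and Lemma~\ref{lem:changeloc2} gives $(G*y)\wedge xz=(G\wedge xz)*y$, so the two agree after deleting $x$. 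The obstacle is the degenerate case in which $y$ is adjacent to \emph{every} neighbour of $x$: then no such $z$ exists, Lemma~\ref{lem:changeloc2} no longer applies, and $*y$ genuinely changes the graph on surviving vertices. I would resolve this by observing that pivoting $xz$ creates the edge $xy$ (as $y\sim z$), placing $G':=G\wedge xz$ into the adjacent regime, and then transporting the already-proven adjacent-case identities back through the pivot involution $G=G'\wedge xz$; verifying that the residual discrepancy introduced by $*y$ is absorbed by a single local complementation at the surviving endpoint $z$ is the one step that requires a careful but bounded computation.
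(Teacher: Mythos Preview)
The paper does not prove this lemma at all: it is quoted verbatim from Geelen and Oum~\cite{GeelenO09} and used as a black box. So there is no ``paper's own proof'' to compare against; I can only assess your argument on its merits.

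Your scaffolding is sound and five of the six assertions go through cleanly. The two deletion-commutation identities are correct; Claim~$(\star)$ is correctly derived from Lemma~\ref{lem:equipiv} (one does have $ab\in E(G\wedge xa)$, since pivoting $xa$ makes $a$ inherit the old adjacencies of $x$); and your reductions for (1.i), (2.i), (1.ii), (2.ii), (2.iii), and Case~A of (1.iii) are all valid.

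The genuine gap is Case~B of (1.iii), where $N_G(x)\subseteq N_G(y)$. You propose to pass to $G':=G\wedge xz$, note that $xy\in E(G')$, and ``transport the already-proven adjacent-case identities back through the pivot involution''. This does not close the circle. Writing the three local-equivalence classes $[G\setminus x]$, $[G*x\setminus x]$, $[G\wedge xz\setminus x]$ as \emph{states} $1,2,3$, what you have established is: an adjacent local complementation swaps states $2$ and $3$ and fixes state $1$, while a non-adjacent one fixes states $1$ and $2$. Passing to $G'=G\wedge xz$ permutes the labels (states $1$ and $3$ swap, state $2$ is fixed), so applying the adjacent-case result at $y$ in $G'$ just reproduces information you already have about states $1$ and $2$; it tells you nothing new about state $3$ of $G*y$. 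Every route I tried through $*z$, $*y$, $\wedge yz$ that stays within your already-proven items lands back on the unknown (1.iii) at some non-adjacent step. So ``a careful but bounded computation'' is not a throwaway remark here: it is the whole content of this case, and you have not done it. Either carry out the explicit comparison of $(G*y)\wedge xz\setminus x$ with $(G\wedge xz)\setminus x$ under the hypothesis $N_G(x)\subseteq N_G(y)$ (small examples suggest they are in fact very close, often equal), or supply a different non-circular reduction.
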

%

\begin{proof}[Proof of Proposition \ref{prop:preservelrw}] 
Let $v:=\zeta_t(D,B,T)$ and $B':=(D*x)[V(B)]$.
Let $T$ and $T_x$ be the components of $D\setminus V(B)$ and $(D*x)\setminus V(B')$ containing $y$, respectively. 
Note that $V(T)=V(T_x)$.  

We claim that $\limbhat_D[B,y]$ is locally equivalent to $\limbhat_{D*x}[B',y']$ for some unmarked vertex $y'$ represented by $w$ in $D*x$.
We divide into cases depending on the type of the bag $B$  and whether $x\in V(T)$.

  \vskip 0.2cm
\noindent\emph{\textbf{Case 1.} $x\in V(T)$ and $x$ is not linked to $v$ in $T$.}

 Since $x$ is not linked to $v$ in $T$, applying a local complementation at $x$ does not change the bag $B$. 
 Thus, $B'=B$ and $vx\notin E(\origin{T})$.
 In this case, let $y':=y$.  

\begin{enumerate}
\item ($B$ is of type S and $w$ is a leaf of $B$.)  $\limb_D[B,y]=T\setminus v$ and $\limb_{D*x}[B',y']=T*x\setminus v$.
    Since $(T\setminus v)*x=T*x\setminus v$, 
    $\limb_D[B,y]$ and $\limb_{D*x}[B',y']$ are locally equivalent, and thus
    $\limbhat_D[B,y]$ and $\limbhat_{D*x}[B',y']$ are locally equivalent.
\item ($B$ is of type S and $w$ is the center of $B$.) $\limb_D[B,y]=T\wedge vy\setminus v$ and $\limb_{D*x}[B',y']=(T*x)\wedge vy\setminus v$, and we have
\[\limbhat_D[B,y]=\origin{T\wedge vy\setminus v}=\origin{T}\wedge vy\setminus v.\]
    Since $vx\notin E(\origin{T})$,  
 by Lemma~\ref{lem:keylem}, $\limbhat_D[B,y]$ is locally equivalent to
    \[\limbhat_{D*x}[B',y']=\origin{(T*x)\wedge vy\setminus v}=\origin{(T*x)}\wedge vy\setminus v.\]
    \item ($B$ is of type K.) $\limb_D[B,y]=T*v\setminus v$ and $\limb_{D*x}[B',y']=T*x*v\setminus v$, and we have
     \[\limbhat_D[B,y]=\origin{T* v\setminus v}=\origin{T}* v\setminus v.\] 
    Since $vx\notin E(\origin{T})$, 
  by Lemma~\ref{lem:keylem}, $\limbhat_D[B,y]$ is locally equivalent to 
   \[\limbhat_{D*x}[B',y']=\origin{T*x*v\setminus v}=\origin{T}*x*v\setminus v.\]
\end{enumerate}

%
%
%
%
%

    \vskip 0.2cm
\noindent\emph{\textbf{Case 2.} $x\in V(T)$ and $x$ is linked to $v$ in $T$.}

 Since $x$ is linked to $v$ in $T$, $vx\in E(\origin{T})$. Let $y':=x$ for this case.

\begin{enumerate}
\item ($B$ is of type S and $w$ is a leaf of $B$.) Applying a local complementation at $x$ does not change the type of the bag $B$. So, $\limb_D[B,y]=T\setminus v$ and $\limb_{D*x}[B',y']=T*x\setminus v$.  Since $(T\setminus v)*x=T*x\setminus v$, $\limbhat_D[B,y]$ and $\limbhat_{D*x}[B',y']$ are locally equivalent.
    \item ($B$ is of type S and $w$ is the center of $B$.)  Applying a local complementation at $x$ changes the bag $B$ into a bag of type K, and the component $T$ into $T*x$.  
    Thus, $\limb_D[B,y]=T\wedge vy\setminus v$ and $\limb_{D*x}[B',y']=(T*x)*v\setminus v$, and
    \[\limbhat_D[B,y]=\origin{T\wedge vy\setminus v}=\origin{T}\wedge vy\setminus v.\]
    Since $vx\in E(\origin{T})$, by Lemma~\ref{lem:keylem},
     $\limbhat_D[B,y]$  is locally equivalent to
    \[\limbhat_{D*x}[B',y']=\origin{(T*x)*v\setminus v}=\origin{T}*x*v\setminus v.\]
    \item ($B$ is of type K.) Applying a local complementation at $x$ changes the bag $B$ into a bag of type S whose center is $w$. 
    $\limb_D[B,y]=T*v\setminus v$ and $\limb_{D*x}[B',y']=T*x\wedge vx\setminus v$, and we have
    \[\limbhat_D[B,y]=\origin{T* v\setminus v}=\origin{T}* v\setminus v.\]
    Since $vx\in E(\origin{T})$, by Lemma~\ref{lem:keylem}, 
     $\limbhat_D[B,y]$ is locally equivalent to 
   \[\limbhat_{D*x}[B',y']=\origin{T*x\wedge vx\setminus v}=\origin{T}*x\wedge vx\setminus v.\]
\end{enumerate}

    \vskip 0.2cm
\noindent\emph{\textbf{Case 3.} $x\notin V(T)$.}

  If $x$ has no representative in the bag $B$, then applying a local complementation at $x$ does not change the bag $B$ and the component $T$.  Therefore, we may assume that $x$ is
  represented by some vertex in $B$, that is adjacent to $w$.  In this case, $v$ is still a representative of $y$ in $D*x$. Let $y':=y$.

\begin{enumerate}
\item ($B$ is of type S and $w$ is a leaf of $B$.) If the representative of $x$ in $B$ is a leaf of $B$, then 
it is not adjacent to $w$. 
   Thus, the representative of $x$ in $B$ is a center of $B$, and applying a local complementation at
    $x$ changes $B$ into a bag of type K, and $T$ into $T*v$.  
    We have $\limb_{D*x}[B',y']=(T*v)*v\setminus v=T\setminus v=\limb_D[B,y]$.
    \item ($B$ is of type S and $w$ is the center of $B$.) Since $w$ is the center of $B$, $x$ is represented by a leaf of the bag $B$. Applying a local complementation at $x$ does
    not change the bag $B$, but it changes $T$ into $T*v$.  So we have 
    $\limb_D[B,y]=T\wedge vy\setminus v$ and $\limb_{D*x}[B',y']=(T*v)\wedge vy\setminus v=T*y*v\setminus v$, and we have
    \[\limbhat_D[B,y]=\origin{T\wedge vy\setminus v}=\origin{T}\wedge vy\setminus v.\]
      Since $vy\in E(\origin{T})$, 
    by Lemma~\ref{lem:keylem},
     $\limbhat_D[B,y]$  is locally equivalent to
    \[\limbhat_{D*x}[B',y']=\origin{T*y*v\setminus v}=\origin{T}*y*v\setminus v.\]
    \item ($B$ is of type K.) After applying a local complementation at $x$ in $D$, $B$ becomes a star with a leaf $w$, and $T$ becomes
    $T*v$.  
    Therefore, we have $\limb_{D*x}[B',y']=T*v\setminus v=\limb_D[B,y]$.
\end{enumerate}

We conclude that $\limbhat_D[B,y]$ and $\limbhat_{D*x}[B',y']$ are locally equivalent, 
and by Lemma~\ref{lem:freechoice}, we have $f_D(B,T)=f_{D*x}(B',T_x)$. 
Also, by construction $\limbtil_D[B,y]$ and $\limbtil_{D*x}[B',y']$ are canonical decompositions of
$\limbhat_D[B,y]$ and $\limbhat_{D*x}[B',y']$, respectively.
 By Lemma~\ref{lem:localdecom},  we can conclude that $\limbtil_D[B,y]$ and $\limbtil_{D*x}[B',y']$ are locally equivalent as canonical decompositions.
 \end{proof}

The following lemma is useful to reduce cases in several proofs.

 \begin{lemma}\label{lem:fixedsd} 	
	Let $B_1$ and $B_2$ be two distinct bags of $D$ and for each $i\in \{1,2\}$, let $T_i$ be the components of $D\setminus V(B_i)$ such that $T_1$ contains the bag $B_2$ and $T_2$ contains the bag $B_1$. 
	Then there exists a canonical decomposition $D'$ locally equivalent to $D$ such that for each $i\in \{1,2\}$, $D'[V(B_i)]$ is a star and $\zeta_b(D,B_i,T_i)$ is a leaf of $D'[V(B_i)]$.
\end{lemma}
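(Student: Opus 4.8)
The plan is to use Proposition~\ref{prop:preservelrw} together with Lemma~\ref{lem:localdecom} to move, one bag at a time, to a locally equivalent canonical decomposition in which $B_i$ has the desired shape, while checking that fixing one bag does not disturb the other. First I would recall the structural facts: since $G$ is distance-hereditary, by Theorem~\ref{thm:Bouchet88} each bag of $D$ (and of any locally equivalent canonical decomposition, by Lemma~\ref{lem:localdecom}) is of type $K$ or $S$. So for each $i$, the bag $B_i$ is either a complete graph or a star; if it is already a star whose leaf end on the $B_2$-side (resp.\ $B_1$-side) marked edge is $\zeta_b(D,B_i,T_i)$, there is nothing to do; otherwise we must turn it into such a star by a local complementation.

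The key step is the following local move. Fix $i$ and let $w_i := \zeta_b(D,B_i,T_i)$, the marked vertex of $B_i$ adjacent (via a marked edge) to the component $T_i$. Pick an unmarked vertex $x_i$ of $D$ lying in $T_i$ that is represented by $w_i$ (such a vertex exists because $w_i$ is a marked vertex, so it is incident with a marked edge leaving $B_i$, and following the alternating path leads to an unmarked vertex of $T_i$). Now consider $D*x_i$. By Lemma~\ref{lem:localdecom}, $D*x_i$ is again a canonical decomposition of a distance-hereditary graph, and by Remark~\ref{rem:localdecom}, the bag corresponding to $B_i$ in $D*x_i$ is $B_i * w_i$ (since $x_i$ is represented in $B_i$ by $w_i$). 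If $B_i$ is a complete graph, then $B_i * w_i$ is a star whose center is $w_i$, so $w_i$ is the \emph{center}, not a leaf — so instead I would be more careful about which vertex to complement at. The cleaner choice: if $B_i$ is a complete graph, apply a local complementation at an unmarked vertex $z_i$ represented in $B_i$ by a marked vertex $u_i \neq w_i$ (any other marked vertex of $B_i$, or an unmarked vertex of $B_i$ itself if one exists; if $|V(B_i)| = 2$ this needs separate, easy handling); then $B_i * u_i$ is a star with center $u_i$, and $w_i$ becomes a leaf, as required. If $B_i$ is already a star with center $c_i$: if $w_i$ is already a leaf we are done; if $w_i = c_i$, apply a local complementation at an unmarked vertex represented in $B_i$ by a leaf $\ell_i$, turning $B_i$ into a complete graph and then repeating the previous case — or, more directly, one local complementation at the appropriate vertex turns a star centered at $w_i$ into a star with $w_i$ a leaf, since in a star on $\geq 3$ vertices, $\star$-complementation at a leaf's representative yields a star re-centered at that leaf. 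In all cases, one local complementation (at a carefully chosen unmarked vertex whose representative in $B_i$ we control) produces a locally equivalent canonical decomposition whose $B_i$-bag is a star with $\zeta_b(\cdot, B_i, T_i)$ a leaf.

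The main obstacle is the \emph{interaction} between the two bags: after fixing $B_1$ by a local complementation at $x_1 \in V(T_1)$, we must ensure $B_2$ can still be fixed and, crucially, that fixing $B_2$ does not undo the work on $B_1$. Here I would exploit the hypothesis that $T_1$ contains $B_2$ and $T_2$ contains $B_1$, i.e.\ $B_1$ and $B_2$ lie on opposite sides of each other in the decomposition tree $T_D$. When we perform the $B_2$-fixing local complementation, we should choose its vertex $x_2$ to lie on the far side of $B_2$ from $B_1$ — that is, in a component of $D \setminus V(B_2)$ different from the one containing $B_1$ — so that this local complementation does not touch $B_1$ at all (a local complementation at $x_2$ only modifies bags containing a representative of $x_2$, and all such bags lie on the path in $T_D$ from $x_2$'s bag, which stays away from $B_1$). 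The one delicate point: when $B_2$ is a complete graph we wanted to complement at a vertex represented in $B_2$ by a marked vertex $u_2 \neq \zeta_b(\cdot,B_2,T_2)$; we must check such a $u_2$ can be chosen with its representatives on the far side of $B_1$. Since $B_2$ has $\geq 3$ vertices (bags have size $\geq 3$ in a canonical decomposition, except the trivial single-bag case which is handled directly) and only one of its marked vertices, namely $\zeta_b(D, B_2, T_2)$, points toward $B_1$, any other vertex of $B_2$ — marked or unmarked — has all its represented unmarked vertices outside the $B_1$-side component; complementing at one of those does the job. Symmetrically for $B_1$. Thus I would: (1) reduce to $|V(G)| \ge $ some small bound and dispose of tiny cases; (2) fix $B_1$ via a local complementation at an unmarked vertex whose only representative in $B_1$ is a chosen marked or unmarked vertex $\ne \zeta_b(D,B_1,T_1)$, invoking Lemma~\ref{lem:localdecom} for canonicity; (3) fix $B_2$ similarly using a vertex on the side of $B_2$ away from $B_1$, so that this step leaves $B_1$ untouched; (4) conclude that the resulting $D'$ is the desired canonical decomposition, locally equivalent to $D$.
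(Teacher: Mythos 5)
Your overall plan (fix $B_1$ first, then fix $B_2$ by operations that leave $B_1$ alone) matches the paper's, but the key local move you rely on is wrong, and it breaks exactly in the hard case of the lemma.

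The error: you assert that a local complementation at an unmarked vertex represented in a star bag by a \emph{leaf} $\ell_i$ turns the bag into a complete graph, and alternatively that ``$\star$-complementation at a leaf's representative yields a star re-centered at that leaf.'' Neither is true. The operation replaces the bag $B$ by $B*\ell_i$, and since the only neighbour of a leaf in a star is the centre, $B*\ell_i=B$: the bag is unchanged. A single local complementation can turn a complete bag into a star (complement at any vertex) or a star into a complete bag (complement at the centre), but it can never re-centre a star; re-centring needs a pivot, or two local complementations the first of which must be at a vertex represented by the \emph{centre}. Consequently your treatment of the case where $B_2$ is a star whose centre is $v_2:=\zeta_b(D,B_2,T_2)$ collapses. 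Worse, the correct one-step fixes for that case are incompatible with your step (3): the representatives of the centre $v_2$ all lie in $T_2$, i.e.\ on the $B_1$ side of $B_2$, so you cannot re-centre $B_2$ using only vertices ``on the far side of $B_2$ from $B_1$.'' This is precisely why the paper, after first observing that this case cannot occur when $B_1$ and $B_2$ are neighbour bags (the marked edge would be of forbidden type $S_pS_c$), resorts to a \emph{pivot} $D\wedge yy'$ with $y$ in the part strictly between the two bags and $y'$ beyond $B_2$; the pivot only alters bags containing an unmarked edge of the alternating $y$--$y'$ path, which avoids $B_1$ entirely.

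A secondary flaw is your non-interference argument. It is not true that the bags modified by $D*x_2$ ``lie on the path in $T_D$ from $x_2$'s bag'': the bags containing a representative of $x_2$ form a subtree that can propagate \emph{through} $B_2$ towards, and into, $B_1$ (e.g.\ when $B_2$ is complete, $v_1$ typically represents $x_2$). The conclusion that $B_1$ survives can still be rescued, but only via the observation you never make: the unique vertex of $B_1$ that can represent anything on the $B_2$ side is $v_1=\zeta_b(D,B_1,T_1)$, and once $B_1$ is a star with $v_1$ a leaf, $B_1*v_1=B_1$. As written, both the star-recentring step and the ``$B_1$ is untouched'' step are unsupported, so the proof has a genuine gap.
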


\begin{proof} Let $v_i:=\zeta_b(D,B_i,T_i)$ for $i=1,2$.  It is easy to make $B_1$ into a star bag having $v_1$ as a leaf by applying local complementations. 
We may assume that $v_1$ is a leaf of $B_1$ in $D$.  
If $v_2$ is a leaf of $B_2$, then we are done.  If $B_2$ is a complete bag, then choose an unmarked vertex $w_2$ of $D$ that is represented by a
  vertex of $B_2$ other than $v_2$.  Then applying a local complementation at $w_2$ makes $B_2$ into a star bag having $v_2$ as a leaf without changing $B_1$.  Therefore, we may assume that $v_2$ is
  the center of the star bag $B_2$.  If $B_1$ and $B_2$ are neighbor bags in $D$, then the marked edge connecting $B_1$ and $B_2$ is of type $S_pS_c$, contradicting to the assumption that $D$ is a
  canonical decomposition.  Thus, $B_1$ and $B_2$ are not neighbor bags in $D$.

	Let
	$T:=D[V(T_1)\cap V(T_2)]$ and $w_2:=\zeta_t(D, B_2, T_2)$.
	By the definition of a canonical decomposition, 
	$w_2$ is not a leaf of a star bag in $D$.
	Therefore,
	there exists an unmarked vertex $y\in V(T)$ of $D$ such that $y$ is linked to $w_2$ in $T$.
	Choose an unmarked vertex $y'$ of $D$ represented by $w_2$ in $D$.
	Since $y$ is linked to $y'$ and the alternating path from $y$ to $y'$ in $D$ passes through $B_2$ but not $B_1$,
	pivoting $yy'$ in $D$ makes $B_2$ into a star bag having $v_2$ as a leaf without changing $B_1$.
	Thus, each $v_i$ is a leaf of $(D\wedge yy')[V(B_i)]$ in $D\wedge yy'$, as required.
	\end{proof}

	We conclude the section with the following.

\begin{proposition}\label{prop:containing2} Let $B_1$ and $B_2$ be two distinct bags of $D$ and $T_1$ be a component of $D\setminus V(B_1)$
  not containing $B_2$, and $T_2$ be the component of $D\setminus V(B_2)$ containing $B_1$.  If $y_1\in V(T_1)$ and $y_2\in V(T_2)$ are two unmarked vertices of $D$ that are represented by some vertices in $B_1$ and $B_2$, respectively,  
  then $\limbhat_D[ B_1,y_1]$ is a vertex-minor of $\limbhat_D[B_2,y_2]$. Therefore $f_D(B_1,T_1)\le f_D(B_2,T_2)$.
\end{proposition}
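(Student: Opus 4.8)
\emph{Strategy.} The plan is to normalize $D$ and then exhibit (a renaming of) $\origin{T_1}$ as an induced subgraph of $\limbhat_D[B_2,y_2]$. Since, by the very definition of a limb, $\limbhat_D[B_1,y_1]$ is obtained from $\origin{T_1}$ by one of the operations $\setminus v_1$, $*v_1\setminus v_1$, $\wedge v_1y_1\setminus v_1$ (each being a sequence of local complementations possibly followed by a single vertex deletion, recalling $\wedge v_1y_1=*v_1*y_1*v_1$), it will follow that $\limbhat_D[B_1,y_1]$ is a vertex-minor of $\limbhat_D[B_2,y_2]$. The inequality $f_D(B_1,T_1)\le f_D(B_2,T_2)$ is then immediate from Lemma~\ref{lem:vm-rw}, together with Lemma~\ref{lem:freechoice} (independence of the chosen unmarked vertices) and Proposition~\ref{prop:preservelrw} (independence of the chosen representative of the local equivalence class). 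Throughout, $v_2:=\zeta_t(D,B_2,T_2)$, $v_1:=\zeta_t(D,B_1,T_1)$ and $w_1:=\zeta_b(D,B_1,T_1)$; note that $w_1$ is the unique vertex of $B_1$ that represents $y_1$, and that $v_1y_1\in E(\origin{T_1})$ since $y_1$ is linked to $v_1$ in $T_1$.

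\emph{Normalization.} I would first apply Lemma~\ref{lem:fixedsd} to the pair $(B_1,B_2)$ (its hypotheses require only $B_1\ne B_2$): this yields a canonical decomposition locally equivalent to $D$ in which $B_1$ is a star and $B_2$ is a star having $\zeta_b(D,B_2,T_2)$ as a leaf. By Proposition~\ref{prop:preservelrw}, passing to this decomposition replaces each $\limbhat_D[B_i,\cdot]$ by a locally equivalent graph, hence by a vertex-minor of it; as vertex-minors compose, we may assume $D$ itself has this form. Because $B_2$ is now a star with $\zeta_b(D,B_2,T_2)$ a leaf, the definition of a limb gives $\limb_D[B_2,y_2]=T_2\setminus v_2$, and therefore $\limbhat_D[B_2,y_2]=\origin{T_2\setminus v_2}$.

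\emph{A copy of $\origin{T_1}$, and conclusion.} For any bag $B$ and any component $T$ of $D\setminus V(B)$ one has $\origin{T}\setminus\zeta_t(D,B,T)=\origin{D}\bigl[\,V(\origin{D})\cap V(T)\,\bigr]$: recomposing $D$ in an order that leaves for last the marked edge joining $B$ to $T$, that final pivot alters no adjacency strictly inside $T$. Since deletion of an unmarked vertex commutes with recomposition, this gives $\limbhat_D[B_2,y_2]=\origin{T_2\setminus v_2}=\origin{D}[V(\origin D)\cap V(T_2)]$ and $\origin{T_1}\setminus v_1=\origin{D}[V(\origin D)\cap V(T_1)]$; as $V(T_1)\subseteq V(T_2)$, the induced subgraph of $\limbhat_D[B_2,y_2]$ on $V(\origin D)\cap V(T_1)$ equals $\origin{T_1}\setminus v_1$. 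If $w_1$ is a \emph{leaf} of $B_1$, then $\limbhat_D[B_1,y_1]=\origin{T_1}\setminus v_1$, which is exactly this induced subgraph, so we are done. If $w_1$ is the \emph{center} of $B_1$, then $\limbhat_D[B_1,y_1]=\origin{T_1}\wedge v_1y_1\setminus v_1$, and here I would pick an unmarked vertex $q$ of $D$ lying in $V(T_2)$ and represented by $v_1$ through the marked edge $v_1w_1$: such $q$ exists because $B_1$ is a star with at least three vertices, so it has a leaf $\ell$ which is either unmarked or whose pendant component is neither $T_1$ (attached at $w_1$) nor the component containing $B_2$, hence lies inside $T_2$; following the alternating walk $v_1,w_1,\ell,\dots$ reaches such a $q$ (every leaf bag of a canonical decomposition contains an unmarked vertex). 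By Lemma~\ref{lem:represent} and the definition of representatives, $q$ is adjacent in $\origin{D}$ to precisely those $p\in V(\origin D)\cap V(T_1)$ that are linked to $v_1$ in $T_1$, i.e. $N_{\origin D}(q)\cap(V(\origin D)\cap V(T_1))=N_{\origin{T_1}}(v_1)$. Hence $H:=\limbhat_D[B_2,y_2]\bigl[(V(\origin D)\cap V(T_1))\cup\{q\}\bigr]$ is isomorphic to $\origin{T_1}$ via the identity on $V(\origin D)\cap V(T_1)$ and $q\mapsto v_1$, so $\limbhat_D[B_1,y_1]=\origin{T_1}\wedge v_1y_1\setminus v_1$ corresponds to $H\wedge qy_1\setminus q$, obtained from the induced subgraph $H$ of $\limbhat_D[B_2,y_2]$ by local complementations and one deletion. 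In both cases $\limbhat_D[B_1,y_1]$ is a vertex-minor of $\limbhat_D[B_2,y_2]$, proving the proposition.

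\emph{Main obstacle.} The delicate part is the last paragraph: one must verify carefully, using Lemma~\ref{lem:represent} and the interaction of recomposition with vertex deletion and local complementation, both that the restriction of the recomposed limb to $V(\origin D)\cap V(T_1)$ really is $\origin{T_1}\setminus v_1$ and that, in the center case, a representative $q$ of $v_1$ can be chosen \emph{inside} $T_2$ (this is exactly where the normalization step is essential, since it removes the cases in which the limb at $B_2$ is not a plain vertex deletion). The size-$2$ bags that the deletions may create only affect the \emph{canonical} limb $\limbtil_D[\cdot,\cdot]$, not the recomposed graph $\limbhat_D[\cdot,\cdot]$ used here, so they cause no difficulty for this argument.
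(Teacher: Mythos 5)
Your proof is correct and follows essentially the same route as the paper: both normalize via Lemma~\ref{lem:fixedsd} so that the limb at $B_2$ becomes the plain deletion $T_2\setminus\zeta_t(D,B_2,T_2)$, and then realize $\limbhat_D[B_1,y_1]$ inside the recomposition of that sub-decomposition as a vertex-minor. The only difference is that you spell out in detail (via the proxy vertex $q$ standing in for the marked vertex $v_1$) the step the paper dismisses with ``we can easily observe,'' which is a legitimate filling-in rather than a deviation.
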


   \begin{proof}
   Let $u_2:=\zeta_t(D,B_2, T_2)$ and $v_2:=\zeta_b(D,B_2, T_2)$.
   By Lemma~\ref{lem:fixedsd}, there exists a canonical decomposition $D'$ locally equivalent to $D$ such that $B_2$ is a star bag in $D'$ with a leaf $v_2$.
   For each $i\in \{1,2\}$, let 
$T_i':=D'[V(T_i)]$, $B_i':=D'[V(B_i)]$ and
  let $y_i'$ be an unmarked vertex of $D'$ represented by $\zeta_b(D', B_i', T_i')$.
  
  Since $v_2$ is a leaf of $B'_2$ in $D'$, 
  we have $\limb_{D'}[B'_2, y_2']=T'_2\setminus v_2$.
  Because $T'_1$ is a subgraph of $T'_2\setminus v_2$,
  we can easily observe that $\limbhat_{D'}[B'_1, y_1']$ is a vertex-minor of $\limbhat_{D'}[ B'_2, y_2']$.
  Since $\limb_D[B_i, y_i]$ is locally equivalent to $\limb_{D'}[B'_i, y_i']$ for each $i\in \{1,2\}$, $\limbhat_D[B_1, y_1]$ is a vertex-minor of $\limbhat_D[B_2, y_2]$.
   We conclude that $f_D(B_1,T_1)\le f_D(B_2,T_2)$. 
\end{proof}


\section{Characterizing the linear rank-width of distance-hereditary graphs}\label{sec:dh-lrw}

In this section, we prove the main structural result of this paper, which characterizes linear rank-width on distance-hereditary graphs. 
      
\begin{theorem}\label{thm:main}  
Let $k$ be a positive integer and let $D$ be the canonical decomposition of a connected distance-hereditary graph $G$.
Then $\lrw(G)\le k$ if and only if for each bag
  $B$ of $D$, $D$ has at most two components $T$ of $D\setminus V(B)$ such that $f_D(B,T)=k$, and every other component $T'$ of $D\setminus V(B)$ satisfies that $f_D(B,T')\le k-1$.
\end{theorem}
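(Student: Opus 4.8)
The plan is to prove both directions by induction on the structure of the decomposition tree $T_D$, using the limbs to reduce to smaller distance-hereditary graphs, exactly in the spirit of the Ellis--Sudborough--Turner characterization of path-width of forests.

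\textbf{Setting up the recursion.} First I would observe that for a bag $B$ of $D$ and a component $T$ of $D \setminus V(B)$, the graph $\limbhat_D[B,y]$ (for $y \in V(T)$ represented in $B$) is itself a connected distance-hereditary graph on a smaller vertex set, and its canonical decomposition is $\limbtil_D[B,y]$. Moreover, by Proposition~\ref{prop:preservelrw}, the value $f_D(B,T) = \lrw(\limbhat_D[B,y])$ is well-defined independently of the choices and is invariant under local complementation of $D$. The key structural input I would need is a way to relate a linear layout of $G$ to linear layouts of the limbs hanging off a fixed bag $B$: given a layout of $G$ of width $\le k$, restricting to the vertices of each limb should give a layout of that limb of width $\le k$ (using Lemma~\ref{lem:vm-rw} since each $\limbhat_D[B,y]$ is a vertex-minor of $G$, plus the fact that $\cutrk$ only drops under restriction), and conversely layouts of the limbs can be concatenated through $B$ to build a layout of $G$. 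The delicate bookkeeping is how the constantly-many vertices of $B$ itself, together with the marked-vertex ``interface'', contribute to the cut-rank at the junctions — this is where the limb definition (the local complementation / pivot corrections in cases $K$, $S_p$, $S_c$) is designed to make things work.

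\textbf{The forward direction.} Assuming $\lrw(G) \le k$, fix a bag $B$ and an optimal layout $(x_1,\dots,x_n)$. For each component $T$ of $D \setminus V(B)$, the induced ordering of $V(T)$ gives, after translating through the limb construction, a layout of $\limbhat_D[B,T]$ whose width I would argue is at most $k$; hence $f_D(B,T) \le k$ for every $T$. To rule out three components with $f_D(B,T) = k$: if $T_1, T_2, T_3$ were three such components, then in the layout one of them, say $T_2$, is ``surrounded'' by the other two (its vertices do not form a prefix or a suffix among $V(T_1)\cup V(T_2) \cup V(T_3)$ plus the rest), so there is a cut separating $V(T_2)$ into two nonempty parts with the property that the cut-rank seen inside the limb of $T_2$ is at least $f_D(B,T_2) = k$ \emph{plus} a contribution of $1$ coming from the fact that the vertices of $T_1$ (or $T_3$) on the far side are adjacent, through $B$, to $T_2$ in a way that adds an independent row to the connection matrix. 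This ``$+1$'' phenomenon is precisely the linear-layout analogue of the fact that a vertex of degree $3$ in a tree forces the pathwidth up; I expect making it rigorous — tracking how adjacency through a star-bag versus a complete-bag changes the rank of $A_G[\cdot,\cdot]$ across the cut — to be the main obstacle.

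\textbf{The backward direction.} Conversely, suppose every bag $B$ has at most two components with $f_D(B,T) = k$ and all others $\le k-1$. I would pick a bag $B^\star$ realizing the ``center'' of the decomposition in the appropriate sense (e.g., iteratively moving toward the side containing a component of value $k$, as in the forest case, until at most two such components remain on opposite sides), and then build a layout by concatenating: an optimal layout of the limb of the first value-$k$ component, then the (constantly many) vertices of $B^\star$ interleaved with optimal layouts of the value-$\le k-1$ components, then an optimal layout of the second value-$k$ component. The width bound is checked at each cut: cuts internal to a value-$k$ limb have width $\le k$ by Lemma~\ref{lem:vm-rw} and Proposition~\ref{prop:preservelrw}; cuts internal to a value-$\le k-1$ limb or at a junction with $B^\star$ pick up at most one extra unit of rank from the interface, giving $\le k$; and the crucial point is that we never simultaneously pay the full $k$ from one limb \emph{and} an extra $+1$ from a value-$k$ limb on the other side, because the two value-$k$ limbs sit at the two ends of the layout. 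The recursion is then closed by applying the induction hypothesis \emph{inside} each limb: $\limbtil_D[B,T]$ is a strictly smaller canonical decomposition of a distance-hereditary graph, its own limb-values $f$ are inherited (up to the local-equivalence invariance of Proposition~\ref{prop:preservelrw} and the monotonicity of Proposition~\ref{prop:containing2}), so the same two-component condition holds there and yields the desired layout of the limb by induction. Verifying that the limb-values measured \emph{inside} $\limbtil_D[B,T]$ agree with (or are bounded by) those measured in $D$ — so that the induction hypothesis genuinely applies — is the second place I expect real work, and Proposition~\ref{prop:containing2} together with Lemma~\ref{lem:freechoice} should be the tools that make it go through.
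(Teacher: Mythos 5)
Your forward direction is essentially the paper's argument: reduce to three components $T_1,T_2,T_3$ with $f_D(B,T_i)=k$, observe that one of them, say $T_2$, contains neither the first nor the last vertex of a hypothetical width-$k$ layout, and show that every cut of the layout restricts on the limb of $T_2$ to a cut of rank at most $k-1$, contradicting $f_D(B,T_2)=k$. One caveat: the ``$+1$'' does not always arise the way you describe, from interface vertices of $T_1$ and $T_3$ sitting on opposite sides of the cut. When all such interface vertices lie on one side, the extra unit of rank has to be extracted instead from an edge of the connected graph induced by the $T_1$- and $T_3$-limbs that crosses the cut (this is precisely where one uses that the endpoints of the layout avoid $T_2$), and this case is not covered by your sketch.

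The backward direction has a genuine gap. You place ``an optimal layout of the limb of the first value-$k$ component'' at the start of the global layout and assert that ``cuts internal to a value-$k$ limb have width $\le k$ by Lemma~\ref{lem:vm-rw} and Proposition~\ref{prop:preservelrw}.'' Those results give the inequality in the wrong direction: they show the limb is a vertex-minor of $G$, so its cut-ranks are \emph{lower} bounds for the corresponding cut-ranks in $G$, not upper bounds. Concretely, let $S$ be a prefix of the value-$k$ limb's vertex set on which your chosen layout attains rank $k$, and let $N_1$ be the set of limb vertices with a neighbor outside the limb; then $\cutrk_G(S)$ can exceed the limb's cut-rank by $1$, because the rank-one connection from $N_1\cap S$ to the rest of $G$ may contribute an independent column. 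This really happens for an arbitrary optimal layout: if the limb is a long induced path with $N_1$ a single endpoint (a star bag whose marked vertex is a leaf, so the limb is literally the induced subgraph), laying the path out starting from that endpoint produces interior cuts of rank $2$ in $G$ even though the limb has linear rank-width $1$. So the two value-$k$ limbs cannot be treated as black boxes, and the induction hypothesis ``the limb admits some optimal layout'' is too weak to close the argument. What is needed is the strengthened statement of Lemmas~\ref{lem:nomarkedcenter} and~\ref{lem:equiv}: if every component of $D\setminus V(B)$ has $f$-value at most $k-1$, then $G$ has a width-$k$ layout whose \emph{first and last vertices are prescribed} unmarked vertices of $B$. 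The layout is then assembled not around a single center bag but along a path $P$ of bags in $T_D$ (Lemma~\ref{lem:sdpath2}) chosen so that every component hanging off $P$ has $f$-value at most $k-1$; each segment along $P$ gets a width-$k$ layout with prescribed endpoints at its interfaces, and these are concatenated. Your value-$k$ components must be unfolded along such a path rather than laid out as units.
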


Let $D$ be the canonical decomposition of a connected distance-hereditary graph $G$, and we fix a positive integer $k$.
 For simpler arguments, we remove $D$ from the notation $f_D(B,T)$ in this section.  
We first prove the forward direction.


\begin{proof}[Proof of the forward direction of Theorem~\ref{thm:main}]
Suppose that there exists a bag $B$ of $D$ such that $D\setminus V(B)$ has at least
 three components $T$ which induce limbs $L$ where $\origin{L}$ has linear rank-width $k$.
 
 We claim that $\lrw(G)\ge k+1$.
We may assume that $D\setminus V(B)$ has exactly three components $T_1$, $T_2$ and $T_3$, where each component $T_i$ 
satisfies $f(B,T_i)=k$.  
	For each $1\le i\le 3$,
	let $w_i:=\zeta_t(D,B,T_i)$, and
	let $N_i$ be the set of the unmarked vertices of $T_i$ that are linked to $w_i$ in $T_i$.
	Choose a vertex $u_i$ in $N_i$ and let $D_i:=\limb_D[B,u_i]$.
	We remark that $N_i$ is exactly the set of the vertices in $V(\origin{D_i})$ that have a neighbor in $V(\origin{D})\setminus V(\origin{D_i})$.
Since removing a vertex from a graph does not increase the linear rank-width,
we may assume that $V(B)=\{\zeta_b(D,B,T_i) \mid 1\le i\le 3\}$.
Now, every unmarked vertex of $D$ is contained in one of $T_1$, $T_2$, and $T_3$.
Moreover, by Proposition~\ref{prop:preservelrw} and Lemmas \ref{lem:vm-rw} and \ref{lem:localdecom}, for any canonical decomposition $D'$ locally equivalent to $D$, we have $\lrw(\origin{D}) = \lrw(\origin{D'})$ and $f(B,T_i)$ does not change.  
So, we may assume that $B$ is a complete bag of $D$.

	We first claim that $D_2=(D*u_1)[V(T_2)\setminus w_2]$.
	Since $B$ is a complete bag, by the definition of limbs, 
	$D_2=T_2*w_2\setminus w_2$.
	Since $u_1$ is linked to $w_1$ in $T_1$ and there is an alternating path from $w_1$ to $w_2$ in $D$,
	by concatenating alternating paths
	it is easy to see that 
	$(D*u_1)[V(T_2)\setminus w_2] = T_2*w_2\setminus w_2=D_2$,
	as claimed. 

  Towards a contradiction, suppose that $\origin{D}$ has a linear layout $L$ of width $k$.  Let $a$ and $b$ be the first vertex and the last vertex of $L$, respectively.  
  Since $B$ has no unmarked vertices, 
  without loss of generality, we may assume
  that $a,b\in V(\origin{D_1})\cup V(\origin{D_3})$.  With this assumption, we claim that $\origin{D_2}$ has linear rank-width at most $k-1$.

  Let $v\in V(\origin{D_2})$ and $S_v:=\{x\in V(\origin{D})\mid x\le_L v\}$
  and $T_v:=V(\origin{D})\setminus S_v$.
  Since $v$ is arbitrary, it is sufficient to show that
  $\cutrk_{\origin{D_2}}(S_v\cap V(\origin{D_2})) \le k-1$.
  
      We divide into three cases. 
      We first check two cases that are 
    	(1) ($N_1\cap S_v\neq\emptyset$ and $N_3\cap T_v\neq \emptyset$),
    	and (2) ($N_1\cap T_v\neq\emptyset$ and $N_3\cap S_v\neq \emptyset$).
      If both of them are not satisfied, then 
      we can easily deduce that $N_1\cup N_3\subseteq S_v$ or $N_1\cup N_3\subseteq T_v$.
          
    \vskip 0.2cm
\noindent\emph{\textbf{Case 1.} $N_1\cap S_v\neq\emptyset$ and $N_3\cap T_v\neq \emptyset$.}

      Let $x_1\in N_1\cap S_v$ and $x_3\in N_3\cap T_v$.
  We claim that 
  \[\cutrk_{\origin{D_2}}(S_v\cap V(\origin{D_2}))=
   \cutrk_{(\origin{D})[V(\origin{D_2}) \cup \{x_1,x_3\}]}((S_v\cap V(\origin{D_2}))\cup \{x_1\})-1.
  \]
  Because $\cutrk_{(\origin{D})[V(\origin{D_2})\cup \{x_1,x_3\}]}((S_v\cap V(\origin{D_2}))\cup \{x_1\})
   \le \cutrk_{\origin{D}}(S_v)\le k,$
  the claim implies that 
  $\cutrk_{\origin{D_2}}(S_v\cap V(\origin{D_2})) \le k-1$.
  
  Note that $x_1$ and $x_3$ have the same neighbors in $(\origin{D})[V(\origin{D_2}) \cup \{x_1,x_3\}]$ because $B$ is a complete bag.  Since
  $x_1$ is adjacent to $x_3$ in $(\origin{D})[V(\origin{D_2}) \cup \{x_1,x_3\}]$, $x_3$ becomes a leaf in $(\origin{D})[V(\origin{D_2}) \cup \{x_1,x_3\}]*x_1$ whose neighbor is $x_1$.
  Since $(D*x_1)[V(T_2)\setminus w_2]=D_2$, we have
  \[(\origin{D})[V(\origin{D_2}) \cup \{x_1,x_3\}]*x_1\setminus x_1\setminus x_3=
  (\origin{D}*x_1)[V(\origin{D_2})]=\origin{D_2}.\]
  Therefore,
   \begin{align*}
    & \cutrk_{(\origin{D})[V(\origin{D_2}) \cup \{x_1,x_3\}]}((S_v\cap V(\origin{D_2}))\cup \{x_1\}) \\
    &=\cutrk_{(\origin{D})[V(\origin{D_2}) \cup \{x_1,x_3\}]*x_1}((S_v\cap V(\origin{D_2}))\cup \{x_1\}) \\
    &=\rank \left(\begin{blockarray}{ccccccccc} &x_3&T_v\cap V(\origin{D_2}) \\
        \begin{block}{c(c|ccccccc)} x_1&1&* \\ \cline{2-5} 
        S_v\cap V(\origin{D_2}) &0 &* \\
        \end{block}
      \end{blockarray}\right) \\ 
      &=\rank \left(\begin{blockarray}{ccccccccc} &x_3&T_v\cap V(\origin{D_2}) \\
        \begin{block}{c(c|ccccccc)} x_1&1&0 \\ \cline{2-5} 
        S_v\cap V(\origin{D_2}) &0 &* \\
        \end{block}
      \end{blockarray}\right) \\
    &=\cutrk_{(\origin{D})[V(\origin{D_2})\cup\{x_1, x_3\}]*x_1\setminus x_1\setminus x_3}(S_v\cap V(\origin{D_2})) + 1\\
    &=\cutrk_{(\origin{D_2})}(S_v\cap V(\origin{D_2})) +1,
  \end{align*} 
  as claimed. 
  
    \vskip 0.2cm
\noindent\emph{\textbf{Case 2.} $N_1\cap T_v\neq \emptyset$ and $N_3\cap S_v\neq\emptyset$.}

   In the same way as \emph{\textbf{Case 1}}, we can prove $\cutrk_{\origin{D_2}}(S_v\cap V(\origin{D_2})) \le k-1$.

    \vskip 0.2cm
\noindent\emph{\textbf{Case 3.} $N_1\cup N_3\subseteq S_v$ or $N_1\cup N_3\subseteq T_v$.}

  We can assume without loss of generality that $N_1\cup N_3\subseteq S_v$ because the case when $N_1\cup N_3\subseteq T_v$ is similar.
  Since $a,b\in V(\origin{D_1})\cup V(\origin{D_3})$ and the
  graph $(\origin{D})[V(\origin{D_1})\cup V(\origin{D_3})]$ is connected,
  there exist vertices $s\in S_v\cap (V(\origin{D_1})\cup V(\origin{D_3}))$ and $t\in T_v\cap (V(\origin{D_1})\cup V(\origin{D_3}))$ such that
  \begin{enumerate}
  \item $st\in E(\origin{D})$, 
  \item $t$ has no neighbors in $N_2$.
  \end{enumerate}
 We have
   \begin{align*}
   \cutrk_{\origin{D}}(S_v) 
   &\ge\rank \left(\begin{blockarray}{ccccccccc} 
   &t&T_v\cap V(\origin{D_2}) \\
        \begin{block}{c(c|ccccccc)} 
        s&1&* \\ \cline{2-5} 
       S_v\cap V(\origin{D_2}) &0 &* \\
        \end{block}
      \end{blockarray}\right) \\ 
      &=\rank \left(\begin{blockarray}{ccccccccc} 
   &t&T_v\cap V(\origin{D_2}) \\
        \begin{block}{c(c|ccccccc)} 
        s&1&0 \\ \cline{2-5} 
       S_v\cap V(\origin{D_2}) &0 &* \\
        \end{block}
      \end{blockarray}\right) \\ 
    &=\cutrk_{\origin{D_2}}(S_v\cap V(\origin{D_2})) +1.
  \end{align*} 
  Therefore, we conclude $\cutrk_{\origin{D_2}}(S_v\cap V(\origin{D_2}))\le k-1$. 

  Thus, $\origin{D_2}$ has linear rank-width at most $k-1$, which yields a contradiction.  
\end{proof}


To prove the converse direction, we use the following lemmas.
For two linear layouts $(x_1, \ldots, x_n)$, $(y_1, \ldots, y_m)$, 
we define 
\[(x_1, \ldots, x_n)\oplus (y_1, \ldots, y_m):=(x_1, \ldots, x_n, y_1, \ldots, y_m).\]

\begin{lemma}\label{lem:nomarkedcenter} Let $B$ be a bag of $D$ of type S  with two unmarked vertices $x$ and $y$ such that $x$ is the center and $y$ is a leaf of $B$.    
  If for every component $T$ of $D\setminus V(B)$, $f(B,T)\le k-1$,
  then the graph $\origin{D}$ has a linear layout of width at most $k$ whose first and last vertices are $x$ and $y$, respectively.
  \end{lemma}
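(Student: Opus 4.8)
The statement to prove is Lemma~\ref{lem:nomarkedcenter}: given a star bag $B$ whose center $x$ and one leaf $y$ are both unmarked vertices of $\origin{D}$, and assuming $f(B,T)\le k-1$ for every component $T$ of $D\setminus V(B)$, then $\origin{D}$ admits a linear layout of width at most $k$ with first vertex $x$ and last vertex $y$. The natural approach is to build the layout by concatenating, in a carefully chosen order, linear layouts of the graphs $\origin{\limb_D[B,\cdot]}$ associated with the components $T$ of $D\setminus V(B)$, placing the vertices of $B$ in between, and then bounding the cut-rank at each prefix.

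\textbf{Key steps.} First I would set up notation: let $T_1,\dots,T_m$ be the components of $D\setminus V(B)$, with $w_i:=\zeta_b(D,B,T_i)$ the marked vertex in $V(B)$ adjacent to $T_i$; the leaves of the star $B$ other than $y$ are exactly a subset of $\{w_i\}$, and the center is $x$. For each $i$, pick an unmarked vertex $u_i$ of $T_i$ linked to $\zeta_t(D,B,T_i)$ and set $L_i:=\limb_D[B,u_i]$; by hypothesis $\lrw(\origin{L_i})\le k-1$, so choose a linear layout $\ell_i$ of $\origin{L_i}$ of width at most $k-1$. Crucially, the vertices of $\origin{L_i}$ that have neighbors \emph{outside} $\origin{L_i}$ in $\origin{D}$ are exactly those linked to $\zeta_t(D,B,T_i)$ — call this set $N_i$ — and in $\origin{D}$ all of $N_i$ attaches to the star $B$ through $w_i$. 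Because $B$ is a star with center $x$: if $w_i$ is a \emph{leaf} of $B$ then $N_i$ is adjacent (in $\origin D$) only to $x$; if $w_i$ is the \emph{center} — but the center is the unmarked vertex $x$ itself, so every $w_i$ is a leaf of $B$. Hence each $N_i$ sees exactly the single vertex $x$ outside $\origin{L_i}$, and $y$ (a leaf of $B$) also sees only $x$. So $\origin{D}$ looks like: the vertex $x$, joined to $y$ and to each set $N_i\subseteq V(\origin{L_i})$, with the $\origin{L_i}$ otherwise disjoint. I would then take the layout
\[
\ell \;:=\; (x)\,\oplus\,\ell_1\,\oplus\,\ell_2\,\oplus\,\cdots\,\oplus\,\ell_m\,\oplus\,(y).
\]
Wait — $x$ must be first and $y$ last, good. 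Now bound the width. Consider a prefix $S$. If $S=\{x\}$ or $S$ is a prefix not yet reaching $y$, the boundary $A_{\origin D}[S, V\setminus S]$ decomposes: the part inside one "active" block $\origin{L_j}$ contributes at most $\cutrk_{\origin{L_j}}(\cdot)\le k-1$ (using Lemma~\ref{lem:vm-rw}-type cut-rank preservation and that $\origin{L_j}$ restricted appropriately is an induced subgraph); the already-completed blocks $\origin{L_i}$, $i<j$, are entirely in $S$ and contribute only through the rank-$1$ star attachment to $x$; the not-yet-started blocks and $y$ are entirely outside $S$. The only vertex of $S$ with neighbors spread across many outside blocks is $x$. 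So the boundary matrix has the block structure: one general block of rank $\le k-1$ from the active limb, plus one extra row/column from $x$ (and the completed-limb vertices collapse to a single rank-$1$ pattern because they all connect to the outside only via $x$, which is in $S$ — so actually they contribute nothing to the boundary once $x\in S$). I would make this precise with a $2\times 2$-type block-rank estimate $\rank\le (k-1)+1=k$, exactly as in the Case-analysis computations already done in the forward-direction proof with the $\begin{blockarray}$ matrices. The symmetric situation holds once the prefix has swallowed some complete blocks: the key invariant is that at any prefix, the "mixed" contribution comes from at most one limb (rank $\le k-1$) plus the single coordinating vertex $x$.

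\textbf{Main obstacle.} The delicate point is the bookkeeping showing that at \emph{every} prefix of $\ell$, the cut-rank of $\origin D$ at that prefix is at most $k$, rather than accumulating contributions from several limbs simultaneously. This works precisely because $B$ is a star whose center is the unmarked vertex $x$ (so all limbs attach through leaves, i.e. rank-$1$ and all "pointing at $x$"), and because we put $x$ first: once $x$ is in the prefix, every completely-included limb contributes $0$ to the boundary, and every completely-excluded limb contributes $0$; only the unique limb straddled by the prefix contributes, and it contributes at most $k-1$, with $x$ itself adding at most $1$ more. I also need to handle the fringe issue that a deleted marked vertex may create a size-$2$ bag so that $\limb_D[B,u_i]$ is not canonical and $\origin{L_i}$ is taken via $\limbhat$; but since $f(B,T_i)$ is defined as $\lrw(\limbhat_D[B,u_i])$ and is independent of the choice of $u_i$ by Lemma~\ref{lem:freechoice}, and the set $N_i$ of "outer-adjacent" vertices is exactly the vertices linked to $\zeta_t(D,B,T_i)$ regardless, the argument is unaffected — I would just flag this and move on. Finally I would note that $\bigcup_i V(\origin{L_i})\cup\{x\}$ together with $y$ is all of $V(\origin D)$ (after possibly arguing the remaining leaves of $B$ carry no unmarked vertices, or absorbing them as degree-one vertices attached to $x$, which only helps), completing the construction and the width bound.
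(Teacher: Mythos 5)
Your proposal is correct and follows essentially the same route as the paper: the layout $(x)\oplus L_1\oplus\cdots\oplus L_m\oplus(y)$ with the limb layouts of width $\le k-1$ in between, and the observation that since every $\zeta_b(D,B,T_i)$ is a leaf of the star $B$ with unmarked center $x$, each completed or unstarted limb contributes nothing to the boundary while the one straddled limb contributes $\le k-1$ and $x$ adds $1$. The only cosmetic difference is that the paper explicitly inserts the remaining unmarked leaves $A$ of $B$ as a block just before $y$ (where prefixes have cut-rank $\le 1$), whereas you only sketch how to absorb them.
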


\begin{proof}
  Let $T_1, T_2, \ldots, T_{\ell}$ be the components of $D\setminus V(B)$ and for each $1\le i\le \ell$, let $w_i:=\zeta_t(D,B, T_i)$ and let $y_i$ be a vertex in $T_i$ represented by a vertex of $B$.  Since each $w_i$ is
  adjacent to a leaf of $B$, $T_i\setminus w_i$ is the limb of $D$ with respect to $B$ and $y_i$.
  Let $A:=V(B)\setminus (\bigcup_{1\le j\le \ell} \{\zeta_b(D, B, T_i)\})\setminus \{x,y\}$, and let $L_A$ be a sequence of $A$.		
 
 Suppose that for every component $T$ of $D\setminus V(B)$, $f(B,T)\le k-1$.  
  For each $1\le i\le \ell$, let $L_i$ be a linear layout of $\origin{T_i\setminus w_i}$ of width at most $k-1$.  We claim that
  \[L:=(x)\oplus L_1\oplus L_2 \oplus \cdots \oplus L_{\ell} \oplus L_A \oplus (y)\] is a linear layout of $\origin{D}$ of width at most $k$.  It is sufficient to prove that for every $w\in V(\origin{D})\setminus \{x,y\}$,
  $\cutrk_{\origin{D}}(\{v\mid v\le_L w\})\le k$.
    
  Let $w\in V(\origin{D})\setminus (A\cup \{x,y\})$, and let $S_w:=\{v:v\le_L w\}$ and $T_w:=V(\origin{D})\setminus S_w$.  
  Let $j$ be the integer such that $L_j$ contains $w$. Then
  \begin{align*} 
  &\cutrk_{\origin{D}}(S_w)  \\
  &=\rank \left(\begin{blockarray}{ccccccccc} &y&A&T_w\cap V(\origin{T_j}) & T_w\setminus \{y\} \setminus A\setminus V(\origin{T_j}) \\
        \begin{block}{c(c|c|c|ccccc)} x&1&1&* &* \\ \cline{2-5} S_w\cap V(\origin{T_j})&0&0 &* &0 \\ \cline{2-5} S_w\setminus \{x\} \setminus V(\origin{T_j})&0&0&0 &0 \\
        \end{block}
      \end{blockarray}\right) \\ &=\rank \left(\begin{blockarray}{ccccccccc} &y&A&T_w\cap V(\origin{T_j}) & T_w\setminus \{y\} \setminus A\setminus V(\origin{T_j}) \\
        \begin{block}{c(c|c|c|ccccc)} x&1&0&0 &0 \\ \cline{2-5} S_w\cap V(\origin{T_j})&0 &0&* &0 \\ \cline{2-5} S_w\setminus \{x\}\setminus V(\origin{T_j})&0 &0&0 &0 \\
        \end{block}
      \end{blockarray}\right) \\ &=\cutrk_{\origin{T_j\setminus w_j}}(S_w\cap V(\origin{T_j})) +1 \le (k-1)+1 = k.
  \end{align*} 
  If $w\in A$, then it is easy to show that $\cutrk_{\origin{D}}(\{v\mid v\le_L w\})\le 1$.  
  Therefore, $L$ is a linear layout of $\origin{D}$ of width $k$ whose first and last vertices are $x$ and $y$, respectively.
\end{proof}

\begin{lemma}\label{lem:equiv} Let $B$ be a bag of $D$ with two unmarked vertices $x$ and $y$.
	If for every component $T$ of $D\setminus V(B)$, $f(B,T)\le k-1$,
	then the graph $\origin{D}$ has a linear layout of width at most $k$ whose first and last vertices are $x$ and $y$, respectively.
\end{lemma}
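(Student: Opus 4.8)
The plan is to reduce to Lemma~\ref{lem:nomarkedcenter} by replacing $D$ with a carefully chosen locally equivalent canonical decomposition. Three facts make such a replacement harmless: by Lemma~\ref{lem:vm-rw}, a local complementation at an unmarked vertex does not change $\cutrk_{\origin{D}}$, hence preserves the linear rank-width of $\origin{D}$ and, more importantly, the width of any fixed linear layout; by Lemma~\ref{lem:localdecom}, it sends a canonical decomposition to a canonical decomposition; and by Proposition~\ref{prop:preservelrw}, it does not change the values $f(B,T)$. Moreover no marked edge is created or destroyed, so $x$ and $y$ stay unmarked vertices of the bag corresponding to $B$, and the components of $D\setminus V(B)$ correspond, with the same vertex sets, to the components of $D'\setminus V(B)$ for every canonical decomposition $D'$ locally equivalent to $D$. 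I will also use that reversing a linear layout preserves its width, since $\cutrk_{\origin{D}}(X)=\cutrk_{\origin{D}}(V(\origin{D})\setminus X)$.

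First I would show that, after at most two local complementations at unmarked vertices, we may assume that $D[V(B)]$ is a star bag with center $x$ and leaf $y$. By Theorem~\ref{thm:Bouchet88}, $B$ is of type $K$ or $S$. If $B$ is a star bag with center $x$, then $y$ is automatically a leaf and nothing is to be done. If $B$ is a complete bag, then $D*x$ turns $B$ into a star bag with center $x$ (since $x$ is unmarked, it is adjacent to every other vertex of the bag, and the edges among those vertices are deleted by the local complementation), with $y$ still an unmarked leaf. If $B$ is a star bag whose center $c$ is distinct from $x$, I would pick an unmarked vertex $z$ of $D$ represented by $c$ (take $z=c$ if $c$ is unmarked; if $c$ is a marked vertex, such a $z$ exists by prolonging an alternating path from $c$ through the neighbor bag of $B$ at $c$ until it reaches an unmarked vertex, which must happen at a leaf bag of the decomposition tree). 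Then $D*z$ replaces $B$ by $B*c$, which is a complete bag, and a further local complementation at $x$ brings us to a star bag with center $x$ and leaf $y$, exactly as in the previous case. In every case, the resulting decomposition $D'$ is a canonical decomposition locally equivalent to $D$ by Lemma~\ref{lem:localdecom}, and by Proposition~\ref{prop:preservelrw} every component $T'$ of $D'\setminus V(B)$ still satisfies $f(B',T')\le k-1$, where $B':=D'[V(B)]$.

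Next I would apply Lemma~\ref{lem:nomarkedcenter} to $D'$, with bag $B'$, center $x$, and leaf $y$: it produces a linear layout $L$ of $\origin{D'}$ of width at most $k$ whose first and last vertices are $x$ and $y$, respectively. Since $\origin{D'}$ is obtained from $\origin{D}$ by a sequence of local complementations, Lemma~\ref{lem:vm-rw} gives $\cutrk_{\origin{D'}}=\cutrk_{\origin{D}}$ as functions on subsets of the common vertex set, so $L$ is also a linear layout of $\origin{D}$ of width at most $k$ with first vertex $x$ and last vertex $y$. This is what we want. (In a slightly simpler bookkeeping one could also skip one local complementation in the case where $B$ is a star centered at $y$ with $x$ a leaf, by invoking Lemma~\ref{lem:nomarkedcenter} with the roles of $x$ and $y$ exchanged and then reversing the layout.)

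I expect the only genuinely delicate point to be the case analysis that puts $B$ into the star-with-center-$x$ normal form, and in particular the subcase where the center of a star bag $B$ is a marked vertex: there one must first exhibit an unmarked vertex represented by that center so that a single local complementation turns $B$ into a complete bag. An alternative for that subcase is to argue as in the proof of Lemma~\ref{lem:fixedsd}, pivoting on a suitable pair of unmarked vertices linked through $B$. Everything else is routine bookkeeping with the invariance properties of local complementations on canonical decompositions recalled in the first paragraph.
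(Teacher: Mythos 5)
Your proposal is correct and follows essentially the same route as the paper: normalize $B$ into a star bag centered at $x$ via a locally equivalent canonical decomposition, invoke Lemma~\ref{lem:nomarkedcenter}, and transfer the layout back using the invariance of the cut-rank function and of the $f$-values (Lemma~\ref{lem:vm-rw} and Proposition~\ref{prop:preservelrw}). The only cosmetic difference is that in the case where $B$ is a star not centered at $x$ you use two local complementations ($D*z*x$ with $z$ an unmarked vertex represented by the center) where the paper uses a single pivot $D\wedge xz$ (or $D\wedge xy$); both achieve the same normal form.
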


\begin{proof}
  Suppose that $f(B,T)\le k-1$ for every component $T$ of $D\setminus V(B)$. 
  We obtain a decomposition $D'$ from $D$ as follows:
  \begin{itemize}
  \item If $B$ is a complete graph, then let $D':=D*x$. 
  \item If $B$ is a star whose center is $x$, then let $D':=D$.
  \item If $y$ is the center of $B$, then let $D':=D\wedge xy$.
  \item Otherwise let $D':=D\wedge xz$ where $z$ is an unmarked vertex represented by the center of $B$. 
  \end{itemize} 
  It is
  clear that $D'[V(B)]$ is a star whose center is $x$. By Proposition~\ref{prop:preservelrw}, for each component $T$ of $D\setminus V(B)$, $f(B,T)=f_{D'}(D'[V(B)],D'[V(T)])$.
  Thus, by Lemma~\ref{lem:nomarkedcenter}, $\origin{D'}$ has a linear layout of width at most $k$ whose first and last vertices are $x$ and $y$, respectively.
  Since $\origin{D'}$ is locally equivalent to $\origin{D}$, we conclude that $\origin{D}$ also has such a linear layout.
\end{proof}


\begin{lemma}\label{lem:sdpath} 
If 
\begin{enumerate}
\item for each bag $B$ of $D$, there are at most two components $T$
  of $D\setminus V(B)$ satisfying $f(B,T)=k$, and 
  \item for every other component $T'$ of $D\setminus V(B)$, $f(B,T')\le k-1$, and 
  \item $P$ is the set of nodes $v$ in $T_D$ such that exactly two components $T$ of
  $D\setminus V(\bag{v}{D})$ satisfy $f(\bag{v}{D},T)=k$, 
  \end{enumerate} then either $P=\emptyset$ or $T_D[P]$ is a path.
\end{lemma}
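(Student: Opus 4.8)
The plan is to show that when $P\neq\emptyset$, the induced subgraph $T_D[P]$ of the tree $T_D$ is connected and has maximum degree at most $2$; since $T_D[P]$ is automatically acyclic, this makes it a path (the single-node case being the degenerate path), and if $P=\emptyset$ there is nothing to prove. I would work with the function $g$ that assigns to each ordered pair $(v,u)$ of adjacent nodes of $T_D$ the value $g(v,u):=f(\bag{D}{v},T_{v\to u})$, where $T_{v\to u}$ is the component of $D\setminus V(\bag{D}{v})$ containing $\bag{D}{u}$. As $u$ ranges over the neighbors of $v$, the components $T_{v\to u}$ are exactly the components of $D\setminus V(\bag{D}{v})$, and they are pairwise distinct; so hypotheses~(1)--(3) say precisely that $g(v,u)\le k$ always, and that $v\in P$ if and only if $g(v,u)=k$ for exactly two neighbors $u$ of $v$. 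Call an arc $v\to u$ \emph{heavy} when $g(v,u)=k$. The one external ingredient is a monotonicity statement extracted from Proposition~\ref{prop:containing2}: if $v,u$ are adjacent and $t$ is a neighbor of $v$ with $t\neq u$, then $g(v,t)\le g(u,v)$. Indeed $T_{v\to t}$ is a component of $D\setminus V(\bag{D}{v})$ that does not contain $\bag{D}{u}$, while $T_{u\to v}$ is the component of $D\setminus V(\bag{D}{u})$ containing $\bag{D}{v}$, so Proposition~\ref{prop:containing2} with $B_1=\bag{D}{v}$ and $B_2=\bag{D}{u}$ applies and gives the inequality.

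For connectedness I would take $v,w\in P$ and a node $p$ on the $v$--$w$ path in $T_D$, and show $p\in P$; this suffices, since it shows the $v$--$w$ path lies in $P$. We may assume $p\notin\{v,w\}$, so $p$ has a neighbor $p^{-}$ toward $v$ and a distinct neighbor $p^{+}$ toward $w$. Since $v\in P$ has two heavy arcs but only one neighbor of $v$ lies toward $p$, some heavy arc $v\to x$ of $v$ has the property that $x$ is not the neighbor of $v$ toward $p$; then $T_{v\to x}$ does not contain $\bag{D}{p}$, while $T_{p\to p^{-}}$ is the component of $D\setminus V(\bag{D}{p})$ containing $\bag{D}{v}$, so Proposition~\ref{prop:containing2} gives $k=g(v,x)\le g(p,p^{-})\le k$, hence $g(p,p^{-})=k$. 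By the symmetric argument using $w\in P$, $g(p,p^{+})=k$. As $p^{-}\neq p^{+}$, the node $p$ has at least two heavy arcs, hence exactly two by~(1), so $p\in P$.

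For the degree bound I would argue by contradiction: suppose $v\in P$ has three neighbors in $P$. Since only two arcs out of $v$ are heavy, one of these neighbors, say $u$, satisfies $g(v,u)\le k-1$. Applying the monotonicity statement to $v,u$ and a heavy neighbor $x\neq u$ of $v$ gives $g(u,v)\ge g(v,x)=k$, so the arc $u\to v$ is heavy; since $u\in P$ it has a second heavy arc $u\to z$ with $z\neq v$. Applying the monotonicity statement now to $u,v$ and $z$ gives $g(v,u)\ge g(u,z)=k$, contradicting $g(v,u)\le k-1$. Hence every node of $P$ has at most two neighbors in $P$, i.e.\ $T_D[P]$ has maximum degree at most $2$.

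I expect the only real care needed is in orienting Proposition~\ref{prop:containing2} correctly — keeping straight which of the two components is the larger one containing the other bag, and verifying its hypotheses (distinct bags; $T_1$ not containing $B_2$; $T_2$ containing $B_1$) at each application — together with the small bookkeeping that distinct neighbors of a node give distinct components of the complement, and the easy edge cases in the connectedness argument (for instance when $p$ is adjacent to $v$, so $p^{-}=v$). Once the monotonicity statement is in place, both parts are short, and combining connectedness, acyclicity and maximum degree at most $2$ finishes the proof.
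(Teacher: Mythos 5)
Your proof is correct and follows essentially the same route as the paper: both parts rest on Proposition~\ref{prop:containing2}, first to show that every node on the tree-path between two nodes of $P$ again lies in $P$, and then to rule out a node of $P$ with three neighbours in $P$. Your ``heavy arc'' bookkeeping and the two-step contradiction in the degree bound are just a more explicit packaging of the paper's direct observation that three neighbours in $P$ would force three components $T$ of $D\setminus V(\bag{D}{v})$ with $f(\bag{D}{v},T)=k$.
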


\begin{proof} 
  Suppose that $P\neq \emptyset$.  If $P$ has two distinct vertices $v_1$ and $v_2$, then there exists a component $T_1$ of $D\setminus V(\bag{D}{v_1})$ not containing $V(\bag{D}{v_2})$ such that
  $f(\bag{D}{v_1},T_1)=k$, and there exists a component $T_2$ of $D\setminus V(\bag{D}{v_2})$ not containing $V(\bag{D}{v_1})$ such that $f(\bag{D}{v_2},T_2)=k$.  By
  Proposition~\ref{prop:containing2}, for every node $v$ on the path from $v_1$ to $v_2$ in $T_D$, $v$ must be contained in $P$.  So $P$ induces a tree in $T_D$.

  Suppose now that $P$ contains a node $v$ having three neighbor bags $v_1, v_2,$ and $v_3$ in $P$.  Then, again by Proposition~\ref{prop:containing2}, $D$ must have three components $T$ of
  $D\setminus V(\bag{D}{v})$ such that $f(\bag{D}{v},T)=k$, which contradicts the assumption.  Therefore, $P$ induces a path in $T_D$.
\end{proof}

\begin{lemma}\label{lem:sdpath2} 
If 
\begin{enumerate}
\item for each bag $B$ of $D$, there are at most two
  components $T$ of $D\setminus V(B)$ satisfying $f(B,T)=k$, and 
  \item $f(B,T')\le k-1$ for all the other components $T'$ of $D\setminus V(B)$,  
  \end{enumerate}
  then $T_D$ has a path $P$ such that for each node $v$ in $P$
  and each component $T$ of $D\setminus V(\bag{v}{D})$ not containing a bag $\bag{w}{D}$ with $w\in V(P)$, $f(B,T)\le k-1$.
\end{lemma}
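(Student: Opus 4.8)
The plan is to build the desired path $P$ from the set $P_0$ of nodes $v$ with exactly two "heavy" components (components $T$ with $f(\bag{v}{D},T)=k$), which by Lemma~\ref{lem:sdpath} is either empty or induces a path in $T_D$. If $P_0\neq\emptyset$, I would first check that $P:=P_0$ already works: for a node $v\in P_0$ and a component $T$ of $D\setminus V(\bag{v}{D})$ not containing any bag $\bag{w}{D}$ with $w\in V(P_0)$, I claim $f(\bag{v}{D},T)\le k-1$. Indeed, $v$ has exactly two heavy components; if $T$ were one of them, then $T$ would have to contain at least one other node $w$ of $P_0$ (otherwise $P_0$ would "branch" at $v$ away from its path, contradicting that $T_D[P_0]$ is a path with $v$ on it) — more precisely, the two heavy components of $v$ lie in the two directions along the path $T_D[P_0]$, by Proposition~\ref{prop:containing2}, so any heavy component of $v$ contains a neighbor of $v$ in $P_0$. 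Hence a component avoiding all of $P_0$ is light, as required.

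If $P_0=\emptyset$, every bag has at most one heavy component, and I need to produce a nonempty path anyway. The idea is to orient each bag toward its unique heavy component (if it has one), following Proposition~\ref{prop:containing2}: if $\bag{v}{D}$ has a heavy component $T_v$, then every bag $\bag{w}{D}$ inside $T_v$ also has its heavy component "pointing back through $v$", because $\limbhat_D[\bag{v}{D}, \cdot\,]$ restricted appropriately is a vertex-minor relation going the other way. Following this orientation from any starting bag, we get a directed path in the tree $T_D$ that cannot cycle (a tree has no cycles) and cannot branch (each bag has at most one heavy component), so it terminates at a bag $v^\ast$ with no heavy component at all, or stabilizes; taking $P$ to be this directed path works: for $v\in P$ and a component $T$ of $D\setminus V(\bag{v}{D})$ containing no bag of $P$, the component $T$ is not the one the orientation points to, so either $v$ has no heavy component or its unique heavy component is the one containing the next node of $P$; in both cases $f(\bag{v}{D},T)\le k-1$. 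When even this degenerates (e.g. no bag has a heavy component), I take $P$ to be a single node, which trivially satisfies the conclusion since then all components are light.

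The cleanest uniform treatment is probably to avoid the case split: define $P$ to be a maximal path in $T_D$ such that every bag $\bag{v}{D}$ with $v\in V(P)$ has all its heavy components (there are at most two) lying in the directions of $P$, i.e.\ each heavy component of $\bag{v}{D}$ contains a neighbor of $v$ on $P$. I would show such a path exists (start from any bag; extend greedily using Proposition~\ref{prop:containing2} to absorb a heavy component into the path each time one is discovered off the current path — this process terminates because $T_D$ is finite and $P_0$, when nonempty, pins down the path by Lemma~\ref{lem:sdpath}), and then the conclusion is immediate: a component $T$ of $D\setminus V(\bag{v}{D})$ containing no bag $\bag{w}{D}$ with $w\in V(P)$ contains in particular no neighbor of $v$ on $P$, hence is not heavy, i.e.\ $f(\bag{v}{D},T)\le k-1$.

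The main obstacle I expect is the $P_0=\emptyset$ case and, relatedly, making the greedy extension argument rigorous: one must argue that when we "absorb" a newly found heavy component of some $\bag{v}{D}$ by extending $P$ toward it, we do not destroy the corresponding property at the bags already on $P$, and that the process cannot loop. Proposition~\ref{prop:containing2} is the key tool here — it forces the heavy-component relation to propagate monotonically along $T_D$ in a consistent direction — but turning this into a clean termination/consistency argument (rather than an ad hoc induction on $|V(T_D)|$) is the delicate part. A safe fallback is simply to induct on the number of bags: remove a leaf bag of $T_D$, apply the inductive hypothesis to a suitable smaller canonical decomposition, and check that re-attaching the leaf either leaves $P$ valid or requires extending it by one node, using Proposition~\ref{prop:containing2} to control $f$ at the attachment point.
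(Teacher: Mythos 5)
Your overall architecture matches the paper's: consider the set $P_0$ of nodes with exactly two heavy components, invoke Lemma~\ref{lem:sdpath} to see that $T_D[P_0]$ is a path when nonempty, and in the empty case follow unique heavy components greedily. However, your main case contains a genuine error: $P:=P_0$ does \emph{not} satisfy the conclusion. Your claim that every heavy component of a node $v\in P_0$ contains a neighbour of $v$ in $P_0$ is true only for \emph{interior} nodes of the path $T_D[P_0]$. At an endpoint $v_1$ of that path (and in particular when $\abs{P_0}=1$), only one component of $D\setminus V(\bag{D}{v_1})$ can contain nodes of $P_0$, so at least one of the two heavy components of $v_1$ contains no bag $\bag{D}{w}$ with $w\in P_0$ --- exactly the configuration the lemma must exclude. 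The parenthetical ``otherwise $P_0$ would branch at $v$'' has no force at an endpoint. The repair is what the paper does: extend $P_0$ by one additional node at each end, namely the neighbour of $B_1$ (resp.\ $B_n$) inside its dangling heavy component, and then check via Proposition~\ref{prop:containing2} that each new end node, not being in $P_0$, has at most one heavy component and that this component is the one pointing back into $P$. Your proposed ``uniform treatment'' and the induction-on-bags fallback are only sketches, and you flag them as such, so they do not close this gap.

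A secondary inaccuracy: in the $P_0=\emptyset$ case you assert that if $\bag{D}{v}$ has heavy component $T_v$ then every bag \emph{inside} $T_v$ has its heavy component pointing back through $v$. Proposition~\ref{prop:containing2} propagates in the opposite direction: heaviness of a component $T_1$ of $D\setminus V(B_1)$ forces heaviness of the component of $D\setminus V(B_2)$ containing $B_1$ only for bags $B_2$ lying \emph{outside} $T_1$; for a bag inside $T_v$ nothing is forced, and such a bag may have no heavy component at all --- which is precisely how the greedy path terminates. The greedy construction itself is sound (it is the paper's second case, where maximality guarantees that the last node either has no heavy component or has its unique heavy component pointing back along $P$), but the consistency argument you offer for it rests on the wrong form of the proposition.
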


\begin{proof} Let $P'$ be the set of nodes $v$ in $T_D$ such that exactly two components $T$ of $D\setminus V(\bag{D}{v})$ satisfy $f(\bag{D}{v},T)=k$. By Lemma~\ref{lem:sdpath}, either $P'=\emptyset$
  or $T_D[P']$ is a path.
  
  We first assume that $P'\neq \emptyset$.  Let $T_D[P']=v_1v_2 \cdots v_n$, and for each $1\le i\le n$, let $B_i:=\bag{D}{v_i}$.  By the definition, there exists a component $T_1$ of $D\setminus V(B_1)$ such that $T_1$
  does not contain a bag $\bag{D}{w}$ with $w\in V(P')$ and $f(B_1,T_1)=k$.  Let $v_0$ be the node of $T_D$ such that $\bag{D}{v_0}$ is the bag of $T_1$ that is the neighbor bag of $B_1$ in $D$.  Similarly, there exists a
  component $T_n$ of $D\setminus V(B_n)$ such that $T_n$ does not contain a  bag $\bag{D}{w}$ with $w\in V(P')$ and $f(B_n,T_n)=k$.  Let $v_{n+1}$ be the node of $T_D$ such that $\bag{D}{v_{n+1}}$ is the bag of $T_n$ that is
  the neighbor bag of $B_n$ in $D$.  Then $P:=v_0v_1v_2 \cdots v_nv_{n+1}$ is the required path.
  
  Now we assume that $P'=\emptyset$.  We choose a node $v_0$ in $T_D$ and let $B_0:=\bag{D}{v_0}$.  If $D$ has no component $T$ of $D\setminus V(B_0)$ such that $f(B_0,T)=k$, then $P:=v_0$ satisfies
  the condition.  If not, we take a maximal path $P:=v_0v_1 \cdots v_{n+1}$ in $T_D$ such that (with $B_i:=\bag{D}{v_i}$)
  \begin{itemize}[label={--}]
  \item for each $0\le i\le n$, $D\setminus V(B_i)$ has one component $T_i$ such that $f(B_i,T_i)=k$, and $B_{i+1}$ is the bag of $T_i$ that is the neighbor bag of $B_i$ in $D$.
  \end{itemize} 

  By the maximality of $P$, $P$ is a path in $T_D$ such that for each node $v$ of $P$ and a component $T$ of $D\setminus V(\bag{D}{v})$ not containing a bag $\bag{D}{w}$ with $w\in V(P)$, $f(\bag{D}{v},T)\le
  k-1$. 
\end{proof}

We are now ready to prove the converse direction of the proof of Theorem~\ref{thm:main}.

\begin{proof}[Proof of the backward direction of Theorem~\ref{thm:main}]
Suppose that for each bag $B$ of $D$, at most two components $T$ of $D\setminus V(B)$ induce limbs $L$ where $\origin{L}$ has linear rank-width exactly $k$, and all other component $T'$ of $D\setminus V(B)$ induce limbs $L'$ where $\origin{L'}$ has linear rank-width at most $k-1$.  
We claim that $\lrw(G)\le k$.

 Let $P:=v_0v_1 \cdots v_nv_{n+1}$ be the path in $T_D$ such that 
 \begin{itemize}
 \item for each node $v$ in $P$ and a component $T$ of $D\setminus V(\bag{D}{v})$ not containing a bag $\bag{D}{w}$ with
  $w\in P$, $f(\bag{D}{v},T)\le k-1$ (such a path exists by Lemma \ref{lem:sdpath2}).  
  \end{itemize}
  For each $0\leq i \leq n+1$, let $B_i:=\bag{D}{v_i}$. 
   If $P$ consists of one vertex, then by Lemma~\ref{lem:equiv},
  $\lrw(G)=\lrw(\origin{D})\le k$. Thus, we may assume that $n\ge 0$.
 
  By adding unmarked vertices in $B_0$ and $B_{n+1}$ if necessary, we assume that $B_0$ and $B_{n+1}$ have unmarked vertices $a_0$ and $b_{n+1}$ in $D$, respectively.
  
   For each $0\le i\le n$, let $b_i$ be a marked vertex of $B_i$ and let $a_{i+1}$ be a marked vertex $B_{i+1}$ such that 
 $b_ia_{i+1}$ is the marked edge connecting $B_i$ and $B_{i+1}$. 
  Let $D_0$ be the component of $D\setminus V(B_1)$ containing the bag $B_0$.
  Let $D_{n+1}$ be the component of $D\setminus V(B_n)$ containing the bag $B_{n+1}$.
 For each $1\le i\le n$,
 let $D_i$ be the component of $D\setminus (V(B_{i-1})\cup V(B_{i+1}))$ containing the bag $B_i$. 
 Notice that the vertices $a_i$ and $b_i$ are unmarked vertices in $D_i$.

  Since every component $T$ of $D_i\setminus V(B_i)$ satisfies that $f_{D_i}(B_i,T) \leq k-1$, by Lemma~\ref{lem:equiv}, $\origin{D_i}$ has a linear layout $L_i'$ of width $k$ whose first and last vertices are $a_i$ and $b_i$, respectively.  For each $1\leq i \leq n$, let $L_i$ be the linear layout obtained from $L_i'$ by removing $a_i$ and $b_i$.  Let $L_0$ and $L_{n+1}$
  be obtained from $L_0'$ and $L_{n+1}'$ by removing $b_0$ and $a_{n+1}$, respectively. Then we can easily check that
  $L:= L_0\oplus L_1\oplus \cdots \oplus L_{n+1}$ is a linear layout of $\origin{D}$ having width at most $k$.  Therefore $\lrw(\origin{D})\le k$.
\end{proof}




\section{Canonical limbs}\label{sec:lemmaonCL}
    

We investigate useful properties of canonical limbs which will be used to design our algorithm.
Note that for recursively taking limbs, we need to transform an obtained limb into a canonical limb because limbs are only defined on canonical decompositions.
Let $D$ be the canonical decomposition of a connected distance-hereditary graph.

\begin{proposition}\label{prop:order} 
Let $B_1$ and $B_2$ be two distinct bags of $D$ and 
  for each $i\in \{1,2\}$, let $T_i$ be the component of $D\setminus V(B_i)$, $w_i:=\zeta_b(D, B_i, T_i)$ and $y_i$ be an unmarked vertex of $D$ represented by $w_i$ such that
  \begin{itemize}
  \item
  $T_1$ contains the bag $B_2$ and $T_2$ contains the bag $B_1$, and
\item $V(B_{1})$ induces a bag in $\limbtil_D[B_2,y_2]$, and $V(B_{2})$ induces a bag in $\limbtil_D[B_1,y_1]$.
\end{itemize}
  We define that
  \begin{itemize}
  \item $B_{1}':=(\limbtil_D[B_2,y_2])[V(B_{1})]$,
  \item $B_{2}':=(\limbtil_D[B_1,y_1])[V(B_{2})]$, 
  \item $y_{1}'$ is an unmarked vertex of $\limbtil_D[B_2,y_2]$  represented by $w_{1}$, and
  \item $y_{2}'$ is an unmarked vertex of $\limbtil_D[B_1,y_1]$ represented by $w_{2}$.  
  \end{itemize}
  Then
  $\limbtil_{\limbtil_D[B_1,y_1]}[B_2', y_2']$ is locally equivalent to $\limbtil_{\limbtil_D[B_2,y_2]}[B_1', y_1']$.
\end{proposition}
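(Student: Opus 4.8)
The plan is to put $B_1$ and $B_2$ into a simple normal form, to show that \emph{both} doubly-iterated canonical limbs are then the canonical decomposition of one and the same induced subgraph $G[Y_1\cap Y_2]$ of $G$, and to conclude by uniqueness of canonical decompositions.

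First I would observe that the hypotheses force $B_1$ and $B_2$ to be non-neighbour bags of $D$: forming a limb always deletes the vertex $\zeta_t(D,B_i,T_i)$, and if $B_1$ and $B_2$ were neighbours then $\zeta_t(D,B_2,T_2)\in V(B_1)$, so $V(B_1)$ could not induce a bag in $\limbtil_D[B_2,y_2]$, contradicting the hypothesis. Next, using Lemma~\ref{lem:fixedsd} and Proposition~\ref{prop:preservelrw} --- replacing $D$ by a locally equivalent canonical decomposition changes each canonical limb only up to local equivalence, and, since a local complementation at an unmarked vertex fixes the decomposition tree and the vertex set of every bag, it preserves which vertex sets induce bags --- it suffices to treat the case where, for $i=1,2$, the bag $B_i$ is a star with $w_i:=\zeta_b(D,B_i,T_i)$ a leaf. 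In this normal form the limb is just a vertex deletion: writing $v_i:=\zeta_t(D,B_i,T_i)$ we have $\limb_D[B_i,y_i]=T_i\setminus v_i$, hence $\limbhat_D[B_i,y_i]=\origin{T_i}\setminus v_i=G[Y_i]$, where $Y_i$ is the set of unmarked vertices of $D$ lying in $T_i$ and $G:=\origin{D}$; so $\limbtil_D[B_i,y_i]$ is the canonical decomposition of $G[Y_i]$.

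The crux is the claim that $B_2':=(\limbtil_D[B_1,y_1])[V(B_2)]$ is again the star bag $B_2$ with $w_2$ a leaf (and symmetrically for $B_1'$). Since $B_1,B_2$ are non-neighbours, $v_1\notin V(B_2)$ and $v_1$ has no neighbour in $V(B_2)$, so $B_2$ is unchanged when we pass to $\limb_D[B_1,y_1]=T_1\setminus v_1$; it then remains to see that the canonicalisation turning $\limb_D[B_1,y_1]$ into $\limbtil_D[B_1,y_1]$ leaves $B_2$ untouched as well. This is where the hypothesis that $V(B_2)$ induces a bag in $\limbtil_D[B_1,y_1]$ is essential: inspecting the canonicalisation rules (absorbing a size-$2$ bag and then rerouting or recomposing the incident marked edge), any step that would modify $B_2$ would also change the vertex set $V(B_2)$, which is impossible; hence no such step occurs and $B_2'=B_2$. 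I expect this bookkeeping --- controlling how the canonicalisation of a limb can propagate along the decomposition tree and ruling out that it reaches $B_2$ --- to be the main obstacle.

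Granting the claim, the rest is short. As $B_2'$ is a star and the marked vertex $w_2$ representing $y_2'$ is one of its leaves, $\limb_{\limbtil_D[B_1,y_1]}[B_2',y_2']=T_2'\setminus v_2'$, where $T_2'$ is the component of $\limbtil_D[B_1,y_1]\setminus V(B_2')$ containing $y_2'$ and $v_2':=\zeta_t(\limbtil_D[B_1,y_1],B_2',T_2')$. Recomposing this piece of the split decomposition, and using that $\origin{\limbtil_D[B_1,y_1]}=G[Y_1]$ and that the unmarked vertices of $T_2'$ are exactly those lying strictly between $B_1$ and $B_2$ in $T_D$, i.e.\ the set $Y_1\cap Y_2$, we obtain
\[
\limbhat_{\limbtil_D[B_1,y_1]}[B_2',y_2']=\origin{T_2'}\setminus v_2'=\bigl(\origin{\limbtil_D[B_1,y_1]}\bigr)[Y_1\cap Y_2]=G[Y_1\cap Y_2].
\]
By the symmetric computation, $\limbhat_{\limbtil_D[B_2,y_2]}[B_1',y_1']=G[Y_1\cap Y_2]$ as well. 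Since $G[Y_1\cap Y_2]$ is nonempty and connected (Lemma~\ref{lem:connected} applied to the inner limb), it has a unique canonical decomposition by Theorem~\ref{thm:CED}, so $\limbtil_{\limbtil_D[B_1,y_1]}[B_2',y_2']$ and $\limbtil_{\limbtil_D[B_2,y_2]}[B_1',y_1']$ are both equal to it, in particular locally equivalent. Finally, undoing the normal-form reduction by iterated applications of Proposition~\ref{prop:preservelrw} (once per local complementation used, applied both to $D$ itself and inside the outer limbs) transfers the conclusion to the original $D$.
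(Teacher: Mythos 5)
Your proposal is correct and follows essentially the same route as the paper's proof: reduce via Lemma~\ref{lem:fixedsd} and Proposition~\ref{prop:preservelrw} to the normal form where each $B_i$ is a star with $w_i$ a leaf, and then observe that in this normal form the two iterated limbs coincide (the paper compares them directly as the marked graph $T'\setminus v_1\setminus v_2$, while you recompose both to $G[Y_1\cap Y_2]$ and invoke uniqueness of canonical decompositions, which amounts to the same thing). Your extra bookkeeping showing that the canonicalisation step cannot disturb $B_2$ is a detail the paper leaves implicit, and it is argued correctly from the hypothesis that $V(B_2)$ induces a bag.
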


\begin{proof}
	 For each $i\in \{1,2\}$, let $v_i:=\zeta_t(D, B_i, T_i)$.
	By Lemma~\ref{lem:fixedsd}, 
 	  there exists a canonical decomposition $D'$ locally equivalent to $D$ such that for each $i\in \{1,2\}$, $w_i$ is a leaf of $D'[V(B_i)]$ in $D'$.
 	  	 For each $i\in \{1,2\}$, let $P_i:=D'[V(B_i)]$, $T_i':=D'[V(T_i)]$, and $z_i$ be an unmarked vertex of $D'$ represented by $w_i$.  	
	 We define that 
  \begin{itemize}
  \item $T':=D'[V(T_1')\cap V(T_2')]$,
   \item $P_{1}':=(\limbtil_{D'}[P_2,z_2])[V(P_{1})]$, 
  \item $P_{2}':=(\limbtil_{D'}[P_1,z_1])[V(P_{2})]$, 
  \item let $z_{1}'$ be an unmarked vertex of $\limbtil_{D'}[P_2,z_2]$ represented  by $w_{1}$,
    \item let $z_{2}'$ be an unmarked vertex of $\limbtil_{D'}[P_1,z_1]$ represented by $w_{2}$. 
  \end{itemize}

 	 Since $D$ is locally equivalent to $D'$, 
 	 by Proposition~\ref{prop:preservelrw},
 	 $\limbtil_{D}[B_1,y_1]$ is locally equivalent to 
 	 $\limbtil_{D'}[P_1,z_1]$.
 	 Again, since $\limbtil_{D}[B_1,y_1]$ is locally equivalent to 
 	 $\limbtil_{D'}[P_1,z_1]$,
 	 by Proposition~\ref{prop:preservelrw},
 	 \[\text{$\limbtil_{\limbtil_{D}[B_1,y_1]}[B_2', y_2']$ is locally equivalent to 
 	 $\limbtil_{\limbtil_{D'}[P_1,z_1]}[P_2', z_2']$.}\]
 	 Similarly, we obtain that
 	 \[\text{$\limbtil_{\limbtil_{D}[B_2,y_2]} [B_1', y_1']$ is locally equivalent to 
 	 $\limbtil_{\limbtil_{D'}[P_2,z_2]}[P_1', z_1']$.}\]
	Since each $v_i$ is a leaf of $P_i$ in $D'$,
 	  	  	  \[\text{$\limb_{\limb_{D'}[P_1,z_1]}[P_2', z_2']=T'\setminus v_1\setminus v_2=\limb_{\limb_{D'}[P_2,z_2]}[P_1', z_1']$,}\]
		and it implies that
 	  \[\text{$\limbtil_{\limbtil_{D'}[P_1,z_1]}[P_2', z_2']=\limbtil_{\limbtil_{D'}[P_2,z_2]}[P_1', z_1']$.}\]	
			Therefore,
	$\limbtil_{\limbtil_D[B_1,y_1]}[B_2', y_2']$ is locally equivalent to 
	$\limbtil_{\limbtil_D[B_2,y_2]}[B_1', y_1']$.
	   \end{proof}

\begin{proposition}\label{prop:order2}
Let $B_1$ and $B_2$ be two distinct bags of $D$. 
 Let $T_1$ be a component of $D\setminus V(B_1)$ that does not contain $B_2$ and $T_2$ be the component of $D\setminus V(B_2)$
  containing the bag $B_1$.  
  For $i\in \{1,2\}$, let $w_i:=\zeta_b(D, B_i, T_i)$, and $y_i$ be an unmarked vertex of $D$ represented by $w_i$.
    If $V(B_1)$ induces a bag $B_1'$ of $\limbtil_{D}[B_2,y_2]$, then $\limbtil_{D}[B_1,y_1]$ is locally equivalent to $\limbtil_{\limbtil_{D}[B_2,y_2]}[B_1', y_1']$, 
  where $y_1'$ is an unmarked vertex of $\limbtil_D[B_2,y_2]$ represented by $w_1$.  
\end{proposition}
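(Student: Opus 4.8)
The plan is to reduce to the situation where $B_2$ is a star having $w_2$ as a leaf, and then to observe that in that situation forming the limb at $B_2$ merely deletes the vertex $v_2:=\zeta_t(D,B_2,T_2)$, an operation confined to the $B_2$-side of $B_1$; the sub-decomposition $T_1$ and the bag $B_1$ (together with the marked edge joining them) then stay literally intact, so that the two canonical limbs coincide, and the general choice of $y_1'$ is handled by Lemma~\ref{lem:freechoice}.

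\emph{The reduction.} Let $\tilde T_1$ be the component of $D\setminus V(B_1)$ that contains $B_2$. Applying Lemma~\ref{lem:fixedsd} to $B_1,B_2$ with $\tilde T_1,T_2$ in the roles of its two distinguished components, I would obtain a canonical decomposition $D'$ locally equivalent to $D$ such that $D'[V(B_2)]$ is a star with $w_2$ a leaf. Choosing unmarked vertices $z_1,z_2$ of $D'$ represented by $w_1,w_2$ and writing $B_i'':=D'[V(B_i)]$, Proposition~\ref{prop:preservelrw} gives that $\limbtil_D[B_1,y_1]$ is locally equivalent to $\limbtil_{D'}[B_1'',z_1]$ as a canonical decomposition, that $\limbtil_D[B_2,y_2]$ is locally equivalent to $\limbtil_{D'}[B_2'',z_2]$ (so these two have isomorphic decomposition trees and $V(B_1)$ still induces a bag of the latter), and — applying Proposition~\ref{prop:preservelrw} once more to this last locally equivalent pair — that $\limbtil_{\limbtil_D[B_2,y_2]}[B_1',y_1']$ is locally equivalent to the analogous canonical limb built inside $\limbtil_{D'}[B_2'',z_2]$. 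Since $D'$ satisfies all the hypotheses of the proposition with respect to $B_1,B_2,T_1,T_2,w_1,w_2$, with the added feature that $B_2''$ is a star with leaf $w_2$, it suffices to prove the statement under this extra assumption; from now on I rename $D',B_i'',z_i$ back to $D,B_i,y_i$.

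\emph{The main argument.} With $B_2$ a star and $w_2$ a leaf, $\limb_D[B_2,y_2]=T_2\setminus v_2$ where $v_2:=\zeta_t(D,B_2,T_2)$, independently of $y_2$. Since $v_2\notin V(\limbtil_D[B_2,y_2])$ whereas $V(B_1)\subseteq V(T_2)$ induces a bag there, necessarily $v_2\notin V(B_1)$. The only canonicalization cleanup occurring when passing from $T_2\setminus v_2$ to $\limbtil_D[B_2,y_2]$ takes place at the former bag of $v_2$, which is the neighbor bag of $B_2$ inside $T_2$ and hence lies strictly on the $B_2$-side of $B_1$; and a cleanup step that replaces a marked vertex of $B_1$ by an unmarked one, or that recomposes $B_1$ across a newly added marked edge, would change $V(B_1)$ and is therefore excluded by the hypothesis. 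Consequently $B_1'$ equals $B_1$ as a graph, the marked edge joining $w_1$ and $\zeta_t(D,B_1,T_1)$ survives in $\limbtil_D[B_2,y_2]$, the component of $\limbtil_D[B_2,y_2]\setminus V(B_1')$ containing $y_1$ is exactly $T_1$, and $w_1$ remains a marked vertex of $B_1'$ representing, in particular, the unmarked vertex $y_1$. Hence the limb at $(B_1',y_1)$ inside $\limbtil_D[B_2,y_2]$ is formed from exactly the same data — the sub-decomposition $T_1$, the vertex $\zeta_t(D,B_1,T_1)$, the type of the bag, and whether $w_1$ is a leaf or the center — as the limb at $(B_1,y_1)$ inside $D$, so $\limb_{\limbtil_D[B_2,y_2]}[B_1',y_1]=\limb_D[B_1,y_1]$ and thus $\limbtil_{\limbtil_D[B_2,y_2]}[B_1',y_1]=\limbtil_D[B_1,y_1]$. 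Finally, $y_1$ and $y_1'$ are both unmarked vertices of $\limbtil_D[B_2,y_2]$ represented by $w_1$, so Lemma~\ref{lem:freechoice} yields that $\limbtil_{\limbtil_D[B_2,y_2]}[B_1',y_1]$ and $\limbtil_{\limbtil_D[B_2,y_2]}[B_1',y_1']$ are locally equivalent, which finishes the proof.

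The step I expect to require the most care is establishing that deleting $v_2$ and then canonicalizing leaves $T_1$, $B_1$, and the marked edge between them completely untouched: one must locate the size-$2$ bag possibly created at $v_2$ on the $B_2$-side of $B_1$ and use the hypothesis that $V(B_1)$ still induces a bag to rule out the cleanup altering or absorbing $B_1$. Once this is pinned down, the two limb constructions produce literally the same split decomposition (rather than merely locally equivalent ones), and Lemma~\ref{lem:freechoice} absorbs the freedom in the choice of $y_1'$.
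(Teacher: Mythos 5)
Your proposal is correct and follows essentially the same route as the paper's proof: reduce via Lemma~\ref{lem:fixedsd} to the case where $B_2$ is a star with $w_2$ a leaf, transfer everything with Proposition~\ref{prop:preservelrw}, and observe that the two limb constructions then coincide. The only difference is that you spell out in detail why the canonicalization cleanup after deleting $v_2$ cannot disturb $B_1$, $T_1$, or the marked edge between them, and you invoke Lemma~\ref{lem:freechoice} to absorb the choice of $y_1'$ --- a step the paper's proof asserts without elaboration.
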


\begin{proof}
	Suppose $V(B_1)$ induces a bag $B_1'$ of $\limbtil_D[B_2,y_2]$ and 
	$y_1'$ is an unmarked vertex represented in $\limbtil_D[B_2,y_2]$ by $w_1$. 
	By Lemma~\ref{lem:fixedsd}, there exists a canonical decomposition $D'$ locally equivalent to $D$ such that
	$w_2$ is a leaf of a star bag $D'[V(B_2)]$.
	We define 
	\begin{itemize}
 	\item $P_1:=D'[V(B_1)]$,
       \item $P_2:=D'[V(B_2)]$,
	\item for each $i\in \{1,2\}$, $z_i$ is an unmarked vertex of $D'$ represented by $w_i$,
	\item $P_1':=(\limbtil_{D'}[P_2,z_2])[V(B_1)]$, and
	\item $z_1'$ is an unmarked vertex of $\limbtil_{D'}[P_2,z_2]$ represented by $w_1$.
	\end{itemize}

 	Since $D$ is locally equivalent to $D'$,
 	by Proposition~\ref{prop:preservelrw},
 	$\limbtil_D[B_1,y_1]$ is locally equivalent to $\limbtil_{D'}[P_1,z_1]$.
 	Similarly, we obtain that
 	$\limbtil_{D}[B_2,y_2]$ is locally equivalent to $\limbtil_{D'}[P_2,z_2]$.
 	Since $\limbtil_{D}[B_2,y_2]$ is locally equivalent to $\limbtil_{D'}[P_2,z_2]$,
 	by Proposition~\ref{prop:preservelrw},
 	\[\text{$\limbtil_{\limbtil_{D}[B_2,y_2]} [B_1', y_1']$ is locally equivalent to
 	$\limbtil_{\limbtil_{D'}[P_2,z_2]}[P_1', z_1']$.}\]
 	Since $w_2$ is a leaf of $P_2$ in $D'$,
 	 $\limbtil_{D'}[P_1,z_1]=\limbtil_{\limbtil_{D'}[P_2,z_2]}[P_1', z_1']$,
 	 and therefore, 
 	 $\limbtil_{D}[B_1,y_1]$ is locally equivalent to
 	 $\limbtil_{\limbtil_{D}[B_2,y_2]} [B_1', y_1'],$ as required.
 \end{proof}

\section{Computing the linear rank-width of distance-hereditary graphs}\label{sec:computing-lrw}

We describe an algorithm to compute the linear rank-width of distance-hereditary graphs.  Since the linear rank-width of a graph is the maximum linear rank-width over all its connected components, we
will focus on connected distance-hereditary graphs.

\begin{theorem}\label{thm:mainalg} 
	The linear rank-width of every connected distance-hereditary graph with $n$ vertices can be computed in
	time $\cO (n^2\cdot \log_2 n)$. Moreover, 
a linear layout of the graph witnessing the linear rank-width can be computed with the same time complexity.
\end{theorem}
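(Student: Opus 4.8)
The algorithm follows the recursive structure exposed by Theorem~\ref{thm:main}: for a connected distance-hereditary graph $G$ with canonical decomposition $D$, we have $\lrw(G)\le k$ if and only if for every bag $B$ of $D$, at most two components $T$ of $D\setminus V(B)$ have $f_D(B,T)=k$ and all others have $f_D(B,T)\le k-1$. The plan is to first compute, for every bag $B$ and every component $T$ of $D\setminus V(B)$, the value $f_D(B,T)$; then to find the smallest $k$ for which the condition of Theorem~\ref{thm:main} holds, and finally to reconstruct the layout by following the proof of the backward direction (splicing together layouts along the path $P$ produced by Lemma~\ref{lem:sdpath2}, with each piece obtained from Lemma~\ref{lem:equiv}/\ref{lem:nomarkedcenter}).

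\textbf{Computing the $f$-values.} First I would invoke Theorem~\ref{thm:CED} to build $D$ and $T_D$ in time $\cO(n+|E(G)|)=\cO(n^2)$ (the split decomposition of a DH graph has $\cO(n)$ bags and vertices). The quantities $f_D(B,T)$ are exactly the linear rank-widths of the graphs $\limbhat_D[B,y]$, i.e.\ of certain vertex-minors of $G$, and these are themselves distance-hereditary, so one can hope to compute them recursively. The key is that $\limbhat_D[B,y]$ is again a connected distance-hereditary graph whose canonical decomposition is the canonical limb $\limbtil_D[B,y]$, and Propositions~\ref{prop:order} and~\ref{prop:order2} say that taking canonical limbs behaves coherently under iteration: the canonical decomposition of a limb-of-a-limb is (up to local equivalence) independent of the order, and nested limbs of $D$ are again limbs of canonical limbs of $D$. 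This lets us root $T_D$ arbitrarily and process it bottom-up: for a bag $B$ with a distinguished "parent" component $T^\uparrow$, the value $f_D(B,T^\uparrow)$ depends only on the subtree hanging off $B$ away from $T^\uparrow$, which is exactly a canonical limb, and its $f$-values have already been computed for the children bags. At each bag we only need to combine the already-computed values of the neighbor subtrees using Theorem~\ref{thm:main} applied to the limb (a local complementation and deletion performed on $D$ in $\cO(|V(B)|)$ time per bag, by the description of how a size-$2$ bag is absorbed). Since $\lrw\le\log_2 n$ by Lemma~\ref{lem:lrwtrivialbound}, each $f$-value is an integer in $\{0,\dots,\lfloor\log_2 n\rfloor\}$, and the recursion has depth bounded by the number of times a limb operation can strictly decrease the graph; the bookkeeping needed to process each directed edge $(B,T)$ of $T_D$ in amortized $\cO(n)$ time gives the $\cO(n^2\log_2 n)$ bound, with the $\log_2 n$ factor absorbing both the range of $k$ and a logarithmic-depth re-use of limb computations.

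\textbf{From $f$-values to $\lrw(G)$ and to a layout.} Once every $f_D(B,T)$ is known, determining $\lrw(G)$ is immediate: it is the least $k$ such that each bag sees at most two components with $f$-value $k$ and the rest with $f$-value $\le k-1$; this is an $\cO(n)$ scan once we observe that the relevant $k$ lies in $\{0,\dots,\lfloor\log_2 n\rfloor\}$ and that for fixed $k$ the test is a single pass over $T_D$. For the layout, I would follow the constructive proof of the backward direction of Theorem~\ref{thm:main}: extract the path $P=v_0v_1\cdots v_{n+1}$ of Lemma~\ref{lem:sdpath2}, then for each $i$ recursively produce a width-$k$ layout of $\origin{D_i}$ with prescribed first/last vertices $a_i,b_i$ via Lemma~\ref{lem:equiv} (which reduces to Lemma~\ref{lem:nomarkedcenter} after one local complementation/pivot), and concatenate with $\oplus$. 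The recursion structure mirrors the one used for the $f$-values, so it runs within the same time bound.

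\textbf{Main obstacle.} The real difficulty is the amortized complexity of the first phase: a naive recomputation of $\lrw(\limbhat_D[B,y])$ from scratch at each of the $\cO(n)$ directed edges of $T_D$ would cost $\cO(n)$ per node just to re-examine the subtree, giving $\cO(n^2)$ per level and too much overall unless the $\log_2 n$ depth is tight. I expect the crux to be a careful reuse argument — showing, via Propositions~\ref{prop:order}/\ref{prop:order2}, that the canonical limbs encountered along $T_D$ share enough structure that each bag is "actively recomputed" only $\cO(\log_2 n)$ times, and that a single local-complementation adjustment suffices to pass from one limb to a neighboring one in time proportional to the bag size. Establishing this amortization — essentially a potential/charging argument keyed to the $\le\log_2 n$ bound on linear rank-width — is where the technical weight of the proof will lie; the correctness of the recursion itself is already delivered by Theorem~\ref{thm:main} and the propositions of Sections~\ref{sec:splitdecs} and~\ref{sec:lemmaonCL}.
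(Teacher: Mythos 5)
Your skeleton matches the paper's: root $T_D$, process bottom-up, use Propositions~\ref{prop:order} and~\ref{prop:order2} to make iterated limbs coherent, and decide $\lrw(G)$ via Theorem~\ref{thm:main}. But there is a genuine gap at the step you describe as ``at each bag we only need to combine the already-computed values of the neighbor subtrees using Theorem~\ref{thm:main} applied to the limb.'' Theorem~\ref{thm:main} only tells you that the linear rank-width of the limb $\limbhat_D[B,y]$ lies in $\{\alpha,\alpha+1\}$, where $\alpha$ is the maximum of the children's $f$-values. Deciding between $\alpha$ and $\alpha+1$ is not a combination of children's values: in the remaining case (a unique $\alpha$-critical node $v_c$, possibly deep inside the subtree), by Proposition~\ref{prop:least} you must compute $f_2$ at $v_c$, i.e.\ the linear rank-width of a \emph{new} canonical limb obtained by cutting the current limb at $v_c$ --- an object that is not the subtree of any single child. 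This forces, for each node $v$, a chain of nested canonical limbs $D^v_{\maxnum}\supseteq D^v_{\maxnum-1}\supseteq\cdots\supseteq D^v_0$ (one cut per level $i$ at which a unique $i$-critical node exists), and the algorithm must store the whole vector $\beta^v_0,\dots,\beta^v_{\maxnum}$ of their linear rank-widths, not just a single $f$-value per directed edge of $T_D$. You never mention critical nodes or this chain, so your first phase as written does not determine the $f$-values.

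The amortization you defer as ``where the technical weight will lie'' is resolved in the paper not by a potential/charging argument but by Proposition~\ref{prop:main}: for every non-root node $w$ of $T^v_i$, the $f$-value of $w$'s subtree \emph{inside the modified decomposition} $D^v_i$ equals the stored $\beta^w_i$ computed when $w$ itself was processed. This is exactly what Propositions~\ref{prop:order} and~\ref{prop:order2} buy, and it is the statement you would need to prove to make ``each bag is actively recomputed only $\cO(\log_2 n)$ times'' precise; without it, the limbs $D^v_i$ for $i<\maxnum$ have modified bags and there is no a priori reason their internal $f$-values coincide with anything previously computed. (Two smaller points: extracting one limb costs $\cO(n)$, not $\cO(|V(B)|)$, since a pivot modifies every bag on an alternating path; and the layout reconstruction also needs the critical-node case, where the path of Lemma~\ref{lem:sdpath2} passes through $v_c$ and the piece on the far side is $\origin{D^v_{k-1}}$, whose layout comes from the same chain.) So the proposal is a correct high-level plan whose central mechanism --- the critical-node recursion and the reuse guarantee of Proposition~\ref{prop:main} --- is missing rather than merely unstated.
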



The main idea consists of rooting the canonical decomposition $D$ of a connected
distance-hereditary graph and associating each bag $B$ of $D$ with a canonical limb $\limbtil_D[B',y]$ where $B'$ is the parent of $B$ and $y$ is an unmarked vertex in some descendant bag of $B$, and computing the linear rank-width of $\limbhat_D[B',y]$. 
Following Theorem~\ref{thm:main}, in order to compute the linear rank-width of $\limbhat_D[B',y]$, 
we need to check the linear rank-width of proper limbs obtained from $\limbtil_D[B',y]$ by removing some bags of $\limbtil_D[B', y]$.
Basically, we need to take canonical limbs recursively from this reason.
In contrast to the case of forests for computing path-width, the associated canonical
limbs here are not necessarily sub-decompositions of the original decomposition, 
and thus, it is not at all trivial how to store values to use in the next steps. 	
The crucial point of achieving our running time is to overcome this problem using the results in Section~\ref{sec:lemmaonCL}.

    \vskip 0.2cm
\noindent{\textbf{Rooted decomposition trees.} We define the notion of \emph{rooted decomposition trees}.  A decomposition tree is \emph{rooted} if we distinguish either a node or an edge and call it the \emph{root} of the tree. Let $T$ be a rooted
decomposition tree with the root $r$. A node $v$ is a descendant of a node $v'$ if $v'$ is in the unique path from the root to $v$, and  when $r$ is a marked edge, this path contains both end vertices of $r$. If $v$ is a descendant of $v'$ and $v$ and $v'$ are adjacent, then we call $v$ a
\emph{child} of $v'$ and $v'$ the \emph{parent} of $v$. Observe from the definition of descendants that if $r=vv'$, then $v$ is the parent of $v'$ and also $v'$ is the parent of $v$.  We allow this tricky part for a technical reason. A node in $T$ is called a \emph{non-root node} if it is not the root node. 

Two nodes $v$ and
$v'$ are called \emph{comparable} if one node is a descendant of the other one. Otherwise, they are called \emph{incomparable}. Recall that for each node $v$ of $T$ and each canonical decomposition
$D$ with $T$ as its decomposition tree we write $\bag{D}{v}$ to denote the bag of $D$ with which it is in correspondence. For convenience, let $\pbag{D}{v}:=\bag{D}{v'}$ with $v'$ the parent of $v$.

Let $D$ be the canonical decomposition of a connected distance-hereditary graph $G$ and let $T$ be its decomposition tree rooted at $r$. 
Let $B:=\bag{D}{v}$ for some non-root node $v$ of $T$, and let $y$ be an unmarked vertex of $D$ that is represented by a vertex of $B$.
We define the root of the decomposition tree $\widetilde{T}$ of $\limbtil_D[B,y]$ as follows.
We assume that $\widetilde{T}$ is obtained from $T$ by removing $v$, and possibly adding an edge or identifying two nodes following the definition of canonical limbs.
If two comparable nodes $w$ and $w'$ with $w$ the parent of $w'$ are identified, then let $w$ be the identified node.
Otherwise, we give a new label for the identified node.

%

\begin{enumerate}
\item If $r$ exists in $\widetilde{T}$, then we assign $r$ as the root of $\widetilde{T}$. In the other cases, 
we can observe that either 
\begin{itemize}
\item $r$ is the root node and $\bag{D}{r}$ is
  removed when taking the canonical limb or 
  \item $r$ is the root edge, and a bag $\bag{D}{r'}$ is removed where $r'$ is a node incident with the root edge, when taking the canonical limb.
  \end{itemize}
\item If the removed node has one neighbor in $T\setminus r$, then we assign this neighbor as the root of $\widetilde{T}$.
\item If the removed node has two neighbors in $T\setminus r$ and they are linked by a new edge in $\widetilde{T}$, then we assign the new edge as the root of $\widetilde{T}$.
\item If the removed node has two neighbors in $T\setminus r$ and they are identified in $\widetilde{T}$, then we assign the new node as the root of $\widetilde{T}$.
\end{enumerate}

The following observation is easy to check from the definition of rooted decomposition trees of canonical limbs.

\begin{fact}\label{fact:non-root} 
If $w$ is a non-root node of the rooted decomposition tree $\widetilde{T}$ of a canonical
  limb $\limbtil_{D}[B,y]$, then $w$ is also a non-root node of $T$ with the property that $V(\bag{D}{w})=V(\bag{\limbtil_{D}[B,y]}{w})$.
\end{fact}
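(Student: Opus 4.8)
The plan is to unwind the definitions of the rooted decomposition tree $\widetilde T$ of $\limbtil_D[B,y]$ and verify the claim case by case, according to the four rules given above for choosing the root of $\widetilde T$. The key observation is that in every case, a \emph{non-root} node $w$ of $\widetilde T$ is never the new node or edge created by the limb construction: it is always a node that already existed in $T$. So first I would argue that a non-root node $w$ of $\widetilde T$ coincides with a node of $T$; then I would argue that this node is non-root in $T$ as well, and finally that the associated bag is literally unchanged by the limb operation.

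For the first part: the limb construction removes the node $v$ (with $\bag{D}{v}=B$) and then, following the three cases of size-$2$ bags in Section~\ref{sec:splitdecs}, either does nothing more, or removes an additional size-$2$ bag $B'$, possibly adding a new edge or identifying its two neighbors. In all of these, any node of $\widetilde T$ that is \emph{not} the newly created node or edge is a node of $T$; and the newly created node/edge, whenever it is created, is by rules (3)--(4) precisely designated the root. (In case the root $r$ of $T$ survives, rule (1) keeps it; in cases (2)--(4) the new root is adjacent to or equal to the removed node(s).) Hence if $w$ is a non-root node of $\widetilde T$, then $w$ is a node of $T$. For the second part: the only candidates for the root of $\widetilde T$ that are old nodes of $T$ are either $r$ itself (rule (1)) or the unique surviving neighbor of the removed node (rule (2)); so an old node $w$ which is non-root in $\widetilde T$ is not the original root $r$ and is therefore a non-root node of $T$.

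For the last part, $V(\bag{D}{w})=V(\bag{\limbtil_D[B,y]}{w})$: by Remark~\ref{rem:localdecom}, when we pass from $D$ to a locally equivalent canonical decomposition (which is all the limb operations $T*v$, $T\wedge vy$ amount to on the bags) the node-to-bag correspondence is preserved up to local complementation within bags, which does not change vertex sets of bags. Deleting the marked vertex $v$ and the possible subsequent cleanup (removing a size-$2$ bag and reattaching or recomposing) only affects $B$, the size-$2$ bag $B'$, and the bags directly adjacent to them along the marked edges involved; a non-root node $w$ is, by the analysis above, none of $v$, $B'$, or the node that absorbs the new marked edge, and its bag retains exactly the same unmarked and marked vertices. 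This gives $V(\bag{D}{w})=V(\bag{\limbtil_D[B,y]}{w})$.

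The main obstacle, such as it is, is bookkeeping: I must be careful about the special ``tricky'' convention that the root may be an edge $r=vv'$, in which case both endpoints count as parents of each other, so that ``non-root node'' has to be interpreted correctly when $r$ is incident to the removed node; and I must make sure that in the size-$2$-bag-cleanup cases where a node $v_1$ of a neighbor bag $B_1$ is ``replaced by'' the unmarked vertex $r$ of $B'$, the surviving node $v_1$ is still identified with a bag having the same vertex set (it gains $r$ but loses $v_1$, and in the relevant case $B_1$ was the removed-and-merged structure, so this is exactly one of the identification cases covered by rule (4) and hence the new root). Once these boundary conventions are pinned down, the statement follows immediately from the definitions, which is why it is phrased as a \textbf{Fact} rather than a lemma.
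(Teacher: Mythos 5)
The paper gives no proof of this Fact at all (it is asserted as ``easy to check from the definition''), so your case analysis is the kind of unwinding the authors had in mind, and your three-step decomposition (a non-root node of $\widetilde{T}$ is an old node of $T$; it is non-root in $T$; its bag is untouched) is the right skeleton. However, there is a concrete gap in the step where you claim that \emph{every} node whose bag is modified by the limb construction ends up being (part of) the root of $\widetilde{T}$. That claim is only guaranteed when the retained component of $D\setminus V(B)$ does \emph{not} contain the original root $r$, so that rules (2)--(4) reassign the root to the surgery site. When $y$ lies on the root side of $B$ --- which is exactly the situation for the limbs $D^v_{j-1}=\limbtil_{D^v_j}[\bag{D^v_j}{v_c},y]$ taken in the algorithm, where $y$ is represented by $\zeta_2(D^v_j,v_c)$ --- rule (1) keeps the far-away root $r$, while the bag $\bag{D}{p}$ of the node $p$ adjacent to $B$ in the retained component necessarily loses the marked vertex $\zeta_t(D,B,T)$. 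If $\abs{V(\bag{D}{p})}\ge 4$ this node survives as a \emph{non-root} node of $\widetilde{T}$ with $V(\bag{D}{p})\neq V(\bag{\limbtil_D[B,y]}{p})$; if $\abs{V(\bag{D}{p})}=3$ the cleanup can even merge two non-root bags into a node that keeps an old label but has a strictly larger vertex set. In either case your assertion that ``its bag retains exactly the same unmarked and marked vertices'' fails for this boundary node, and rules (3)--(4) do not rescue you because they are only invoked when the removed bag is the root bag or incident with the root edge.

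So your proof does not establish the statement in the generality in which it is phrased --- and indeed the statement, read literally, is false for the boundary node $p$ in the ``upward'' limb case. To repair the argument you must either (i) restrict the bag-equality claim to non-root nodes other than the one adjacent to $B$ in the retained component (and other than any node touched by the size-$2$ cleanup), which is all the paper actually uses: the bag equality is only ever invoked for the limbs $D^v_{\maxnum}$, where the retained component is the subtree below $\pbag{D}{v}$, the original root is discarded, and the surgery site really is the new root; or (ii) explicitly verify, for each application of the Fact, that the surgery site coincides with the (possibly reassigned) root. As written, your argument silently assumes (ii) in all cases, and that assumption is exactly where it breaks.
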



\vskip 0.2cm
\noindent{\textbf{$k$-critical nodes.} 
For every non-root node $v$ of $T$ with the parent node $v'$, we define that 
\begin{itemize}
\item $\cmp_1[D, v]$ is the component of $D\setminus V(\bag{D}{v'})$ containing $\bag{D}{v}$,
\item $\cmp_2[D, v]$ is the component of $D\setminus V(\bag{D}{v})$ containing $\bag{D}{v'}$,
\item $f_1(D,v):=f_D(\pbag{D}{v},\cmp_1[D,v])$, 
\item $f_2(D,v):=f_D(\bag{D}{v},\cmp_2[D,v])$,
\item $\zeta_1(D, v):=\zeta_b(D, \bag{D}{v'}, \cmp_1[D, v])$, and
\item $\zeta_2(D, v):=\zeta_b(D, \bag{D}{v}, \cmp_2[D, v])$.
\end{itemize}
A node $v$ of $T$ is called \emph{$k$-critical} if $f_1(D,v)=k$ and $v$ has two children $v_1$ and $v_2$ such that $f_1(D,v_1)=f_1(D,v_2)=k$.


\medskip

From now on, we define some sequences of canonical limbs, which will be taken sequentially in our algorithm.
We recall that $\lrw(G)\le \log_2\abs{V(G)}$ by Theorem~\ref{thm:rankwidth1} and Lemma~\ref{lem:lrwtrivialbound}.
For convenience, let 
\[\eta:=\lfloor \log_2\abs{V(G)}\rfloor.\]
For each non-root node $v$ of $T$, we define recursively the following. 
We first choose an unmarked vertex $y$ of $D$ represented by $\zeta_1(D, v)$, and
\begin{itemize}
\item let $D^{v}_{\maxnum}$ be any canonical limb $\limbtil_{D}[\pbag{D}{v},y]$, and let $T_{\maxnum}^v$
  be the rooted decomposition tree of $D_{\maxnum}^v$.
  \end{itemize}
 For each $1\le j\le \eta$, let  $\alpha_j^v:=\max \{f_1(D_j^v,w) \mid \textrm{$w$ is a non-root node of $T_j^v$}\}$, and
 we define $D_{j-1}^v$ and $T^v_{j-1}$ as follows:
 \begin{enumerate}
 \item If $\alpha^{v}_j\neq j$, then let $D_{j-1}^v:=D_j^v$ and $T^{v}_{j-1}:=T^{v}_{j}$.  
 \item If $\alpha^{v}_j= j$ and one of the following is satisfied, then let $D_{j-1}^v:=D_j^v$ and $T^{v}_{j-1}:=T^{v}_{j}$.
 \begin{itemize}
 \item $T^v_j$ has a node with at least $3$ children $w$ such that $f_1(D_j^v,w)=j$.
 \item $T^v_j$ has two incomparable nodes $v_1$ and $v_2$ where $v_1$ is a $j$-critical node $v_1$ and $f_1(D_j^v,v_2)=j$.
 \item $T^v_j$ has no $j$-critical nodes.
 \end{itemize}
 \item Otherwise, $T^v_j$ has the unique $j$-critical node $v_c$. 
 In this case, we choose an unmarked vertex $y$ of $D_j^v$
    represented by $\zeta_2(D_j^v, v_c)$ and
  let $D_{j-1}^v:=\limbtil_{D_j^v}[\bag{D_j^v}{v_c},y]$ and let $T^{v}_{j-1}$ be the rooted decomposition tree of $D_{j-1}^v$.
\end{enumerate}
Lastly for each $0\le j\le \eta$, let $\beta_j^v:=\lrw(\origin{D_j^v})$.

The existence of the unique $j$-critical node in (3) is verified in the next proposition.

\begin{proposition}\label{prop:least} Let $0\leq j \leq \maxnum$ and let $v$ be a non-root node of $T$ such that $\alpha_j^v\leq j$ and $T_j^v$ contains 
  neither
\begin{itemize}
\item a node having at least $3$ children $w$ with $f_1(D_j^v,w)=\alpha_j^v$, nor
\item two incomparable nodes $v_1$ and $v_2$ having the property that $v_1$ is an $\alpha_j^v$-critical node and $f_1(D_j^v,v_2)=\alpha_j^v$.
\end{itemize}
Let $w$ be an $\alpha_j^v$-critical node of $T_j^v$.  Then $w$ is the unique $\alpha_j^v$-critical vertex of $T_j^v$.  Moreover, $\lrw(\origin{D_j^v})= \alpha_j^v+1$ if and only if $\lrw(\origin{D_{j-1}^v})=f_2(D_j^v,w)= \alpha_j^v$.
\end{proposition}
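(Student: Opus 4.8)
The plan is to combine the structural characterization of Theorem~\ref{thm:main} with the compositional properties of canonical limbs from Section~\ref{sec:lemmaonCL}. First I would establish uniqueness of the $\alpha_j^v$-critical node. Suppose $T_j^v$ has two distinct $\alpha_j^v$-critical nodes $w_1$ and $w_2$. By definition of a critical node, each $w_i$ satisfies $f_1(D_j^v,w_i)=\alpha_j^v$, so in particular $f_1(D_j^v,w_2)=\alpha_j^v$ while $w_1$ is $\alpha_j^v$-critical. If $w_1$ and $w_2$ are incomparable this immediately contradicts the second excluded hypothesis. If they are comparable, say $w_1$ a descendant of $w_2$ (the other case is symmetric), then I would use Proposition~\ref{prop:containing2}: the component $T$ of $D_j^v\setminus V(\bag{D_j^v}{w_2})$ witnessing $f_1(D_j^v,w_2)=\alpha_j^v$ together with the containment relations forces $w_2$ to have two children $w$ with $f_1(D_j^v,w)=\alpha_j^v$ (the one on the path toward $w_1$, which carries value $\geq\alpha_j^v$ by monotonicity through the critical node $w_1$, plus the one toward the component $T$), while being $\alpha_j^v$-critical means it has two such children already; chasing which three components of $D_j^v\setminus V(\bag{D_j^v}{w_2})$ realize value $\alpha_j^v$ — one toward $w_1$'s critical side, the other two being the two children guaranteed by $w_1$ (or by $w_2$ itself) — contradicts the first excluded hypothesis (a node with at least three children $w$ with $f_1=\alpha_j^v$). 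I would need to be a little careful here about whether the relevant "three components'' are really distinct and really attached at $w_2$; this bookkeeping with $\cmp_1$, $\cmp_2$ and Proposition~\ref{prop:containing2} is where the argument is most delicate.

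For the "moreover'' part, let $w$ be the unique $\alpha_j^v$-critical node and write $k':=\alpha_j^v$. Recall $D_{j-1}^v=\limbtil_{D_j^v}[\bag{D_j^v}{w},y]$ where $y$ is represented by $\zeta_2(D_j^v,w)$, i.e. $\origin{D_{j-1}^v}=\limbhat_{D_j^v}[\bag{D_j^v}{w},\cdot]$ is (locally equivalent to) the limb on the side $\cmp_2[D_j^v,w]$; so $\lrw(\origin{D_{j-1}^v})=f_2(D_j^v,w)$ by definition of $f_2$ and Lemma~\ref{lem:freechoice} — that equality is essentially by construction, so the real content is the equivalence $\lrw(\origin{D_j^v})=k'+1 \iff f_2(D_j^v,w)=k'$.

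For the forward implication, suppose $\lrw(\origin{D_j^v})=k'+1$. Since $\alpha_j^v=k'$, every non-root node $u$ has $f_1(D_j^v,u)\le k'$, hence every bag of $D_j^v$ has all its components $T$ satisfying $f_{D_j^v}(\cdot,T)\le k'$; but $\lrw>k'$, so by Theorem~\ref{thm:main} applied with parameter $k'$ the hypothesis of the characterization must fail, meaning some bag $B^\star$ has at least three components with $f=k'$. I would argue $B^\star=\bag{D_j^v}{w}$: the three-components condition at $B^\star$ forces, via Proposition~\ref{prop:containing2} and the excluded hypotheses, that $B^\star$ is exactly the critical node, and moreover that the third component — the one other than the two children realizing $k'$ — is $\cmp_2[D_j^v,w]$, giving $f_2(D_j^v,w)=k'$. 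Conversely, if $f_2(D_j^v,w)=k'$, then $\bag{D_j^v}{w}$ has three components of value $k'$ (the two children from criticality plus $\cmp_2$), so by Theorem~\ref{thm:main} with parameter $k'$ we get $\lrw(\origin{D_j^v})\ge k'+1$; and since $\alpha_j^v=k'$ bounds all the $f_1$ values, applying Theorem~\ref{thm:main} with parameter $k'+1$ (now the at-most-two condition is satisfied vacuously at every bag because no component reaches $k'+1$) gives $\lrw(\origin{D_j^v})\le k'+1$, hence equality.

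I expect the main obstacle to be the identification "$B^\star = \bag{D_j^v}{w}$ and the third component is $\cmp_2[D_j^v,w]$'': translating the failure of the Theorem~\ref{thm:main} condition at an a priori arbitrary bag into a statement about the designated critical node requires carefully propagating the value $k'$ along the decomposition tree using Proposition~\ref{prop:containing2}, and ruling out the two excluded configurations at every bag on the relevant path — this is the same kind of monotonicity-and-containment bookkeeping as in Lemmas~\ref{lem:sdpath} and \ref{lem:sdpath2}, and it is the part I would write out most carefully rather than the manipulations with $\limbtil$ and $\limbhat$.
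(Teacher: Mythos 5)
Your overall strategy --- uniqueness via the two excluded configurations, then both directions of the ``moreover'' statement via Theorem~\ref{thm:main} applied at the bag of the critical node --- is the same as the paper's, and your treatment of the ``moreover'' part (including the observation that $\lrw(\origin{D_{j-1}^v})=f_2(D_j^v,w)$ holds by construction, the three-components argument for the backward implication, and the $\le k'+1$ upper bound) matches the paper's proof at the same level of detail.

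The one place where your plan as written would not go through is the comparable case of the uniqueness argument. Suppose $w_1$ is a descendant of $w_2$, both $\alpha_j^v$-critical. You propose to exhibit three children of $w_2$ with $f_1$-value $\alpha_j^v$ and contradict the first excluded hypothesis. By criticality $w_2$ already has two such children $c_1,c_2$, and by Proposition~\ref{prop:containing2} the child $c_3$ of $w_2$ whose subtree contains $w_1$ also satisfies $f_1(D_j^v,c_3)=\alpha_j^v$; but nothing prevents $c_3\in\{c_1,c_2\}$, in which case you get only two children and no contradiction with the first hypothesis. In that sub-case you must fall back on the \emph{second} hypothesis: the other designated child, say $c_2\neq c_3$, is incomparable with $w_1$ (it is not a descendant of $w_1$ since $w_1$ lies strictly below $c_3$, and $w_1$ is not a descendant of $c_2$), and the pair ($w_1$ an $\alpha_j^v$-critical node, $c_2$ with $f_1(D_j^v,c_2)=\alpha_j^v$) violates the second excluded configuration. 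This is in fact the paper's entire argument for the comparable case --- it never touches the first hypothesis here: the ancestor critical node $w_2$ has a child $w''$ with $f_1(D_j^v,w'')=\alpha_j^v$ not lying above $w_1$, and $(w_1,w'')$ is the forbidden incomparable pair. A small additional inaccuracy: the component you describe as ``witnessing $f_1(D_j^v,w_2)=\alpha_j^v$'' is a component of $D_j^v\setminus V(\pbag{D_j^v}{w_2})$, not of $D_j^v\setminus V(\bag{D_j^v}{w_2})$, so it is not one of the components hanging off the bag of $w_2$ and cannot serve as a third child-side component in your count.
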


\begin{proof}
Let $k:=\alpha_j^v$.
  We first show that $w$ is the unique $k$-critical node of $T_j^v$.  Let $w'$ be a $k$-critical node of $T_j^v$ that is distinct from $w$.  From the second assumption, $w$ and $w'$ must be comparable
  in $T_j^v$.  Without loss of generality, we may assume that $w$ is a descendant of $w'$ in $T_j^v$.  Then by the definition of $k$-criticality, $w'$ has a child $w''$ such that
  $f_1(D_j^v,w'')=k$ and $w$ is not a descendant of $w''$ in $T_j^v$, contradicting to the second assumption.

  Now we claim that $\lrw(\origin{D_j^v})= k+1$ if and only if $f_2(D_j^v,w)= k$.  By the assumption on $k$ and by Theorem \ref{thm:main}, $\lrw(\origin{D_j^v})\le k+1$.  
  Let $w_1$ and $w_2$ be the two children of $w$ such that $f_1(D_j^v,w_1)=f_1(D_j^v,w_2)=k$. By assumption, every other child $w'$
  of $w$ satisfies that $f_1(D_j^v,w')\leq k-1$. 

 If $f_2(D_j^v,w)=k$, then clearly we have $\lrw(\origin{D_j^v})\ge k+1$ by Theorem~\ref{thm:main}.
  For the forward direction, suppose that $\lrw(\origin{D_j^v})\ge k+1$.  Since $T_j^v$ contains no node having at least three children $w$ such that $f_1(D_j^v,w)=k$, by Theorem \ref{thm:main},
  there should exist a $k$-critical node $v_c$ of $T_j^v$ such that $f_2(D_j^v,v_c)=k$.  Since $w$ is the unique $k$-critical node of $T_j^v$, $w=v_c$ and
  $f_2(D_{j}^v,w)= \lrw(\origin{D_{j-1}^v})=k$, as required.
\end{proof}

Let $v$ be a non-root node of $T$.  From Theorem~\ref{thm:main}, we can easily observe that
$\alpha_{\maxnum}^v\le \lrw(\origin{D_{\maxnum}^v)}) \le \alpha_{\maxnum}^v+1$.  By Proposition~\ref{prop:least}, 
if $T_{\maxnum}^v$ has no unique critical node, then it is easy to determine $\beta_{\maxnum}^v$, 
and otherwise
the computation of $\beta_{\maxnum}^v$ can be reduced to
the computation of $f_2(D_{\maxnum}^v,v_c)$ where $v_c$ is the unique $\alpha_{\maxnum}^v$-critical node of $T_{\maxnum}^v$.  
In order to compute it, we can recursively call the algorithm on $\origin{D_{\alpha_{\maxnum}^v-1}^{v}}$.
However, we will prove that these recursive calls are not needed if we store the values $\beta_j^v$.

\begin{lemma}\label{lem:pdpres}
  Let $v$ be a non-root node of $T$.  Let $i$ be an integer such that $0\le i< \maxnum$.  If $\alpha^{v}_i\le i$, then $\alpha^{v}_{i+1}\le i+1$.
\end{lemma}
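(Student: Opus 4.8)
The plan is to read the conclusion off directly from the recursive definition of the sequence $D^v_\maxnum, D^v_{\maxnum-1},\dots,D^v_0$ together with the associated trees $T^v_j$ and values $\alpha^v_j$. Since $0\le i<\maxnum$, we have $1\le i+1\le\maxnum$, so the step with index $j:=i+1$ of that recursion is defined: it produces $(D^v_i,T^v_i)$ from $(D^v_{i+1},T^v_{i+1})$ according to exactly one of the three clauses (1), (2), (3). I would argue by a case analysis on which clause applies when $j=i+1$.

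The key observation is that clauses (1) and (2) both set $D^v_i:=D^v_{i+1}$ and $T^v_i:=T^v_{i+1}$ verbatim, so in those cases $\alpha^v_i$ and $\alpha^v_{i+1}$ are computed from the \emph{same} decomposition and tree and are therefore equal. Hence: if clause (1) applies, its hypothesis $\alpha^v_{i+1}\neq i+1$ together with $\alpha^v_{i+1}=\alpha^v_i\le i$ (the hypothesis of the lemma) yields $\alpha^v_{i+1}\le i<i+1$; if clause (2) applies, then $\alpha^v_{i+1}=\alpha^v_i\le i$, which contradicts the standing requirement $\alpha^v_j=j$ (\ie $\alpha^v_{i+1}=i+1$) of clause (2), so this clause cannot be the one that applies under the hypothesis $\alpha^v_i\le i$; and if clause (3) applies, then, since (3) is the ``otherwise'' branch, reached only when $\alpha^v_j=j$, we have $\alpha^v_{i+1}=i+1$ outright. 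In each clause that can apply, $\alpha^v_{i+1}\le i+1$, which is the claim.

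I do not expect a genuine obstacle; the statement unwinds to a short case distinction. The only point that needs a moment's care is to notice that the canonical-limb operation performed inside clause (3) is irrelevant to bounding $\alpha^v_{i+1}$ from above: that operation only shapes $D^v_i$, whereas the value of $\alpha^v_{i+1}$ is already pinned to $i+1$ by the clause's defining condition, so no structural property of $\limbtil_{D^v_{i+1}}[\cdot,\cdot]$ enters. (For the boundary instance $i=0$ one reads $\alpha^v_0$ as $\max\{f_1(D^v_0,w)\mid w\text{ a non-root node of }T^v_0\}$, the natural extension of the displayed formula to $j=0$; the argument above is then unchanged.)
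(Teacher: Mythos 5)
Your proof is correct and amounts to the same argument as the paper's: the paper simply takes the contrapositive, observing that if $\alpha^v_{i+1}\ge i+2$ then clause (1) of the definition forces $D^v_i=D^v_{i+1}$ and hence $\alpha^v_i=\alpha^v_{i+1}\ge i+2$, contradicting $\alpha^v_i\le i$. Your direct case split over the three clauses is just a slightly more verbose unwinding of the same observation, so there is nothing substantive to add.
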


\begin{proof}
  Suppose that $\alpha^{v}_{i+1}\ge i+2$.  By the definition of $D^{v}_{i}$, $D^{v}_{i}=D^{v}_{i+1}$ and therefore, $\alpha^{v}_i\ge i+2$, which yields a contradiction.
\end{proof}

\vskip 0.2cm
\noindent{\textbf{Our algorithm.} 
 Now we are ready to present and analyze our algorithm. We describe the algorithm explicitly in Algorithm~\ref{algo:computelrw}.  First, we modify the given decomposition as follows.  For the
 canonical decomposition $D'$ of a distance-hereditary graph $G$, we modify $D'$ into a canonical decomposition $D$ of a connected distance-hereditary graph by adding a root bag $R$ and making it adjacent
 to a bag $R'$ of $D'$ so that $f_1(D, v)=\lrw(G)$, where $v$ is the node corresponding to the bag $R'$.  
 We
 call $(D,R)$ a \emph{modified canonical decomposition of $G$}.  
 Let $T$ be the decomposition tree of the new canonical decomposition $D$.
The basic strategy is to compute $\beta_i^v=\lrw(\origin{D_i^v})$ for all non-root nodes $v$ of $T$ and all integers $i$ such that
 $\alpha^{v}_i\le i$.
 We recall that $\eta=\lfloor \log_2\abs{V(G)}\rfloor$.


We first present the subroutine {\bf Limb} which computes a canonical limb and its decomposition tree recursively.


\begin{small}
\begin{algorithm}[!h]
\DontPrintSemicolon 
\KwIn{A canonical decomposition $D$ of a connected distance-hereditary graph, its rooted decomposition tree $T$ with the root $r$, $\{\gamma(v)\in \bN \mid v\in V(T\setminus r)\}$, a non-root node $w$ of $T$, and $z\in \{1,2\}$. }
\KwOut{A canonical decomposition $D'$ of $D$ associated with $\cT_z[D,w]$, its rooted decomposition tree $T'$ with the root $r'$, $\{\gamma(v)\mid v\in V(T'\setminus r')\}$, and $\alpha$.}
Let $w'$ be the parent of $w$;\;
	\lIf{$z=1$}{choose an unmarked vertex $y$ of $D$ represented by $\zeta_1(D, w)$ and $v\gets w'$;} 
    	\lElse{choose an unmarked vertex $y$ of $D$ represented by $\zeta_2(D, w)$ and $v\gets w$;}
$D'\gets \limbtil_D[\bag{D}{v}, y]$ and obtain $T'$ from $T$ and assign the root $r'$ of $T'$;\;
$\alpha\gets \max\{\gamma(v)\mid$ $v\in V(T'\setminus r')\}$;\;
\Return{$(D', T', \{\gamma(v)\mid v\in V(T'\setminus r')\}, \alpha)$};\;
\caption{{\bf Limb($D,T,\{\gamma(v)\mid v\in V(T\setminus r)\},w\in V(T\setminus r), z\in \{1,2\}$)}. }
\label{algo:computelimb}
\end{algorithm}
\end{small}

We describe the main algorithm in Algorithm~\ref{algo:computelrw}.

\begin{small}
\begin{algorithm}[!h]
\DontPrintSemicolon 
\KwIn{A connected distance-hereditary graph $G$.}
\KwOut{The linear rank-width of $G$.}
Compute a modified canonical decomposition $(D,R)$ of $G$, and the decomposition tree $T$ of $D$ with the root node $r$;\;
Let $\beta_i^v\gets 0$ for each non-root node and each $0\leq i \leq \maxnum$;\;
For each non-root leaf node $v$ in $T$ and each $0\le i\le \maxnum$, let $\beta^{v}_i\gets 1$ ;\;\label{line:leafbag}
$\Gamma\gets \{\beta_i^v\mid v\in V(T\setminus r) \}$;\;
\While{$T$ has a non-root node $v$ where $\beta^{v}_{\maxnum}$ is not computed} 
	{
    Let $v$ be a non-root node in $T$ where $\beta^{v}_{\maxnum}=0$, but $\beta_{\maxnum}^{v'}\ne 0$ for each child $v'$ of $v$;\;\label{line:set1}
    $(D^v_{\maxnum}, T_{\maxnum}^v, \Gamma^v_{\maxnum}, \alpha^v_{\maxnum})\gets $ {\bf Limb($D,T,\Gamma,v,1$)};\;
    Let $S$ be a stack;\label{line:set2}
    \quad $i\gets \alpha^{v}_{\maxnum}$; \quad $k\gets \alpha^{v}_{\maxnum}$;\;
    \While{(true)}
    	{\label{line:computelimbs}
    	\If{($T^v_i$ has a node having at least $3$ children $v'$ with $\beta^{v'}_i=i$) or ($T^v_i$ has two incomparable nodes $v_1$ and $v_2$ having the property that $v_1$ is an $i$-critical node and $\beta^{v_2}_i=i$) or ($T^v_i$ has no $i$-critical nodes)}{
    	\label{line:stoploop}
    	Stop this loop
    	}
    	Find the unique $i$-critical node $v_c$ of $T^v_i$;\;
        $(D^v_{i-1}, T_{i-1}^v, \Gamma^v_{i-1}, \alpha^v_{i-1})\gets $ {\bf Limb($D^v_i,T^v_i,\Gamma^v_i,v_c,2$)};\; 
        $\push(S,i)$ and $i\gets \alpha^{v}_{i-1}$;\;    	  \label{line:set3}
        }
    \lIf{
    	\label{line:leastbag}
    ($T^v_i$ has a node having at least $3$ children $v'$ with $\beta^{v'}_i=i$) or ($T^v_i$ has two incomparable nodes $v_1$ and $v_2$ with the property that $v_1$ is an $i$-critical node and $\beta^{v_2}_i=i$)} {  
    	$\beta^{v}_i\gets i+1$;} 
    \lElse
    	{
    	$\beta^{v}_i\gets i$;
    	}
	\While{$(S \ne \emptyset)$}
		{
		\label{line:computelrw1}
		$j\gets \pull(S)$;\;
    	\lIf{$\beta^{v}_i = j$}{$\beta^{v}_j \gets j+1$;} 
    	\lElse{$\beta^{v}_j \gets j$;}
    	\For{$\ell\gets i+1$ \textbf{to} $j-1$} {
   		$\beta^{v}_{\ell}\gets\beta^{v}_i$;
   	 	 	}
        $i\gets j$;\;
		}
	\For{$j\gets k+1$ \textbf{to} $\maxnum$}{
	\label{line:computelrw2}
	$\beta^{v}_j\gets \beta^{v}_k$;\label{line:loopend}
	}
	}
Let $r'$ be the unique neighbor of the root and \Return{$\beta^{r'}_{\maxnum}$};\;
\caption{{\sc Compute Linear Rank-Width of Connected Distance-Hereditary Graphs}}
\label{algo:computelrw}
\end{algorithm}
\end{small}


\vskip 0.2cm
\noindent{\textbf{Correctness of the algorithm.} 
The following proposition has a key role in the algorithm. It mainly uses the results in Section~\ref{sec:lemmaonCL}.
\begin{proposition}\label{prop:main}   
  Let $v$ be a non-root node of $T$ and let $0\le i\le \maxnum$ such that $\alpha^{v}_i\le i$.  If $w$ is a non-root node of $T^v_i$, then, $\beta^w_i=f_1[D^v_i,w]$.
\end{proposition}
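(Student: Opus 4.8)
Since $\beta^w_i=\lrw(\origin{D^w_i})$ by definition, and since $f_1(D^v_i,w)=f_{D^v_i}(\pbag{D^v_i}{w},\cmp_1[D^v_i,w])=\lrw(\limbhat_{D^v_i}[\pbag{D^v_i}{w},y])=\lrw(\origin{\limbtil_{D^v_i}[\pbag{D^v_i}{w},y]})$ for any unmarked vertex $y$ of $D^v_i$ represented by $\zeta_1(D^v_i,w)$, Lemma~\ref{lem:vm-rw} reduces the statement to showing that $D^w_i$ and $\limbtil_{D^v_i}[\pbag{D^v_i}{w},y]$ are locally equivalent as canonical decompositions. The plan is to prove this by downward induction on $j\in\{i,i+1,\dots,\maxnum\}$, carrying the following invariant: $w$ is a non-root node of $T^v_j$; $D^w_j$ is locally equivalent to $\limbtil_{D^v_j}[\pbag{D^v_j}{w},\cdot]$; and, as a consequence, for every non-root node $u$ of $T^w_j$ the children of $u$ in $T^w_j$ and in $T^v_j$ coincide and $f_1(D^w_j,u)=f_1(D^v_j,u)$. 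The consequence follows from the local-equivalence part via Proposition~\ref{prop:preservelrw}, Proposition~\ref{prop:order2} applied inside $D^v_j$, Remark~\ref{rem:localdecom} (locally equivalent canonical decompositions have isomorphic decomposition trees) and Fact~\ref{fact:non-root} (non-root bags keep their vertex sets); it yields in particular $\alpha^w_j\le\alpha^v_j$, and $\alpha^v_j\le j$ by iterating Lemma~\ref{lem:pdpres} from the hypothesis $\alpha^v_i\le i$.

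For the base case $j=\maxnum$ we have $D^w_{\maxnum}=\limbtil_D[\pbag{D}{w},\cdot]$ and $D^v_{\maxnum}=\limbtil_D[\pbag{D}{v},\cdot]$. Since $w$ is a non-root node of $T^v_{\maxnum}$, whose root sits at the position of $v$, the node $w$ is a strict descendant of $v$ in $T$; hence $\pbag{D}{v}$ is a strict ancestor of $\pbag{D}{w}$, so the component of $D\setminus V(\pbag{D}{w})$ containing $\bag{D}{w}$ avoids $\bag{D}{v}$, the component of $D\setminus V(\pbag{D}{v})$ containing $\bag{D}{v}$ contains $\bag{D}{w}$, and $V(\pbag{D}{w})$ induces a bag of $D^v_{\maxnum}$. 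Thus Proposition~\ref{prop:order2} applies with $B_1=\pbag{D}{w}$ and $B_2=\pbag{D}{v}$ and gives the required local equivalence. (The degenerate case in which $w$ is a child of $v$ and $\bag{D}{v}$ shrinks to a bag of size $2$ when $D^v_{\maxnum}$ is formed, so that $\pbag{D}{w}$ does not literally survive as a bag, must be handled separately from the definition of the limb.)

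For the inductive step from $j+1$ to $j$, each of the two sequences either leaves its decomposition unchanged or takes one further limb at its unique critical node. The crucial point is that, since $\alpha^v_{j+1}\le j+1$, the $v$-sequence can stay only through case~(1) of the construction ($\alpha^v_{j+1}\le j$), never through case~(2); and if it reduces, it reduces at a unique $(j+1)$-critical node $v_c$ of $T^v_{j+1}$ which, because $w$ must survive in $T^v_j$, is either a strict descendant of $w$ or incomparable with $w$ in $T$. Using the invariant at level $j+1$ one then argues: if the $v$-sequence stays, no descendant of $w$ is $(j+1)$-critical and $\alpha^w_{j+1}\le j$, so the $w$-sequence also stays; if it reduces at $v_c$ incomparable with $w$, then, choosing via Lemma~\ref{lem:freechoice} an unmarked vertex for the limb that lies outside the subtree rooted at $w$, the subtree rooted at $w$ is left intact, so $\limbtil_{D^v_j}[\pbag{D^v_j}{w},\cdot]$ is locally equivalent to $\limbtil_{D^v_{j+1}}[\pbag{D^v_{j+1}}{w},\cdot]$ by Proposition~\ref{prop:order2}, while again no descendant of $w$ is $(j+1)$-critical so the $w$-sequence stays; and if it reduces at $v_c$ a strict descendant of $w$, then by the invariant $v_c$ is also the unique $(j+1)$-critical node of $T^w_{j+1}$, the $w$-sequence reduces at $v_c$ as well, and the desired local equivalence is obtained by combining the induction hypothesis, Proposition~\ref{prop:preservelrw} (to replace $D^w_{j+1}$ by the locally equivalent limb $\limbtil_{D^v_{j+1}}[\pbag{D^v_{j+1}}{w},\cdot]$), and Proposition~\ref{prop:order} (to commute the limb at $\bag{D^v_{j+1}}{v_c}$ with the limb at $\pbag{D^v_{j+1}}{w}$, whose hypotheses hold because $\pbag{D^v_{j+1}}{w}$ lies above $v_c$ while $v_c$ lies strictly below $w$).

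The main obstacle is this inductive step: one has to show that the two limb sequences reduce in step with each other on the subtree rooted at $w$, and the key ingredient for excluding a desynchronisation is the propagated inequality $\alpha^v_{j+1}\le j+1$, which prevents the $v$-sequence from pausing for the reasons allowed by case~(2) of its construction. Everything else is bookkeeping: tracking that non-root bags keep their vertex sets (Fact~\ref{fact:non-root}), disposing of the degenerate size-$2$-bag cases, and applying the two commutation lemmas (Propositions~\ref{prop:order} and~\ref{prop:order2}) in the correct order so that local equivalence of the two canonical decompositions is maintained as the level decreases.
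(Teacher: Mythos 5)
Your proposal is correct and follows essentially the same route as the paper's proof: the same reduction of $\beta^w_i=f_1[D^v_i,w]$ to a local equivalence of canonical limbs, the same downward induction from $j=\maxnum$ with base case via Proposition~\ref{prop:order2}, and the same three-way case split in the inductive step (sequence stays; reduces at a critical node incomparable with $w$, handled by Proposition~\ref{prop:order2}; reduces at a critical node below $w$, handled by Propositions~\ref{prop:preservelrw} and~\ref{prop:order}), with Lemma~\ref{lem:pdpres} supplying the propagated bound $\alpha^v_j\le j$ that forces synchronisation. The only differences are presentational, e.g.\ your explicit flagging of the degenerate size-$2$/size-$3$ bag situations, which the paper also disposes of where it matters.
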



		

\begin{proof}
Let $w$ be a non-root node of $T^v_i$.
By Fact ~\ref{fact:non-root}, for each $i+1\le j\le \maxnum$, $w\in V(T^v_j)$ and hence $w\in V(T)$. Moreover, since $\alpha^{v}_i\le i$, by
  Lemma~\ref{lem:pdpres}, $\alpha^{v}_j\le j$ for all $i+1\le j\le \maxnum$.  
  For each $i\le j\le \maxnum$, we define that
  \begin{itemize}
  \item $y_j$ is an unmarked vertex of $D^v_j$ represented by the marked vertex $\zeta_1(D_j^v, w)$.
  \end{itemize}
  Now, we claim that for each $i\le j\le \maxnum$,  
  \begin{itemize}
  \item $\limbtil_{D_j^v}[\pbag{D_j^v}{w},y_j]$ is locally equivalent to $D_j^w$.  
  \end{itemize}
 If it is true, then we obtain that $\limbtil_{D_i^v}[\pbag{D_i^v}{w},y_i]$ is locally equivalent to $D_i^w$, which implies that $\beta^w_i=f_1[D^v_i, w]$. 
 We prove it by
  induction on $\maxnum -j$.  

  If $j=\maxnum$, then both $D_{\maxnum}^v$ and $D_{\maxnum}^w$ are canonical limbs of $D$.
  Since $w$ is a non-root node of $T^v_{\maxnum}$, $V(\bag{D}{w})$ induces a bag in $D_{\maxnum}^v$, 
  and hence by Proposition \ref{prop:order2}, 
  $D_{\maxnum}^w$ is locally equivalent to $\limbtil_{D_{\maxnum}^v}[\pbag{D_{\maxnum}^v}{w},y_{\maxnum}]$.


  Now let us assume that $i\le j< \maxnum$.  By
  induction hypothesis $D_{j+1}^w$ is locally equivalent to $\limbtil_{D_{j+1}^v}[\pbag{D_{j+1}^v}{w},y_{j+1}]$. Assume first that $\alpha_{j+1}^v\leq j$. Then, by Proposition \ref{prop:order2}, we have that $\alpha_{j+1}^w\leq j$. In that case, by the definition, we have $D_j^v=D_{j+1}^v$ and $D_j^w=D_{j+1}^w$, and we conclude the statement.
  
  Assume now that $\alpha^{v}_{j+1}=j+1$.  Since $\alpha^{v}_{j+1}=j+1$ and $\alpha^{v}_j\le j$, $T^v_{j+1}$ should have a unique $(j+1)$-critical node $v_c$ such that
  $D_j^v=\limbtil_{D_{j+1}^v}[\bag{D_{j+1}^v}{v_c},y_c]$ for some unmarked vertex $y_c$ of $D_{j+1}^v$ represented by $\zeta_2(D_{j+1}^v, v_c)$. 
   We distinguish two cases: either $v_c$ is incomparable with $w$ in $T^v_{j+1}$,
  or $v_c$ is a descendant of $w$ in $T^v_{j+1}$. Since $w$ is a node of $T_j^v$, $w$ cannot be a descendant of $v_c$.

  \vskip 0.2cm
  \noindent\emph{\textbf{Case 1.}  $v_c$ is incomparable with $w$ in $T^v_{j+1}$.}
  
  
  Since $v_c$ is incomparable with $w$ in $T^v_{j+1}$ and $v_c$ is the unique $(j+1)$-critical node in $T^v_{j+1}$, there is no $(j+1)$-critical node in $T_{j+1}^w$. Hence, $D_j^w=D_{j+1}^w$ by definition.  Also, by Proposition \ref{prop:order2},
  \begin{itemize}
  \item $\limbtil_{D_j^v}[\pbag{D_{j}^v}{w},y_j]$ is locally equivalent to $\limbtil_{D_{j+1}^v}[\pbag{D_{j+1}^v}{w},y_{j+1}]$. 
  \end{itemize}
  Hence, we can conclude that $D_j^w$ is locally equivalent to
  $\limbtil_{D_j^v}[\pbag{D_{j}^v}{w},y_j]$ because $D_{j+1}^w$ is locally equivalent to $\limbtil_{D_{j+1}^v}[\pbag{D_{j+1}^v}{w},y_{j+1}]$.

  \vskip 0.2cm
  \noindent\emph{\textbf{Case 2.}   $v_c$ is a descendant of $w$ in $T^v_{j+1}$.}
  
  If $v_c$ is a child of $w$ in $T^v_{j+1}$ and the bag $\bag{D_{j+1}^v}{w}$ has size $3$, then $T_j^v$ cannot contain $w$ as a node, and this contradicts the assumption that $w$ is a node of
  $T_j ^v$.  Therefore, we may assume that either
   \begin{enumerate}
   \item $\abs{\bag{D_{j+1}}{w}}\ge 4$, or
   \item $\abs{\bag{D_{j+1}}{w}}=3$ and $v_c$ is not a child of $w$ in $T_{j+1}^v$.
   \end{enumerate}
   This implies that $v_c$ is a node of the decomposition tree of $\limbtil_{D_{j+1}^v}[ \pbag{D_{j+1}^v}{w},y_{j+1}]$. Let $D':=\limbtil_{D_{j+1}^v}[ \pbag{D_{j+1}^v}{w},y_{j+1}]$.
    By induction hypothesis,
   we know that $D_{j+1}^w$ is locally equivalent to $D'$.
   Note that, by definition $v_c$ is also the unique critical node of $T_{j+1}^w$, and 
   \begin{itemize}
   \item $D_j^w=\limbtil_{D_{j+1}^w}[\bag{D_{j+1}^w}{v_c}, z]$
   for some unmarked vertex $z$ of $D_{j+1}^w$ represented by $\zeta_2(D_{j+1}^w, v_c)$.
   \end{itemize} 
  Also, by Proposition~\ref{prop:order}, 
  \begin{itemize}
  \item $\limbtil_{D_j^v}[\pbag{D_{j}^v}{w},y_j]$ is locally
   equivalent to $\limbtil_{D'}[\bag{D'}{v_c},z']$ where $z'$ is an unmarked vertex of $D'$ represented by $\zeta_2(D', v_c)$.  
   \end{itemize}
   Since $D'$ is locally equivalent to $D^w_{j+1}$, 
   $\limbtil_{D_j^v}[\pbag{D_{j}^v}{w},y_j]$ is locally equivalent to $D_j^w$, and this concludes the proof.
 \end{proof}


\begin{proof}[Proof of Theorem \ref{thm:mainalg}]  We first show that
  Algorithm~\ref{algo:computelrw} correctly computes the linear rank-width of $G$.  If $\abs{V(G)}\le 1$, then $\lrw(G)=0$ from the definition. We may assume that $\abs{V(G)}\ge 2$. Let $(D,R)$ be a modified canonical decomposition of $G$ and let $T$ be the canonical decomposition tree of $D$ and let $r'$ be the unique neighbor of the root of
  $T$.  As we observed, we have that $\lrw(G)=\lrw(\origin{D^{r'}_{\maxnum}})=\beta_{\maxnum}^{r'}$, and want to prove that Algorithm~\ref{algo:computelrw} correctly outputs $\beta_{\maxnum}^{r'}$.
  We claim that for each non-root node $v$ of $T$ and $0\leq i \leq \maxnum$ such that $\alpha^v_i\le i$, Algorithm~\ref{algo:computelrw} correctly computes $\beta_i^v$.  

  Suppose $v$ is a non-root leaf node of $T$.  Since every canonical limb is connected by Lemma~\ref{lem:connected} and $\abs{V(G)}\ge 2$, $D_{\maxnum}^v$ is isomorphic to either a complete graph or a star with at
  least two vertices.  Thus, $\lrw(\origin{D_{\maxnum}^v})=1$, and by construction for each $0\leq i \leq \maxnum$, $D_i^v=D_{\maxnum}^v$, and Line~\ref{line:leafbag} correctly puts these values.
	 	 
  We assume that $v$ is a non-root node in $T$ that is not a leaf, and for all its descendants $v'$ and integers $0\leq \ell \leq \maxnum$ with $\alpha^{v'}_{\ell}\le \ell$, $\beta^{v'}_{\ell}$ is
  computed (\ie $\beta_{\ell}^{v'}\ne 0$).  We claim that Line~\ref{line:computelimbs}-\ref{line:set3} recursively computes $D_i^{v}$ for each $i$ where $\alpha^v_i\le i$.  We first remark that for
  computing $\alpha^v_i$ of $T^v_i$, we use the fact that for each non-root node $w$ of $T^v_i$,  $\beta^w_i=f_1[D^v_i,w]$ from Proposition~\ref{prop:main}.  So,
  $\alpha_i^v=\max\{\beta_i^w\mid$ $w$ a non-root node $w$ of $T^v_i\}$.

  Let $i\in \{0,1, \ldots, \maxnum\}$ such that $\alpha^{v}_{i}\le i$. 
   If $\alpha^{v}_{i}< i$, then by the definition, $T^v_{i-1}=T^v_{i}$ and thus, we take $D_{i-1}^v=D_i^v$. We may assume that
  $\alpha^v_i=i$.  If either $T^v_i$ has a node with at least $3$ children $v'$ such that $\beta^{v'}_i=i$, or $T^v_i$ has two incomparable nodes $v_1$ and $v_2$ with $v_1$ an $i$-critical node and
  $\beta^{v_2}_i=i$, then from the definition of $D^v_i$, we have that $D^v_{i-1}=D^v_{i}$ and for all $0\le \ell\le i-1$, $\alpha^v_{\ell}=i>\ell$.  Since we do not need to evaluate $\beta^v_{\ell}$
  when $\alpha^v_{\ell}>\ell$, we stop the loop.  If $T^v_i$ has no $i$-critical node, then $\beta^v_{i}=\alpha^v_{i}=i$, that is, the $\beta_i^v$ value cannot be increased by one. In this case, we
  also stop the loop.  These $3$ cases are the conditions in Line~\ref{line:stoploop}.


  Suppose neither of the conditions in Line~\ref{line:stoploop} occur.  Then by Proposition~\ref{prop:least}, $T^v_i$ has a unique $i$-critical bag $v_c$ and $D^v_{i-1}$ is equal to a canonical
  limb $\limbtil_{D_i^v}[\bag{D_{i}^v}{v_c},y]$ where $y$ is some unmarked vertex of $D_i^v$ represented by $\zeta_2(D_i^v, v_c)$.  So, we compute
  $D_{i-1}^v$ from $D_i^v$, the rooted decomposition tree $T_{i-1}^v$ of $D_{i-1}^v$ and compute subsequently $\alpha^v_{i-1}$.  Notice that for all $\alpha^v_{i-1}\le \ell \le i-1$,
  $D^v_{\ell}=D^v_{i-1}$ and thus it is sufficient in the next iteration to deal with $D^v_{\alpha^v_{i-1}}$ directly.  Thus, Line~\ref{line:computelimbs}-\ref{line:set3} correctly computes
  canonical decompositions $D_i^v$ for each $i$ where $\alpha^v_i=i$.
  
  Now we verify the procedure of computing $\beta^v_j$ in Line~\ref{line:leastbag}.  Let $0\leq \ell \leq \maxnum$ be the minimum integer such that $\alpha^v_{\ell}=\ell$.  If $\ell=0$, then
  $\beta^v_{\ell}=1$.  Suppose $\ell\ge 1$. Then since $\alpha^v_{\ell-1}>\ell-1$, by Theorem~\ref{thm:main}, we have that
  \begin{enumerate}
  \item $\beta^v_{\ell}=\ell+1$ if either $T^v_{\ell}$ has a node having at least $3$ children $v'$ with $\beta^{v'}_{\ell}=\ell$, or two incomparable nodes $v_1$ and $v_2$ with the property that 
    $v_1$ is an $i$-critical node and $\beta^{v_2}_i=i$,
  \item $\beta^v_{\ell}=\ell$ if otherwise.
  \end{enumerate}
  Thus, Line~\ref{line:leastbag} correctly computes it.

  In the loop in Line~\ref{line:computelimbs}, we use a stack to pile up the integers $i$ such that $T^v_i$ has the unique $i$-critical node.  When $T^v_i$ has the unique $i$-critical node, by
  Proposition ~\ref{prop:least},
  \begin{enumerate}
  \item $\beta^v_i=i+1$ if $\beta^v_{i-1}=i$, and
  \item $\beta^v_i=i$ if $\beta^v_{i-1}\le i-1$.
  \end{enumerate}
  So, from the lower value in the stack we can compute $\beta^v_i$ recursively.  From Line~\ref{line:computelrw1} to Line~\ref{line:computelrw2}, Algorithm~\ref{algo:computelrw} computes all
  $\beta^v_i$ correctly where $\alpha^v_i\le i$, and in particular, it computes $\beta^v_{\maxnum}$.  Therefore, at the end of the algorithm, it computes $\beta^{r'}_{\maxnum}$ that is equal to the linear rank-width
  of $G$.
	
	\medskip
	  \medskip
  \noindent{\textbf{The running time of the algorithm.}}
  Let us now analyze its running time. Let $n$ and $m$ be the number of vertices and edges of $G$. Its canonical decomposition can be computed in time $\cO(n+m)$ by Theorem \ref{thm:CED}, and one can compute
  a modified canonical decomposition $(D,R)$ in constant time.
  Note that  the number of bags in $D$ is bounded by $\cO(n)$ (see \cite[Lemma 2.2]{GavoilleP03}).
  
  We first remark that Algorithm~\ref{algo:computelimb} runs in time $\mathcal{O}(n)$. This is because when we take a limb from a canonical decomposition, 
  we need to take a local complementation or a pivoting on a sub-decomposition, and in the worst case, we may visit each bag to apply these operations.
  The decomposition tree and $\alpha$, $\beta$ values can be obtained in linear time.
  
  Now we observe the running time of Algorithm~\ref{algo:computelrw}. 
  The number of iterations of the whole loop from Line~\ref{line:set1} to Line~\ref{line:loopend}  is at most $\mathcal{O}(n)$
  because it runs in as many as the number of bags in $D$.
   Lines~\ref{line:set1}-\ref{line:set2} can be implemented in time $\cO(n)$. 
  The loop in Line~\ref{line:computelimbs} runs $\log_2(n)$ times because $\lrw(G)\leq \log_2(n)$, and all the steps in Line~\ref{line:computelimbs} can be implemented in time $\cO(n)$.
  Also, Lines~\ref{line:leastbag}-\ref{line:computelrw2} can be done in time $\cO(n)$.  We conclude that this
  algorithm runs in time $\cO(n^2\cdot \log_2 n)$.
  
  \medskip
  \medskip
  \noindent{\textbf{Finding an optimal linear layout.}}
  We finally establish how to find a linear layout witnessing $\lrw(G)$. We may assume that $G$ has at least $3$ vertices.
We can assume that for each non-root node $v$ of $T$ and $0\leq i \leq \maxnum$ with $\alpha^v_i\le i$, $T^v_i$ and $\beta^v_i$ are computed.
We inductively obtain optimal linear layouts of $\origin{D^v_i}$ using those values.
If $v$ is a non-root leaf node of $T^v_i$, then $\origin{D^v_i}$ is either a complete graph or a star for all $i$, and thus, any ordering of $V(\origin{D^v_i})$ is a linear layout of width $1$. 
We may assume that $v$ is a not a leaf node.

We will search for the path depicted in Lemma~\ref{lem:sdpath2} to apply the same technique used in the proof of Theorem~\ref{thm:main}.
What we have shown in Theorem~\ref{thm:main} is that for a canonical decomposition $D$ of a distance-hereditary graph with its decomposition tree $T_D$, 
if $T_D$ has a path $P:=v_0v_1 \cdots v_nv_{n+1}$ such that 
 \begin{itemize}
 \item for each node $v$ in $P$ and a component $T$ of $D\setminus V(\bag{D}{v})$ not containing a bag $\bag{D}{w}$ with
  $w\in P$, $f(\bag{D}{v},T)\le k-1$,
  \end{itemize}
then we can generate a linear layout of $\origin{D}$ having width at most $k$. 
But it assumed that we have a linear layout of graphs corresponding to pending subtrees.
So, for our purpose, it is necessary to find such a path with $k=\beta^v_i$ such that
 \begin{itemize}
 \item for each node $v$ in $P$ and a component $T$ of $D\setminus V(\bag{D}{v})$ not containing a bag $\bag{D}{w}$ with
  $w\in P$, a linear layout of $\limbhat_D[\bag{D}{v},T]$ with an optimal width is already computed.
  \end{itemize}
 
Let us assume that $k=\beta^v_i$. There are two cases; either $T^v_i$ has the $k$-critical node or not.

	  \vskip 0.2cm
\noindent\emph{\textbf{Case 1.} $T^v_i$ has no $k$-critical node.}

In this case, we take a path $P$ from the root node of $T^v_i$ (or both end nodes of the root edge) to a node $w$ where $\beta^w_i=k$ but for every descendant $w'$ of $w$, $\beta^{w'}_i<k$.
Since $T^v_i$ has no $k$-critical node, every node outside of this path has $\beta$ value less than $k$.
Thus, the graphs corresponding to subtrees pending to this path have linear rank-width at most $k-1$, and moreover, by induction hypothesis, 
we already obtained an optimal linear layout for each graph. This path can be computed in linear time.
  
  	  \vskip 0.2cm
\noindent\emph{\textbf{Case 2.} $T^v_i$ has a $k$-critical node.}

	First note that $T^v_i$ cannot have two $k$-critical nodes, otherwise, $\beta^v_i=k+1$, which contradicts to our assumption.
   Let $x$ be the unique $k$-critical node of $T^v_i$, and let $x_1, x_2$ be two children of $x$ where $\beta^{x_j}_i=k$ for each $j\in \{1,2\}$.
   For each $j\in \{1,2\}$, we choose a descendant $w_j$ of $x_j$ where $\beta^{w_j}_i=k$ but for every descendant $w_j'$ of $w_j$, $\beta^{w_j}_i<k$. 
   Let $P$ be the path from $w_1$ to $w_2$ in $T^v_i$. 
   This path can be computed in linear time.
   
   Since $x$ is the unique $k$-critical node of $T^v_i$, 
   every node below of this path has $\beta$ value less than $k$, and the graphs corresponding to subtrees pending to the path are computed in advance.
   Moreover, since this case is exactly when $\alpha^v_i=k$ and $\beta^v_i=k$ and $T^v_i$ has a unique critical node, 
   the canonical decomposition corresponding to the subtree of $T^v_i\setminus x$ containing the parent of $x$ is exactly $D^v_{k-1}$, 
   and $\origin{D^v_{k-1}}$ should have linear rank-width at most $k-1$ as $\beta^v_i=k$.
   By the induction hypothesis, the optimal linear layout of $\origin{D^v_{k-1}}$ is also computed before, as required.
   
   \medskip
   We conclude that we can compute an optimal layout of $G$ in time $\mathcal{O}(n^2\cdot \log_2 n)$.
   \end{proof}




\section{Path-width of matroids with branch-width $2$}\label{sec:matroid}

As a corollary of Theorem~\ref{thm:mainalg}, we can compute the path-width of matroids of branch-width at most $2$. We first recall the necessary materials about matroids.  We refer to the book written by Oxley~\cite{Oxley12} for our matroid notations.

	  \vskip 0.2cm
\noindent{\textbf{Matroids.}}
A pair $(E(M),\cI(M))$ is called a \emph{matroid} $M$ if $E(M)$, called the \emph{ground set of $M$}, is a finite set and $\cI(M)$, called the set of \emph{independent sets of $M$}, is a nonempty collection of
subsets of $E(M)$ satisfying the following conditions:
\begin{enumerate}
\item[(I1)] if $I\in \cI(M)$ and $J\subseteq I$, then $J\in \cI(M)$,
\item[(I2)] if $I,J\in \cI(M)$ and $|I|<|J|$, then $I\cup \{z\}\in \cI(M)$ for
  some $z\in J\backslash I$.
\end{enumerate}
A maximal independent set in $M$ is called a \emph{base of $M$}. It is known that, if $B_1$ and $B_2$ are bases of $M$, then $\abs{B_1}=\abs{B_2}$. 
  
  For a matroid $M$ and a subset $X$ of $E(M)$, we let $(X,\{I\subseteq X: I \in \cI(M)\})$ be the matroid denoted by $\restrict{M}{X}$.  The size of a base of $\restrict{M}{X}$ is called the
\emph{rank} of $X$ in $M$ and the \emph{rank function} of $M$ is the function $r_{M}:2^{E(M)}\to \bN$ that maps every $X\subseteq E(M)$ to its rank. The rank of $E(M)$ is called the rank of $M$. 

If $M$ is a matroid, then we define $\lambda_{M}$, called the \emph{connectivity function of $M$}, such that for
every subset $X$ of $E(M)$, 
\[\lambda_{M}(X) = r_{M}(X) + r_{M}(E(M)\setminus X) -r_{M}(E(M)) +1.\] It is known that the function $\lambda_{M}$ is symmetric and submodular.

Let $A$ be a binary matrix and let $E$ be the column labels of $A$. Let $\cI$ be the collection of all those subsets $I$ of $E$ such that the columns of $A$ with index in $I$ are linearly
independent. Then $M(A):=(E,\cI)$ is a matroid. Every matroid isomorphic to $M(A)$ for some matrix $A$ is called a \emph{binary matroid} and $A$ is called a \emph{representation of $M$ over the
  binary field}.

We now define fundamental graphs of binary matroids.  Let $G$ be a bipartite graph with a bipartition $(A, B)$.  We define $M(G, A, B)$ as the binary matroid represented by the $(A\times V)$-matrix
$(I_A \quad A_G[A, B])$ where $I_A$ is the $(A\times A)$ identity matrix; and we call $G$ a \emph{fundamental graph} of $M(G,A,B)$.  We remark that $\abs{E(M)}=\abs{V(G)}$.


	  \vskip 0.2cm
\noindent{\textbf{Branch-width and path-width of matroids.}}
 A \emph{branch-decomposition} of a matroid $M$ is a pair $(T,L)$, where $T$ is a subcubic tree and $L$ is a bijection from the elements of $E(M)$ to the leaves of $T$. For an edge $e$ in $T$, $T\setminus e$ induces a partition $(X_{e} ,Y_{e} )$ of the leaves of $T$. The \emph{width} of an edge $e$ is defined as $\lambda_{M} (L^{-1}(X_{e} ))$. The \emph{width} of a branch-decomposition $(T,L)$ is the maximum width over all edges of $T$. The \emph{branch-width} of $M$, denoted by $\bwd{M}$, is the minimum width over all branch-decompositions of $M$. If $\abs{E(M)}\leq 1$, then $M$ admits no branch-decomposition and $\bwd{M}=0$.

A sequence $e_1,\ldots,e_n$ of the ground set $E(M)$ is called a \emph{linear layout} of $M$.  The \emph{width} of a
linear layout $e_1,\ldots,e_n$ of $M$ is
\[\max_{1\le i\le n-1}\{\lambda_{M}(\{e_1,\ldots,e_i\})\}.\]
The \emph{path-width} of $M$, denoted by $\lbwd{M}$, is defined as the minimum width over all linear layouts of $M$.

The following relation is established by Oum~\cite{Oum05}.

\begin{proposition}[Oum~\cite{Oum05}]\label{prop:matroidrank}
Let $G$ be a bipartite graph with a bipartition
$(A, B)$ and let $M: = M(G, A,B)$. 
For every $X\subseteq V(G)$, $\cutrk_G(X)=\lambda_{M}(X)-1$.
Thus, $\rw(G)=\bwd{M}-1$ and $\lrw(G)=\lbwd{M} -1$.
\end{proposition}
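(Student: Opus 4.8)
The plan is to first establish the pointwise identity $\cutrk_G(X)=\lambda_M(X)-1$ for every $X\subseteq V(G)$, and then derive the two equalities $\rw(G)=\bwd{M}-1$ and $\lrw(G)=\lbwd{M}-1$ essentially for free from it. This is painless because a rank-decomposition of $G$ is literally the same object as a branch-decomposition of $M$ — both are a pair $(T,L)$ with $T$ subcubic and $L$ a bijection between $V(G)=E(M)$ and the leaves of $T$ — and likewise a linear layout of $G$ is the same object as a linear layout of $M$; so the only thing that changes between the two parameters is a uniform shift by $1$ at every cut.

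For the pointwise identity I would set up coordinates. Write $A$ for the identity part, so that $M=M(G,A,B)$ is represented over $\mathbb{F}_2$ by the $(A\times V(G))$-matrix $N:=(\,I_A\ \ A_G[A,B]\,)$, and split a given $X\subseteq V(G)$ as $X=X_A\dotcup X_B$ with $X_A:=X\cap A$ and $X_B:=X\cap B$. Since $I_A$ has full row rank, $r_M(V(G))=|A|$. Restricting $N$ to the columns indexed by $X$ gives the block matrix $\left(\begin{smallmatrix} I_{X_A} & A_G[X_A,X_B]\\ 0 & A_G[A\setminus X_A,X_B]\end{smallmatrix}\right)$ with row blocks $X_A$ and $A\setminus X_A$; using the identity block to clear the top-right block by column operations shows $r_M(X)=|X_A|+\rank A_G[A\setminus X_A,X_B]$, and symmetrically $r_M(V(G)\setminus X)=|A\setminus X_A|+\rank A_G[X_A,B\setminus X_B]$. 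Substituting into $\lambda_M(X)=r_M(X)+r_M(V(G)\setminus X)-r_M(V(G))+1$, the terms $|X_A|+|A\setminus X_A|-|A|$ cancel and I obtain
\[\lambda_M(X)=\rank A_G[A\setminus X_A,X_B]+\rank A_G[X_A,B\setminus X_B]+1.\]
On the other side, because $G$ is bipartite the matrix $A_G[X,V(G)\setminus X]$ has zero blocks $A_G[X_A,A\setminus X_A]$ and $A_G[X_B,B\setminus X_B]$, hence is block anti-diagonal with blocks $A_G[X_A,B\setminus X_B]$ and $A_G[X_B,A\setminus X_A]$; its rank is the sum of the ranks of these two blocks, and by symmetry of $A_G$ we have $\rank A_G[X_B,A\setminus X_A]=\rank A_G[A\setminus X_A,X_B]$. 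Therefore $\cutrk_G(X)=\rank A_G[X_A,B\setminus X_B]+\rank A_G[A\setminus X_A,X_B]=\lambda_M(X)-1$.

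For the derived equalities: given a rank-decomposition/branch-decomposition $(T,L)$ and an edge $e$ of $T$ with induced leaf-partition $(X_e,Y_e)$, the width of $e$ as a branch-decomposition edge is $\lambda_M(L^{-1}(X_e))=\cutrk_G(L^{-1}(X_e))+1$, which is exactly one more than its width as a rank-decomposition edge; taking the maximum over edges and then the minimum over all $(T,L)$ gives $\bwd{M}=\rw(G)+1$. The same argument, with the prefixes $\{x_1,\dots,x_i\}$ of a linear layout in place of $L^{-1}(X_e)$, gives $\lbwd{M}=\lrw(G)+1$. (One works here under $\abs{V(G)}\ge 2$, equivalently $\abs{E(M)}\ge 2$; the degenerate case is where both sides are $0$ and has to be noted separately.) I do not expect a genuine obstacle: the only care needed is the bookkeeping in the block-matrix rank computation and keeping track of the bipartition, since this is precisely the rank-function reformulation of the matroid connectivity function.
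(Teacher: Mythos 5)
Your proof is correct; the paper itself states this proposition as a quoted result of Oum and gives no proof, and your argument is the standard one: the block computation of $r_M$ from the representation $(I_A\ \ A_G[A,B])$ yields $\lambda_M(X)=\rank A_G[A\setminus X_A,X_B]+\rank A_G[X_A,B\setminus X_B]+1$, which matches $\cutrk_G(X)+1$ by the block anti-diagonal structure of $A_G[X,V(G)\setminus X]$, and the two width identities then follow since rank-/branch-decompositions and linear layouts are the same combinatorial objects with all cut values shifted by one.
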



Here, we observe that 
every matroid of branch-width at most $2$ is binary. 
This can be observed from the known minor characterizations for binary matroids and matroids of branch-width at most $2$.
For the definition of matroid minors, we refer to \cite{Oxley12}.

\begin{theorem}[Tutte~\cite{Tutte1958, Tutte1965}]\label{thm:excludedbinary}
A matroid is binary if and only if it has no minor isomorphic to $U_{2,4}$.
\end{theorem}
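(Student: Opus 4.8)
The plan is to handle the two implications separately; the necessity of excluding $U_{2,4}$ is routine, and the converse is the real content. For necessity, I would first check that $U_{2,4}$ is itself not binary: a loopless rank-$2$ binary matroid is a set of nonzero vectors in $\bF_2^2$, of which there are only three, so such a matroid has at most three pairwise non-parallel elements, whereas every two of the four elements of $U_{2,4}$ form a base; hence $U_{2,4}$ has no representation over $\bF_2$. Then I would note that the class of binary matroids is minor-closed: deletion simply drops a column of a representing matrix, and for a contraction $M/e$ one row-reduces the representing matrix over $\bF_2$ until the column indexed by $e$ is a standard basis vector and then deletes that row and column, again obtaining a $0/1$ matrix over $\bF_2$. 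Since $U_{2,4}$ is not binary and binarity is minor-closed, no binary matroid has a $U_{2,4}$-minor.

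For the converse I would argue by a minimal counterexample. Suppose some non-binary matroid has no $U_{2,4}$-minor, and let $M$ be such a matroid with $\abs{E(M)}$ minimum, so that every proper minor of $M$ is binary. First I would normalize $M$: it must be connected, because a disconnected $M$ is a direct sum of two proper minors, both of which are binary by minimality, and a direct sum of binary matroids is binary; and it must be simple and cosimple, because a loop, a coloop, a parallel pair, or a series pair could be deleted or contracted to yield a smaller non-binary matroid, since re-inserting such an element preserves binarity. In particular, for every element $e$ both $M\setminus e$ and $M/e$ are binary.

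The hard part will be the final step: proving that a matroid all of whose single-element deletions and contractions are binary is necessarily $U_{2,4}$ — this is precisely where Tutte's theorem is difficult. The route I would take is the classical one (Tutte's chain-group argument, or equivalently the bridge-of-a-circuit analysis in \cite{Oxley12}, or the passage through the circuit/cocircuit parity characterization of binary matroids): fix an element $e$ and a binary representation of $M\setminus e$, examine the obstruction to extending that representation by a column for $e$ compatible with every circuit and cocircuit of $M$, and use that $M/e$ is also binary to pin this obstruction down; one then shows that the obstruction produces a $U_{2,4}$ restriction or minor unless $\abs{E(M)}$ is small, in which case a direct case analysis gives $M\cong U_{2,4}$, contradicting the choice of $M$. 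Since this is a classical result, in the present paper we use Theorem~\ref{thm:excludedbinary} as cited rather than reproducing its proof.
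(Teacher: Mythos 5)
Your proposal matches the paper's treatment of this statement: the paper gives no proof and simply cites Tutte, and you likewise defer the genuinely hard direction (that a connected, simple, cosimple matroid whose every single-element deletion and contraction is binary must be $U_{2,4}$) to the classical literature, which is appropriate for a result of this depth. The portions you do prove --- that $U_{2,4}$ is non-binary because $\bF_2^2$ has only three nonzero vectors, that binarity is minor-closed, and the normalization of a minimal counterexample to a connected, simple, cosimple matroid --- are all correct.
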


\begin{theorem}[Dharmatilake~\cite{Dharmatilake1996}]\label{thm:excludedbw2}
A matroid has branch-width at most $2$ if and only if it has no minor isomorphic to $U_{2,4}$ and $M(K_4)$.
\end{theorem}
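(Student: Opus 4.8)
The plan is to establish the two implications separately and, for the harder one, to reformulate the statement as the claim that $U_{2,4}$ and $M(K_4)$ are the only \emph{excluded minors} for the class of matroids of branch-width at most $2$. For the forward direction I would first record that branch-width is minor-monotone: for a minor $N=M/C\setminus D$ one has $\lambda_N(X)\le\lambda_M(X\cup C)$, so every branch-decomposition of $M$ induces one of $N$ of no larger width, whence $\bwd{N}\le\bwd{M}$. It then suffices to check $\bwd{U_{2,4}}=\bwd{M(K_4)}=3$. For $U_{2,4}$ the unique subcubic tree with four leaves has an internal edge inducing a $2/2$ split $X$ with
\[\lambda_{U_{2,4}}(X)=r_{U_{2,4}}(X)+r_{U_{2,4}}(E\setminus X)-r_{U_{2,4}}(E)+1=2+2-2+1=3,\]
while every leaf edge gives value $2$, so the branch-width is exactly $3$; for $M(K_4)$ (rank $3$, six elements) a short case analysis over the leaf-partitions of subcubic trees on six leaves again yields $3$. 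Since both matroids therefore have branch-width $3$, any matroid containing one of them as a minor has branch-width at least $3$.

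For the backward direction I would argue the contrapositive: if $\bwd{M}\ge 3$, then $M$ has a $U_{2,4}$ or $M(K_4)$ minor. Choosing $M$ minor-minimal with $\bwd{M}\ge 3$, I may assume $M$ is an excluded minor, i.e.\ $\bwd{M}=3$ while every proper minor has branch-width at most $2$. Two standard connectivity reductions then apply. If $M$ had a $1$-separation (a direct-sum decomposition) then, since the branch-width of a direct sum does not exceed the maximum of the branch-widths of its parts (glue the two decompositions across a separation of order one), one part would be a proper minor of branch-width $3$, contradicting minimality; hence $M$ is connected. Similarly, if $M$ had a $2$-separation it would be a $2$-sum of two proper minors, and a branch-decomposition of $M$ of width at most $2$ could be assembled from width-$\le 2$ decompositions of the two parts glued at the separation, again contradicting $\bwd{M}=3$; hence $M$ is $3$-connected.

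It remains to classify $3$-connected excluded minors. By Theorem~\ref{thm:excludedbinary}, either $M$ is non-binary and hence has a $U_{2,4}$ minor; since $U_{2,4}$ itself has branch-width $3$, minimality forces $M\cong U_{2,4}$. Otherwise $M$ is binary, and the key lemma I would prove is that \emph{every $3$-connected binary matroid has an $M(K_4)$ minor}. Granting it, $M$ (being $3$-connected and binary) has an $M(K_4)$ minor; as $\bwd{M(K_4)}=3$, minimality forces $M\cong M(K_4)$. I would prove the lemma by induction on $\abs{E(M)}$ using Tutte's wheels-and-whirls theorem: every $3$-connected matroid with at least four elements either has a $3$-connected single-element minor or is a wheel or a whirl. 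Whirls are non-binary, so a binary $3$-connected matroid is either a wheel---and every wheel $W_r$ with $r\ge 3$ has an $M(K_4)=W_3$ minor---or reduces to a smaller $3$-connected binary matroid, to which the induction hypothesis applies; the induction bottoms out at $M(K_4)$, the smallest $3$-connected binary matroid.

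The main obstacle is this binary classification, encapsulated in the lemma that every $3$-connected binary matroid has an $M(K_4)$ minor: making the wheels-and-whirls induction rigorous requires verifying that whirls are indeed non-binary, that each wheel $W_r$ with $r\ge 3$ carries an $M(K_4)$ minor, and that the small base cases contain no $3$-connected binary matroid other than $M(K_4)$. I expect the self-dualities $U_{2,4}^{*}\cong U_{2,4}$ and $M(K_4)^{*}\cong M(K_4)$, together with the invariance $\bwd{M}=\bwd{M^{*}}$, to streamline the argument: the class of excluded minors is closed under duality, so deletions and contractions can be treated symmetrically throughout the minimality reductions. The remaining work is routine bookkeeping---confirming $\bwd{U_{2,4}}=\bwd{M(K_4)}=3$ and checking that every single-element deletion and contraction of these two matroids has branch-width exactly $2$, which is precisely what certifies them as genuine excluded minors rather than merely as obstructions.
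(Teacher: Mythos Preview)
The paper does not prove this statement. Theorem~\ref{thm:excludedbw2} is quoted without proof as a known result of Dharmatilake, and is invoked only once, in the proof of Corollary~\ref{cor:matroidpathwidth}, together with Theorem~\ref{thm:excludedbinary}, to conclude that every matroid of branch-width at most $2$ is binary. There is therefore nothing in the paper to compare your proposal against.

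That said, your outline is a reasonable sketch of how such a proof can be organised. The forward direction is routine once minor-monotonicity of branch-width is in hand, and your reduction of the backward direction to $3$-connected excluded minors via direct sums and $2$-sums is the standard line. Two points deserve more care. First, the claim that gluing two width-$\le 2$ branch-decompositions across a $2$-sum yields a width-$\le 2$ decomposition of $M$ requires checking that the connectivity of each displayed separation in $M$ agrees with its connectivity in the relevant summand; this is true but not entirely trivial. Second, the assertion that ``the induction bottoms out at $M(K_4)$, the smallest $3$-connected binary matroid'' is sensitive to which connectivity conventions you adopt for matroids with very few elements, and the wheels-and-whirls theorem itself carries size hypotheses that must be reconciled with those conventions. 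A tidier route for the binary case is to quote directly that a binary matroid has no $M(K_4)$ minor if and only if it is a series-parallel matroid, and then verify that series-parallel matroids have branch-width at most $2$; this bypasses the inductive reduction entirely.
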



\begin{corollary}\label{cor:matroidpathwidth} 
	The path-width of every $n$-element matroid of branch-width at most $2$ can be computed in time $\mathcal{O}(n^2 \cdot \log_2 n)$,
provided that the matroid is given by an independent set oracle. Moreover, 
a linear layout of the matroid witnessing the path-width can be computed with the same time complexity.
\end{corollary}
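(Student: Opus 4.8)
The plan is to reduce the computation of $\lbwd{M}$ for a matroid $M$ of branch-width at most $2$ to the computation of the linear rank-width of a distance-hereditary graph and then to invoke Theorem~\ref{thm:mainalg}. The two points that need to be established are that such an $M$ is binary with a distance-hereditary fundamental graph, and that this fundamental graph can be built explicitly from the independent set oracle within the allowed time; everything else is a mechanical translation through Proposition~\ref{prop:matroidrank}.

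Let $M$ be a matroid given by an independent set oracle with $\bwd{M}\le 2$ and let $n:=\abs{E(M)}$. If $n\le 1$ then $\lbwd{M}=0$ and there is nothing to do, so assume $n\ge 2$. By Theorem~\ref{thm:excludedbw2}, $M$ has no minor isomorphic to $U_{2,4}$, hence by Theorem~\ref{thm:excludedbinary} the matroid $M$ is binary. I would first compute a base $A$ of $M$ greedily: process the elements of $E(M)$ in an arbitrary order and keep an element whenever adding it to the current set preserves independence; this uses $n$ oracle calls. Next, for every $e\in E(M)\setminus A$, I would compute the fundamental circuit of $e$ with respect to $A$, namely
\[C_M(e,A)=\{e\}\cup\{a\in A : (A\setminus\{a\})\cup\{e\}\in\cI(M)\},\]
by testing $(A\setminus\{a\})\cup\{e\}$ for each $a\in A$; this costs at most $\abs{A}$ oracle calls per element $e$, so $\cO(n^2)$ oracle calls in total. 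Now let $G$ be the bipartite graph with vertex set $A\dotcup(E(M)\setminus A)$ in which each $e\in E(M)\setminus A$ is made adjacent exactly to the vertices of $C_M(e,A)\setminus\{e\}\subseteq A$. By construction $M=M(G,A,E(M)\setminus A)$ and $\abs{V(G)}=n$, and writing down $G$ takes $\cO(n^2)$ time.

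By Proposition~\ref{prop:matroidrank}, $\rw(G)=\bwd{M}-1\le 1$, so $G$ is distance-hereditary by Theorem~\ref{thm:rankwidth1}. I would then apply Theorem~\ref{thm:mainalg} to $G$ (componentwise, since the linear rank-width of a graph is the maximum over its connected components and optimal layouts of the components can be concatenated), obtaining in time $\cO(n^2\cdot\log_2 n)$ both $\lrw(G)$ and a linear layout $(x_1,\dots,x_n)$ of $G$ of width $\lrw(G)$. Again by Proposition~\ref{prop:matroidrank}, $\lbwd{M}=\lrw(G)+1$, and $\lambda_M(X)=\cutrk_G(X)+1$ for every $X\subseteq V(G)=E(M)$; applying the latter to each prefix $\{x_1,\dots,x_i\}$ shows that $(x_1,\dots,x_n)$, read as a linear layout of $M$, has width $\lrw(G)+1=\lbwd{M}$, so it is an optimal linear layout of $M$. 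The total running time is $\cO(n^2)$ for constructing $G$ from the oracle plus the $\cO(n^2\cdot\log_2 n)$ call to Theorem~\ref{thm:mainalg}, that is, $\cO(n^2\cdot\log_2 n)$.

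The only real obstacle is the reconstruction step: since $M$ is accessed only through the oracle we cannot simply read off a representation, and the key observation making the reconstruction affordable is that all fundamental circuits can be obtained with $\cO(n^2)$ oracle calls, which fits inside the target complexity. One should also keep an eye on the degenerate case $A=\emptyset$ (equivalently $r_M(E(M))=0$), where $G$ is edgeless on $n\ge 2$ vertices with $\lrw(G)=0$, so that $\lbwd{M}=1$, in accordance with $\lambda_M(X)=\cutrk_G(X)+1$.
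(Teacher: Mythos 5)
Your proposal is correct and follows essentially the same route as the paper: establish binarity via the excluded-minor characterizations, build the fundamental graph with $\cO(n^2)$ oracle calls, and invoke Theorem~\ref{thm:mainalg} together with Proposition~\ref{prop:matroidrank} to transfer both the width value and the layout back to the matroid. The extra care you take with the fundamental-circuit formulation and the degenerate cases is fine but does not change the argument.
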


	\begin{proof}
	Let $M$ be a matroid of branch-width at most $2$ and assume that an independent oracle of $M$ is given.
	By Theorems~\ref{thm:excludedbinary} and \ref{thm:excludedbw2}, every matroid of branch-width at most $2$ is binary.
	We first run a greedy algorithm to find a base $B$ of $M$~\cite[Section 1.8]{Oxley12} in time $\mathcal{O}(\abs{E(M)})$.
   After choosing one base $B$, for each $e\in B$ and $e'\in E(M)\setminus B$, we test whether $(B\setminus \{e\})\cup \{e'\}$ is again a base using the independent set oracle, and we create the fundamental graph $G$ with respect to $M$. It can be done in time $\mathcal{O}(\abs{E(M)}^2)$. 
	Since $M$ has branch-width at most $2$, by Proposition~\ref{prop:matroidrank}, the rank-width of $G$ is at most $1$. Using Theorem~\ref{thm:mainalg}, we can compute the linear rank-width of $G$ in time $\mathcal{O}(\abs{E(M)}^2\cdot \log_2 \abs{E(M)})$, which is the same as $\lbwd{M}-1$.
	Moreover, we can compute a linear layout witnessing $\lrw(G)$ in the same time, that corresponds to the linear layout of $M$ witnessing $\lbwd{M}$.
\end{proof}

\section{An upper bound on linear rank-width}\label{sec:upperbound}

As we promised, we prove the following lemma here. 
We remark that Bodlaender, Gilbert, Hafsteinsson and Kloks~\cite{BodlaenderGK91} proved a similar relation between tree-width and path-width.

\begin{lemma}\label{lem:lrwtrivialbound2}
Let $k$ be a positive integer and let $G$ be a graph of rank-width $k$. 
Then $\lrw(G)\le k \lfloor\log_{2}\abs{V(G)}\rfloor$.
\end{lemma}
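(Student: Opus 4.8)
The plan is to prove Lemma~\ref{lem:lrwtrivialbound2} by induction on $\abs{V(G)}$, mimicking the classical argument that relates path-width to tree-width. The key tool is that a rank-decomposition of width $k$, being a subcubic tree, admits a \emph{balanced edge}: there is an edge $e$ of the decomposition tree $T$ whose removal splits the leaves into two parts $(X_e, Y_e)$ with $\abs{X_e}, \abs{Y_e} \le \tfrac{2}{3}\abs{V(G)}$ (more precisely, one can find an edge or a node giving a split where each side has at most $\lceil \abs{V(G)}/2 \rceil$ vertices by a standard tree-separator argument; either bound suffices for the $\lfloor \log_2 \rfloor$ target with a bit of care, so I would use the $\le \lceil n/2\rceil$ version). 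Write $A := L^{-1}(X_e)$ and $B := L^{-1}(Y_e)$, so $(A,B)$ is a partition of $V(G)$ with $\cutrk_G(A) = \cutrk_G(B) \le k$ and $\abs{A},\abs{B} \le \lceil \abs{V(G)}/2\rceil$.

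First I would recall the submodularity and symmetry of $\cutrk_G$, together with the elementary bound $\cutrk_G(S) \le \cutrk_{G[A]}(S) + \cutrk_G(A)$ for $S \subseteq A$ — this holds because the adjacency matrix $A_G[S, V(G)\setminus S]$ decomposes into the block $A_G[S, A\setminus S]$ and the block $A_G[S, B]$, and the rank of the latter is at most $\rank(A_G[A,B]) = \cutrk_G(A) \le k$. Hence $\rank(A_G[S, V(G)\setminus S]) \le \cutrk_{G[A]}(S) + k$. Now the induced subgraphs $G[A]$ and $G[B]$ each have rank-width at most $k$ (rank-width does not increase under taking induced subgraphs, since an induced subgraph is a vertex-minor), and each has at most $\lceil \abs{V(G)}/2\rceil$ vertices, so by induction they admit linear layouts $L_A$ and $L_B$ of width at most $k\lfloor \log_2 \lceil \abs{V(G)}/2\rceil \rfloor \le k(\lfloor \log_2 \abs{V(G)}\rfloor - 1)$.

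Next I would take $L := L_A \oplus L_B$ as the linear layout of $G$ (reversing $L_B$ if convenient, though it is not needed). Every prefix of $L$ is either a prefix $S$ of $L_A$, in which case $\cutrk_G(S) \le \cutrk_{G[A]}(S) + k \le k(\lfloor \log_2\abs{V(G)}\rfloor - 1) + k = k\lfloor \log_2 \abs{V(G)}\rfloor$; or it is $A$ together with a prefix $S'$ of $L_B$, in which case by symmetry $\cutrk_G(A \cup S') = \cutrk_G(B \setminus S') = \cutrk_G((B\setminus S'))$ and the same bound applies with the roles of $A$ and $B$ swapped, using $\cutrk_G(B \setminus S') \le \cutrk_{G[B]}(B\setminus S') + k$. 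This gives $\lrw(G) \le k\lfloor \log_2 \abs{V(G)}\rfloor$. For the base cases: if $\abs{V(G)} \le 1$ then $\lrw(G) = 0$ and the bound is vacuous; if $\abs{V(G)} = 2$ then $\lrw(G) \le 1 = k\lfloor \log_2 2\rfloor$ since $k\ge 1$, and more generally one should keep the induction going down to some small base where the balanced-split argument still applies (for $\abs{V(G)} = 2, 3$ a split with both sides of size $1$, hence $\cutrk$ at most $1$, handles it directly).

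The main obstacle is getting the arithmetic on the logarithm to close cleanly with the balanced separator: a $\tfrac23$-balanced edge only gives $\lfloor \log_{3/2} n\rfloor$-type depth, which is too weak, so I would need the sharper statement that a subcubic tree on $n\ge 2$ leaves has an edge or internal node inducing a partition with each side of size at most $\lceil n/2 \rceil$ — when the tree has no such edge one uses the node version, and to stay within the linear-layout framework (which needs a partition into two parts, not three) one has to argue that the three branches at that node can be merged into two of size $\le \lceil n/2\rceil$ each, or simply accept a partition into two parts of sizes $\le \lceil 2n/3\rceil$ and redo the log computation with base $3/2$ — but that worsens the constant. The cleanest route, which I expect the authors take, is to first process the tree into one admitting a $\le\lceil n/2\rceil$ edge-split (e.g.\ by noting any subcubic tree on $\ge 2$ leaves does have such an edge), so the induction halves the vertex count at each step and the depth is exactly $\lfloor \log_2 n\rfloor$. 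Verifying that this balanced-edge claim holds for subcubic trees (including the degenerate cases with few leaves) is the one genuinely fiddly point; everything else is the routine submodularity bookkeeping sketched above.
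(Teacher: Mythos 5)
There is a genuine gap here --- in fact two. First, the balanced-edge claim you ultimately lean on is false: a subcubic tree need not have an edge splitting its leaves into two parts of size at most $\lceil n/2\rceil$. Take a central degree-$3$ vertex with three branches of two leaves each ($n=6$): every edge splits the leaves as $1$ versus $5$ or $2$ versus $4$, and $4>\lceil 6/2\rceil=3$. Your fallback of merging two of the three branches at a centroid node does not help, since two branches of size $2$ merge into a part of size $4$, and the base-$3/2$ fallback proves a strictly weaker statement than the lemma. Second, and more fundamentally, even granting a perfect $\lceil n/2\rceil$-balanced bipartition $(A,B)$ with $\cutrk_G(A)\le k$, the recursion $\lrw(G)\le\max(\lrw(G[A]),\lrw(G[B]))+k$ cannot reach $k\lfloor\log_2 n\rfloor$. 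The inequality you invoke, $\lfloor\log_2\lceil n/2\rceil\rfloor\le\lfloor\log_2 n\rfloor-1$, already fails at $n=3$ and $n=7$; concretely, for $n=7$ every bipartition has a part of size at least $4$, and $\lfloor\log_2 4\rfloor=\lfloor\log_2 7\rfloor=2$, so the induction returns $3k$ where the lemma demands $2k$. No choice of separator fixes this: a two-part divide-and-conquer pays $+k$ per level and needs each part strictly below the next power of two, which is arithmetically impossible for $n=7$.

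The paper's proof avoids recursing on induced subgraphs altogether. It roots the rank-decomposition, orients each internal node $t$ so that $\ell(t)$ is the child subtree with at least as many leaves as its sibling $r(t)$, and takes the depth-first layout that always lists $\ell(t)$ before $r(t)$. Every prefix $S_w$ of this layout is then a disjoint union $Q_1\cup\dots\cup Q_m$ of leaf-sets of subtrees $\ell(q_i)$ hanging off the root path of $w$; each $Q_i$ is one side of a tree edge, so $\rank(A_G[Q_i,V(G)\setminus S_w])\le k$ and $\cutrk_G(S_w)\le km$ by subadditivity of rank over row blocks, while the heaviness condition forces $\abs{Q_i}\ge\abs{Q_1}+\dots+\abs{Q_{i-1}}+1\ge 2^{i-1}$, hence $n\ge 2^m$ and $m\le\lfloor\log_2 n\rfloor$. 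In effect the paper recurses along all branches of a root-to-leaf path simultaneously rather than on a single bipartition, which is what makes the floor bound attainable.
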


\begin{proof}
Since $k$ is a positive integer, we have $\abs{V(G)}\ge 2$.
Let $(T,L)$ be a rank-decomposition of $G$ having width $k$.
For convenience, we choose an edge $e$ of $T$ and subdivide it with introducing a new vertex $x$, and regard $x$ as the root of $T$. 
For each internal vertex $t$ of $T$ with two subtrees $T_1$ and $T_2$ of $T\setminus t$ not containing $x$, 
let $\ell(t):=T_1$ and $r(t):=T_2$ if
the number of leaves of $T$ in $T_1$ is  at least 
the number of leaves of $T$ in $T_2$.
Let $S$ be a linear layout of $G$ satisfying that
\begin{itemize}
\item for each $v_1, v_2\in V(G)$ with the first common ancestor $w$ of $v_1$ and $v_2$ in $T$, $L(v_1)<_S L(v_2)$ if $L(v_1)\in V(\ell(w))$.
\end{itemize}
We can construct such a linear layout inductively. 

We show that $S$ has width at most $k \lfloor\log_{2}\abs{V(G)}\rfloor$.
Let $w$ be a vertex of $G$ that is not the first vertex of $S$ and let $S_w:=\{v:v<_{S} w\}$.
Let $P_w$ be the path from $L(w)$ to the root $x$ in $T$.
Note that for each $t\in V(P_w)\setminus \{L(w)\}$ and the subtree $T'$ of $T\setminus t$ not containing $x$ and $L(w)$, 
\begin{itemize}
\item if $r(t)=T'$, then all leaves of $T$ in $T'$ are not contained in $S_w$,  and 
\item if $\ell(t)=T'$, then all leaves of $T$ in $T'$ are contained in $S_w$.
\end{itemize}
Let $Q$ be the set of all vertices $t$ in $P_w$ except $w$ such that 
the subtree $\ell(t)$ does not contain $x$ and $L(w)$.

Let $q_1, q_2, \ldots, q_m$ be the sequence of all vertices in $Q$ such that for each $1\le j\le m-1$, $q_j$ is a descendant of $q_{j+1}$ in $T$, and
let $Q_i$ be the set of all leaves of $T$ contained in $\ell(q_i)$.
Clearly, $S_w=Q_1\cup Q_2\cup \cdots \cup Q_m$ and $V(G)\setminus S_w\neq \emptyset$.
Therefore, we have
  \begin{align*}
  \abs{V(G)}&= \abs{Q_1} + \cdots + \abs{Q_m} +\abs{V(G)\setminus S_w}  \\
  &\ge 1+2+4+\cdots 2^{m-1}+1 \\
  &=2^m. 
    \end{align*} 
Thus, $m\le \lfloor\log_2 \abs{V(G)}\rfloor$.

  Note that for each $1\le j\le m$, $\rank(A_G[Q_i,V(G)\setminus S_w])\le k$.
  Therefore, we have that 
  \[\cutrk_G(S_w)=\rank(A_G[(Q_1\cup \cdots \cup Q_m, V(G)\setminus S_w)])\le km\le  k\lfloor\log_2 \abs{V(G)}\rfloor.\]
Since $w$ was arbitrarily chosen, it implies that $\lrw(G)\le k\lfloor\log_2 \abs{V(G)}\rfloor$.
\end{proof}

\section{Concluding remarks}

We have provided a characterization of the linear rank-width of distance-hereditary graphs in terms of their canonical decompositions, and use this characterization to derive a polynomial-time algorithm to compute their linear
rank-width. An easy consequence of this is also a polynomial-time algorithm for computing the path-width of matroids of branch-width at most $2$, which was not adressed in the past.  

 Since distance-hereditary graphs are exactly graphs of rank-width at most $1$, natural extensions
are to consider graphs of bounded rank-width.
\begin{itemize}
\item Can we compute in polynomial time the linear rank-width of graphs of rank-width $\leq k$, for fixed $k\ge 2$?
\end{itemize}


In the second part of this work~\cite{AdlerKK152}, we will discuss structural properties of distance-hereditary graphs related to linear rank-width. 
Note that Jeong, Kwon, and Oum~\cite{JeongKO13} provided a lower bound on the size of the vertex-minor obstruction set for graphs with bounded linear rank-width, 
by providing a set of pairwise locally non-equivalent vertex-minor obstructions for graphs of linear rank-width at most $k$ for each $k$.
Their graphs are indeed distance-hereditary graphs, and
we will give a more general way to generate all distance-hereditary vertex-minor obstructions using the characterization given in this paper. 
Also, we prove that for a fixed tree $T$, every distance-hereditary graph of sufficiently large linear rank-width contains $T$ as a vertex-minor.
\section*{Acknowledgment}
The authors would like to thank Sang-il Oum for pointing out that the computation of matroid path-width can be obtained as a corollary of our main result.

\end{document}